\newcommand{\newsection}[1]{\setcounter{equation}{0} \section{#1}}
\newcommand{\bea}{\begin{eqnarray}}
	\newcommand{\eea}{\end{eqnarray}}
\newcommand{\clb}{\mathcal{B}}
\newcommand{\cld}{\mathcal{D}}
\newcommand{\cle}{\mathcal{E}}
\newcommand{\clf}{\mathcal{F}}
\newcommand{\clg}{\mathcal{G}}
\newcommand{\clh}{\mathcal{H}}
\newcommand{\clk}{\mathcal{K}}
\newcommand{\cll}{\mathcal{L}}
\newcommand{\clm}{\mathcal{M}}
\newcommand{\cln}{\mathcal{N}}
\newcommand{\clp}{\mathcal{P}}
\newcommand{\clq}{\mathcal{Q}}
\newcommand{\clr}{\mathcal{R}}
\newcommand{\cls}{\mathcal{S}}
\newcommand{\clw}{\mathcal{W}}
\newcommand{\D}{\mathbb{D}}
\newcommand{\bS}{\mathbb{S}}
\newcommand{\raro}{\rightarrow}
\def\textmatrix#1&#2\\#3&#4\\{\bigl({#1 \atop #3}\ {#2 \atop #4}\bigr)}
\def\dispmatrix#1&#2\\#3&#4\\{\left({#1 \atop #3}\ {#2 \atop #4}\right)}
\newcommand{\be}{\begin{equation}}
	\newcommand{\ee}{\end{equation}}
\newcommand{\ben}{\begin{eqnarray*}}
	\newcommand{\een}{\end{eqnarray*}}
\newcommand{\NI}{\noindent}
\newcommand{\bi}{\begin{itemize}}
	\newcommand{\ei}{\end{itemize}}
\newcommand{\Z}{\mathbb{Z}}
\newcommand\la{{\langle }}
\newcommand\ra{{\rangle}}
\theoremstyle{definition}
\theoremstyle{plain}
\newtheorem{thm}{Theorem}[section]
\newtheorem{cor}[thm]{Corollary}
\newtheorem{lem}[thm]{Lemma}
\newtheorem{prop}[thm]{Proposition}
\theoremstyle{definition}
\newtheorem{defn}[thm]{Definition}
\newtheorem{question}[thm]{Question}
\numberwithin{equation}{section}
\let\phi=\varphi
\begin{document}
	
\title[Inner and characteristic functions in polydiscs]{Inner and characteristic functions in polydiscs}

\author[Debnath]{Ramlal Debnath}
\address{Department of Mathematics, KTH Royal Institute of Technology, Stockholm,  Sweden}
\email{ramlal@kth.se, ramlaldebnath@gmail.com}

\author[Pradhan]{Deepak K. Pradhan}
\address{Department of Mathematics, Indian Institute of Technology Hyderabad, Kandi, Sangareddy, Telangana, 502284, India}
\email{deepak12pradhan@gmail.com, dkpradhan@math.iith.ac.in}
	
\author[Sarkar]{Jaydeb Sarkar}
\address{Indian Statistical Institute, Statistics and Mathematics Unit, 8th Mile, Mysore Road, Bangalore, 560059, India}
\email{jay@isibang.ac.in, jaydeb@gmail.com}

%\today
	
\subjclass[2020]{46J15, 47A15, 30H05, 47A56, 32A35, 30J05}

\keywords{Commuting tuples, contractions, commutators, inner functions, invariant subspaces, Hardy spaces, polydiscs}

\begin{abstract}
Characteristic functions of linear operators are analytic functions that serve as complete unitary invariants. Such functions, as long as they are built in a natural and canonical manner, provide representations of inner functions on a suitable domain and make significant contributions to the development of various theories in Hilbert function spaces. In this paper, we solve this problem in polydiscs. In particular, we present a concrete description of the characteristic functions of tuples of commuting pure contractions and, consequently, provide a description of inner functions on polydiscs.
\end{abstract}
	
\maketitle

\tableofcontents

\newsection{Introduction}\label{sec: intro}

Traditionally, there is a deep interconnection between bounded analytic functions and bounded linear operators. An example of this is representations of inner functions, representations of bounded linear operators, invariant subspace problem, characteristic functions, and unitary invariants; as a whole, this package is classically well-known as model theory \cite{NF, Nik}. We note that these links are well-established, fruitful, and extensively studied, although they remain far from fully understood. The notion of characteristic functions serves as one of the key connectors to this relationship and is central to the well-known theory of Sz.-Nagy and Foias \cite{NF}.

The concept of characteristic functions for commuting tuples of contractions has long been a challenging and unresolved issue. A concrete solution to this problem could provide representations of inner functions on $\D^n$, where $\D = \{z \in \mathbb{C}: |z| < 1\}$ and
\[
\D^n = \underbrace{\D \times \cdots \times \D}_{n-\text{times}},
\]
is the open unit polydisc in $\mathbb{C}^n$. Representations of inner functions on $\D^n$, $n > 1$, is another widely recognized difficult problem. This interdependence further underscores the complexity of the issue. The purpose of this paper is to address these aspects of multivariable operator theory and function theory on $\D^n$, $n > 1$. We present an explicit formula for characteristic functions that serve as complete unitary invariants for commuting tuples of contractions. The constructions in particular yield the first-ever representations of inner functions defined on $\D^n$, $n > 1$. 

A common approach to attempt Hilbert function theoretic problems on $\D^n$ is the well-known technique of fractional linear transformations (FLT), or transfer functions, which is further refined by the theory of Schur-Agler class functions (see Agler \cite{Ag 1, Ag 2}, and also see \cite{BLTT}). However, time has shown that the FLT, or the Schur-Agler class function approach, may not always be as effective as it appears in single operators and function theory on $\D$. A significant example of this limitation is the recent complete solution \cite{KD-S} to the Nevanlinna-Pick interpolation problem on $\D^n$, $n > 1$, which offers an entirely new perspective on functions and operators on $\D^n$, independent of Schur-Agler class techniques. In this paper, we also tackle the issue of inner functions and characteristic functions by presenting a new theory that differs from the conventional FLT framework.

\begin{question}\label{quest}
Here, one seeks solutions to the following three basic problems:
\begin{enumerate}
\item Identify the class of $n$-tuples that admit characteristic functions.
\item And then compute the characteristic functions for those tuples. 
\item Finally, use (1) and (2) to represent inner functions on $\D^n$, $n \geq 1$. 
\end{enumerate}

\end{question}

Let us proceed to explain the solutions to these (and many more related) problems. Given Hilbert spaces $\cle$ and $\cle_*$, designate $\clb(\cle, \cle_*)$ (or simply $\clb(\cle)$ if $\cle_* = \cle$) as the space of all bounded linear operators from $\cle$ to $\cle_*$. Here, all the Hilbert spaces are separable and over $\mathbb{C}$. Designate $H^\infty_{\clb(\cle, \cle_*)}(\D^n)$ as the space of all bounded $\clb(\cle, \cle_*)$-valued analytic functions on $\D^n$. We refer to a function $\Theta\in H^{\infty}_{\clb(\cle,\cle_*)}(\D^n)$ as \textit{inner} if
\[
\Theta(z)^*\Theta(z)=I_{\cle},
\]
for almost every $z\in \mathbb{T}^n$ (where $\mathbb{T}^n$ denotes the distinguished boundary of $\D^n$). We also denote the $\cle_*$-valued Hardy space over $\D^n$ as $H^2_{\cle_*}(\D^n)$ and write the $n$-tuple of multiplication operators by the coordinate functions acting on $H^2_{\cle_*}(\D^n)$ as $(M_{z_1}, \ldots, M_{z_n})$ (see Section \ref{sec: basics} for more details). Throughout the paper, we follow the notation
\[
I_n = \{1, \ldots, n\}.
\]
We will primarily assume that $n \geq 1$, and it will be clear from the context if the $n$ does not include $1$. A closed subspace $\clq$ of $H^2_{\cle_*}(\D^n)$ is said to be a \textit{model space} or a \textit{quotient module} if $M_{z_i}^* \clq \subseteq \clq$ for all $i \in I_n$. For each $i \in I_n$, define the \textit{model operator}
\[
C_i = P_\clq M_{z_i}|_\clq \in \clb(\clq).
\]
Throughout the paper, given a closed subspace $\cls$ of a Hilbert space $\clh$, we denote the orthogonal projection of $\clh$ onto $\cls$ by $P_\cls$. Recall that $H^2_{\cle_*}(\D^n)$ is a reproducing kernel Hilbert space corresponding to the kernel function
\[
\D^n\times \D^n \ni  (z, w) \mapsto  \mathbb{S}_n(z, w)I_{\cle_*},
\]
where $\mathbb{S}_n$ is the \textit{Szeg\"{o} kernel} of $\D^n$. Recall that
\[
\bS_n(z, w) = \prod_{i \in I_n} (1 - z_i \bar{w}_i)^{-1},
\]
for all $z, w \in \D^n$. Throughout the paper, the letters $z = (z_1, \ldots, z_n)$ and $w = (w_1, \ldots, w_n)$ will represent the coordinate functions on $\mathbb{C}^n$.

Representations of quotient modules, when $n > 1$, are by far more complicated. However, function theory directly connects to the construction of certain quotient modules in a natural way: Given an inner function $\Theta\in H^{\infty}_{\clb(\cle,\cle_*)}(\D^n)$, consider the quotient module
\[
\clq_{\Theta}:=H^2_{\cle_*}(\D^n)/ \Theta H^2_{\cle}(\D^n).
\]
We refer to these inner function-based quotient modules as \textit{Beurling quotient modules} (see Definition \ref{def: beurling QM}). Note that when $n=1$, all quotient modules are Beurling quotient modules— a celebrated result attributed to Beurling, Lax, and Halmos \cite[Chap. V, Theorem 3.3]{NF}. The inner function $\Theta$ can be regarded as the characteristic function of the tuple $C:= (C_1, \ldots, C_n)$. In other words, the $n$-tuple $C$ on $\clq_\Theta$ admits characteristic functions (namely, $\Theta$ itself). It is now natural to identify $n$-tuples that admit characteristic functions (in terms of inner functions as mentioned above). This is the problem raised in part 1 of Question \ref{quest}. The Szeg\"{o} tuples are the half way answer to this question: Given a Hilbert space $\clh$, we let
\[
\clb_c^n(\clh) = \{T = (T_1, \ldots, T_n): T_i T_j = T_j T_i, \text{ and } \|T_i\| \leq 1 \text{ for all } i,j \in I_n\},
\]
the set of all $n$-tuples of commuting contractions on $\clh$. We set
\[
\mathbb{S}^{-1}_n(T, T^*) := \sum_{k \in \Z^n_+, |k| \leq n} (-1)^{|k|}T^k T^{*k},
\]
where $T^k = T_1^{k_1} \cdots T_n^{k_n}$, $T^{*k} = T_1^{*k_1} \cdots T_n^{*k_n}$, and $|k| = \sum_{i=1}^{n} k_i$ for all $k = (k_1, \ldots, k_n) \in \Z_+^n$.

\begin{defn}\label{def: Szego tuples}
$T \in \clb_c^n(\clh)$ is said to be a Szeg\"{o} tuple if $\mathbb{S}^{-1}_n(T, T^*) \geq 0$ and $T_i \in C_{\cdot 0}$ for all $i\in I_n$. The set of all Szeg\"{o} $n$-tuples acting on $\clh$ will be denoted by $\bS_n(\clh)$, that is,
\[
\bS_n(\clh) = \{T \in \clb_c^n(\clh): \mathbb{S}^{-1}_n(T, T^*) \geq 0 \text{ and } T_i \in C_{\cdot 0}, i=1, \ldots, n\}.
\]
\end{defn}

Recall that a contraction $X\in \clb(\clh)$ is in $C_{\cdot 0}$ if $SOT-\lim_{m\raro \infty} X^{*m} = 0$ (in short, $X \in C_{\cdot 0}$). The above positivity conditions of tuples align with the concept of the Szeg\"{o} kernel on the polydisc (refer to Section \ref{sec: basics} for further details).

The Szeg\"{o} kernel and Szeg\"{o} tuples are interconnected in the following manner: Let $T \in \bS_n(\clh)$. Then $T$ is jointly unitarily equivalent to the tuple of model operators $C = (C_1, \ldots, C_n)$ on some quotient space $\clq$ (see Theorem \ref{Dilation Thm} and \eqref{eqn: T and CT}). Two tuples $X = (X_1, \ldots, X_n)$ on $\clk$ and $Y = (Y_1, \ldots, Y_n)$ on $\cll$ are said to be \textit{jointly unitarily equivalent} if there exists a unitary operator $U: \clk \raro \cll$ such that
\[
U X_i U^* = Y_i \qquad (i \in I_n).
\]
We denote this by $X \cong Y$. Studying Szeg\"{o} tuples is thus equivalent to studying tuples of model operators. This result is due to M\"{u}ller and Vasilescu \cite{MV} (also see Timotin \cite{Timotin}). Therefore, if a tuple admits a characteristic function, then it is necessarily a Szeg\"{o} tuple. It is well-known that this is not a sufficient condition, and the following is the missing information that is needed for a Szeg\"{o} tuple to admit a characteristic function: Let $T \in \bS_n(\clh)$. Then $T$ admits a characteristic function if and only if
\[
(I - T_i^* T_i) (I - T_j^* T_j) = 0,
\]
for all $i\neq j$. This was proved in \cite[Theorem 3.2]{Beurling quotient module}. In this paper, we include one more condition that is more effective for the present consideration (See Theorem \ref{Beurling_qm}):

\begin{thm}
Let $T \in \bS_n(\clh)$. Then $T$ admits a characteristic function if and only if $T_i|_{\cld_{T_j}}: \cld_{T_j} \raro \cld_{T_j}$ is an isometry for all $i\neq j$.
\end{thm}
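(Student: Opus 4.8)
The plan is to derive the statement from the characterization recorded just above: for $T \in \bS_n(\clh)$, $T$ admits a characteristic function if and only if $(I-T_i^*T_i)(I-T_j^*T_j)=0$ for all $i\ne j$ \cite{Beurling quotient module}. Write $D_{T_j}=(I-T_j^*T_j)^{1/2}$ for the defect operator, so that $\cld_{T_j}=\overline{\mathrm{ran}\,D_{T_j}}$ and, as $D_{T_j}$ is self-adjoint, $\ker D_{T_j}=\cld_{T_j}^{\perp}$. It then suffices to establish the operator-theoretic equivalence
\[
(I-T_i^*T_i)(I-T_j^*T_j)=0\ \ (\forall\, i\ne j)\quad\Longleftrightarrow\quad T_i|_{\cld_{T_j}}:\cld_{T_j}\raro\cld_{T_j}\ \text{is an isometry}\ \ (\forall\, i\ne j).
\]

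\emph{Defect-space bookkeeping.} The left-hand condition reads $D_{T_i}^2D_{T_j}^2=0$. From $\|D_{T_i}D_{T_j}^2x\|^2=\langle D_{T_i}^2D_{T_j}^2x,\,D_{T_j}^2x\rangle=0$ one gets $D_{T_i}D_{T_j}^2=0$, so $D_{T_i}D_{T_j}$ vanishes on $\mathrm{ran}\,D_{T_j}$ and trivially on $\ker D_{T_j}$, hence $D_{T_i}D_{T_j}=0$; the converse implication is immediate. Therefore the left-hand condition amounts to $\cld_{T_i}\perp\cld_{T_j}$, i.e. $\cld_{T_j}\subseteq\ker D_{T_i}$, for all $i\ne j$. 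Since $\langle D_{T_i}^2x,x\rangle=\|x\|^2-\|T_ix\|^2$, the inclusion $\cld_{T_j}\subseteq\ker D_{T_i}$ says exactly that $\|T_ix\|=\|x\|$ for every $x\in\cld_{T_j}$. So the left-hand condition is precisely the assertion that, for $i\ne j$, the operator $T_i$ is norm-preserving on $\cld_{T_j}$; what remains is to see that this upgrades to ``$T_i|_{\cld_{T_j}}$ is an isometry \emph{into} $\cld_{T_j}$'', i.e. that $\cld_{T_j}$ is automatically $T_i$-invariant.

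\emph{Automatic invariance.} This is the one step that is not a formal manipulation, and it is where the Szeg\"o structure enters. By the M\"uller--Vasilescu model (Theorem \ref{Dilation Thm}) I would replace $T$ by the unitarily equivalent tuple $C=(C_1,\ldots,C_n)$, $C_i=P_\clq M_{z_i}|_\clq$, on a quotient module $\clq\subseteq H^2_{\cle_*}(\D^n)$ (so $M_{z_i}^*\clq\subseteq\clq$); this carries $\cld_{T_i}$ onto $\cld_{C_i}$, so it is enough to prove $C_i\cld_{C_j}\subseteq\cld_{C_j}$ for $i\ne j$. Since each $M_{z_i}$ is an isometry on $H^2_{\cle_*}(\D^n)$ and $C_i^*=M_{z_i}^*|_\clq$, a direct computation gives $D_{C_i}^2f=M_{z_i}^*P_{\clq^{\perp}}M_{z_i}f$ for $f\in\clq$, whence $\langle D_{C_i}^2f,f\rangle=\|P_{\clq^{\perp}}M_{z_i}f\|^2$, so $\ker D_{C_i}=\{f\in\clq:\ M_{z_i}f\in\clq\}$ and $\cld_{C_i}=\clq\ominus\ker D_{C_i}$. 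Now the coordinate multiplication operators on $H^2_{\cle_*}(\D^n)$ are doubly commuting, $M_{z_i}^*M_{z_j}=M_{z_j}M_{z_i}^*$ for $i\ne j$; hence if $f\in\clq$ has $M_{z_j}f\in\clq$, then, using also $M_{z_i}^*\clq\subseteq\clq$,
\[
M_{z_j}(C_i^*f)=M_{z_j}(M_{z_i}^*f)=M_{z_i}^*(M_{z_j}f)\in\clq,
\]
that is, $C_i^*f\in\ker D_{C_j}$. Thus $\ker D_{C_j}$ is $C_i^*$-invariant, so $\cld_{C_j}$ is $C_i$-invariant, for all $i\ne j$. (Note this holds for every $T\in\bS_n(\clh)$, regardless of whether $T$ admits a characteristic function.)

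\emph{Conclusion.} If $T$ admits a characteristic function, then $(I-T_i^*T_i)(I-T_j^*T_j)=0$ for all $i\ne j$ by \cite{Beurling quotient module}; the first step then gives $\cld_{T_j}\subseteq\ker D_{T_i}$, so $\|T_ix\|=\|x\|$ on $\cld_{T_j}$, and the second step gives $T_i\cld_{T_j}\subseteq\cld_{T_j}$, so $T_i|_{\cld_{T_j}}:\cld_{T_j}\raro\cld_{T_j}$ is an isometry. Conversely, if $T_i|_{\cld_{T_j}}:\cld_{T_j}\raro\cld_{T_j}$ is an isometry for all $i\ne j$, then $\|T_ix\|=\|x\|$ for $x\in\cld_{T_j}$ forces $\cld_{T_j}\subseteq\ker D_{T_i}=\cld_{T_i}^{\perp}$, whence $D_{T_i}D_{T_j}=0$ and $(I-T_i^*T_i)(I-T_j^*T_j)=D_{T_i}^2D_{T_j}^2=0$ for all $i\ne j$; by \cite{Beurling quotient module} again, $T$ admits a characteristic function. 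The only genuinely nontrivial ingredient is the automatic $T_i$-invariance of $\cld_{T_j}$, which is precisely why the concrete polydisc model -- and not merely the abstract relations $T_iT_j=T_jT_i$ and $\|T_i\|\le1$ -- has to be invoked.
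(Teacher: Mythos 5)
Your proof is correct and follows the same overall strategy as the paper: both reduce to the known equivalence with $(I-T_i^*T_i)(I-T_j^*T_j)=0$ from \cite{Beurling quotient module}, both identify that condition with $D_{T_i}D_{T_j}=0$, i.e.\ with $T_i$ being norm-preserving on $\cld_{T_j}$, and both obtain the remaining invariance $T_i\cld_{T_j}\subseteq\cld_{T_j}$ by passing to the M\"{u}ller--Vasilescu model. The one place where you genuinely diverge is in how that invariance is proved. The paper (Theorem \ref{More dilation}(1)) factors $C_jD_{C_i}^2=(P_{\clq}M_{z_i}^*P_{\cls})M_{z_j}P_{\cls}M_{z_i}|_{\clq}$, using the reducing property of Lemma \ref{lemma: Q redu}, and invokes Douglas' range inclusion theorem against $\cld_{C_i}=\overline{\text{ran}}(P_{\clq}M_{z_i}^*P_{\cls})$. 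You instead identify $\ker D_{C_j}=\{f\in\clq: M_{z_j}f\in\clq\}$ and show this kernel is $C_i^*$-invariant directly from double commutativity, so that its orthocomplement $\cld_{C_j}$ is $C_i$-invariant; this is a bit more elementary (no range-inclusion theorem, no reducing-subspace lemma) and makes transparent exactly where the concrete polydisc structure is used, while the paper's factorization has the side benefit of yielding the quantitative inequalities $C_jD_{C_i}^2C_j^*\leq D_{C_i}^2$ that are reused later. Your observation that the invariance holds for every Szeg\"{o} tuple, independently of the Beurling condition, matches the paper's Theorem \ref{More dilation}.
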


Recall the standard notation that for a contraction $X \in \clb(\clh)$, one denotes by
\begin{equation}\label{eqn: defect}
D_X = (I_{\clh}-X^*X)^{1/2},
\end{equation}
the defect operator and $\cld_X = \overline{ran}D_X$ the corresponding defect space. Since $X^*$ is also a contraction, we also have $D_{X^*} = (I_{\clh}-X X^*)^{1/2}$ and $\cld_{X^*} = \overline{ran}D_{X^*}$ \cite{NF}.

In the context of quotient modules, the above theorem can be stated as follows (see Theorem \ref{pure_isometry}): Let $\clq \subseteq H^2_{\cle_*}(\D^n)$ be a quotient module. Then $\clq$ is a Beurling quotient module if and only if
\[
z_i\cld_{C_j}\subseteq \cld_{C_j},
\]
for all $i\neq j$. In particular, the restriction operator $M_{z_i}|_{\cld_{C_j}}$ represents a shift on $\cld_{C_j}$ for all $i \neq j$.

These results provide a conclusive answer to the problem posed in part 1 of Question \ref{quest}. In view of this, we can now identify the class on which to focus when computing the characteristic functions—namely, the Beurling tuples. More formally:

\begin{defn}\label{def: Beurl tuples}
An $n$-tuple $T \in \mathbb{S}_n(\clh)$ is said to be a Beurling tuple if $D_{T_i}D_{T_j}=0$ for all $i \neq j$. We write $\mathbb{S}_n^{B}(\clh)$ to denote the set of such $n$-tuples (the superscript $B$ refers to Beurling). Therefore:
\[
\mathbb{S}_n^{B}(\clh) = \{T\in  \mathbb{S}_n(\clh): D_{T_i}D_{T_j}=0 \text{ for all } i\neq j    \}.
\]
\end{defn}

Having established that the tuples in $\mathbb{S}_n^{B}(\clh)$ are the ones to look for characteristic functions, we proceed by highlighting the representations of the defect operators of these tuples. There are two kinds of defect operators to deal with. Defect operators of the first kind are associated with the Szeg\"{o} property of Szeg\"{o} tuples: Given $T$ from $\mathbb{S}_n(\clh)$, we define the defect operator of the first kind as
\[
D_{T^*} = \Big(\mathbb{S}_n^{-1}(T, T^*)\Big)^{\frac{1}{2}}.
\]
The defect space of the first kind of $T \in \mathbb{S}_n(\clh)$ is defined as
\[
\cld_{T^*} = \overline{\text{ran}}D_{T^*}.
\]
The construction of this defect operator and space is widely recognized and inherently linked to the canonical dilation of $T$ \cite{MV} (also see Theorem \ref{Dilation Thm}). The next step is to build the second defect operator and the defect space that goes along with it. Extensive work is required to build the defect operator of the second kind. Notably, during the construction of the defect operator of the second kind, new concepts and ideas are introduced. These are of independent interest and shed light on many classical problems related to lifting in the context of several variables. We will elaborate on this topic in subsequent sections; however, we presently emphasize only the key aspects of the construction of the other defect operator.

First, we pull out a standard notation. For each $X\in \clb(\clh)$, we define a completely positive map $\Delta_X:\clb{(\clh)}\raro \clb(\clh)$ by
\[
\Delta_X(A) = XAX^*,
\]
for all $A\in \clb(\clh)$. To construct the defect operator of the second kind, we need to introduce two new notions. The first notion relates to the commutators of commuting tuples of operators. Recall that the commutator of $X$ and $Y$ from $\clb(\clh)$ is defined by $[X, Y] = XY - YX$. Given $T \in \clb^n_c(\clh)$ and $i\neq j$ from $I_n$, $n > 2$, the \textit{$(i,j)$-th joint commutator} of $T$ is defined by (see Definition \ref{def: joint comm})
\[
\delta_{ij}(T) = \prod_{k \in I_n \setminus \{i,j\}}^{n}(I-\Delta_{T_k})([T_j,T_i^*]).
\]
If $n=2$, then we set $\delta_{12}(T) := [T_2, T_1^*]$. In subsequent sections, we will develop many features of joint commutators. The second component needed in the construction of the defect operator of the second kind is truncated defect operators. For a detailed elaboration, we refer the reader to Section \ref{sec: trunc defect}, but for the present purpose, for a given commuting tuple of contractions $T \in \clb^n_c(\clh)$ and for $j \in I_n$ the \textit{truncated defect operator} is defined as (see \eqref{eqn: D jT})
\[
D^2_{j,T} = \prod_{k \in I_n \setminus \{j\}} (I - \Delta_{T_k})(D^2_{T_j}).
\]
An important point is that if $T$ is a Beurling tuple (that is, $T \in \mathbb{S}_n^{B}(\clh)$), truncated defect operators are positive. Furthermore, in this case, we define an operator $\bm{D}_T^2 \in \clb(\clh^n)$ by (see Definition \ref{defect})
\[
\bm{D}_T^2:=\begin{bmatrix}
D_{1, T}^2 & \delta_{12}(T) &\cdots & \delta_{1n}(T)
\\
\delta_{21}(T) & D_{2, T}^2&\cdots & \delta_{2n}(T)
\\
\vdots&\vdots&\ddots&\vdots\\
\delta_{n1}(T)  & \delta_{n2}(T) &\cdots&D_{n, T}^2
\end{bmatrix},
\]
where $\clh^n$ represents the direct sum of $n$ copies of $\clh$. Although $\bm{D}_T^2$ makes sense for Szeg\"{o} tuples, for Beurling tuples, this carries the much-needed property for our context. Indeed, in Proposition \ref{positive_defect}, we prove, for each $T \in \mathbb{S}_n^{B}(\clh)$, that
\[
\bm{D}_T^2 \geq 0.
\]
In other words, the superscript $2$ in $\bm{D}_T^2$ for Beurling tuples $T$ makes perfect sense. In view of this, for $T \in \mathbb{S}_n^{B}(\clh)$, we write the non-negative square root of $\bm{D}_T^2$ on $\clh^n$ as
\[
\bm{D}_T := (\bm{D}_T^2)^{\frac{1}{2}}.
\]
This is our defect operator of the second kind, which will be referred to as the \textit{joint defect operator} of $T$. The \textit{joint defect space} $\bm{\cld}_T$ of $T$ is then defined by
\[
\bm{\cld}_T := \overline{\text{ran}}\bm{D}_T \subseteq \clh^n.
\]
We remark a very curious fact that $\bm{\cld}_T$ can be represented as a closed subspace of $\clh$ due to certain orthogonality properties such as $D_{i,T} D_{j,T} = 0$ for all $i \neq j$. In fact, in \eqref{eqn: D D H}, we prove that
\[
\bm{\cld}_T \subseteq \bigoplus_{i \in I_n} \cld_{i,T} \subseteq \clh,
\]
where $\cld_{j,T} = \overline{\text{ran}}D_{j,T}$ for all $j \in I_n$. Another fascinating feature that significantly contributes to the formulation of characteristic functions is the connection between joint defect operators and wandering subspaces: Given a Beurling quotient module $\clq_{\Theta}:=H^2_{\cle_*}(\D^n)/ \Theta H^2_{\cle}(\D^n)$ for some inner function $\Theta\in H^{\infty}_{\clb(\cle,\cle_*)}(\D^n)$, the tuple of model operators $C = (C_1, \ldots,C_n)$ is a Beurling tuple. The wandering subspace of the \textit{Beurling submodule} $\cls_\Theta =  \Theta H^2_{\cle}(\D^n)$ is defined by
\[
\clw = \bigcap_{i\in I_n} (\cls_\Theta \ominus z_i \cls_\Theta).
\]
In Corollary \ref{unitary}, we prove that
\[
\text{dim} \clw = \dim \bm{\cld}_C.
\]
In fact, the unitary that implements the above identity is explicit and plays an important role in the structure of tuples of commuting contractions.

Finally, we are in a position to introduce characteristic functions of Beurling tuples (see Definition \ref{def: char funct}). We reiterate to the reader that the assumption that the tuples in the following are of Beurling type is not merely artificial; it is essential for a tuple to admit a characteristic function.

\begin{defn}
The characteristic function of a Beurling tuple $T \in \mathbb{S}_n^B(\clh)$ is as an operator-valued analytic function $\Theta_T : \D^n \raro \clb(\bm{\cld}_T, \cld_{T^*})$ defined by
\[
\Theta_{T}(w)\bm{D}_T \tilde{h} = D_{T^*}\prod_{k=1}^{n}(I_{\clh}-w_kT_k^*)^{-1}\sum_{j=1}^{n}(w_jI_{\clh}-T_j)\prod_{i \in I_n \setminus \{j\}} (I_{\clh}-w_iT_i^*)h_j,
\]
for all $w \in \D^n$ and $\tilde{h} = \begin{bmatrix}h_1 & \ldots & h_n \end{bmatrix}^t \in \clh^n$.
\end{defn}

This function is canonical and concrete, and subsequently, in Theorem \ref{thm: compl unit inv}, we prove that this is a complete unitary invariant. This solves the problem stated in part 2 of Question \ref{quest}. This also yields a concrete model representation of Beurling tuples. More specifically, given $T \in \mathbb{S}_n^B(\clh)$ and the characteristic function $\Theta_T$ of $T$ as described above, we can conclude that $T$ is jointly unitarily equivalent with the tuple $C = (C_1, \ldots, C_n)$ on the Beurling quotient module
\[
\clq_{\Theta_T} = H^2_{\cld_{T^*}}(\D^n)/ \Theta_T H^2_{\bm{\cld}_T}(\D^n).
\]

Finally, in Subsection \ref{sub sec: inner funct}, we use this tool to describe inner functions on $\D^n$. More specifically, if $\Theta \in H^\infty_{\clb(\cle,\cle_*)}(\D^n)$ is an inner function, then there exists a Beurling tuple $T \in \mathbb{S}_n^B(\clh)$, a Hilbert space $\clf$, and unitary operators $\sigma_*$ and $\sigma$ acting on some appropriate spaces so that (see Subsection \ref{sub sec: inner funct} for detailed constructions)
\[
\Theta(z) = \sigma_*^{*}\begin{bmatrix}
\Theta_T(z)&0\\0&I_{\clf}
\end{bmatrix}\sigma,
\]
for all $z \in \D^n$. This solves the remaining problem stated in part 3 of Question \ref{quest}, thereby resolving the full collection of questions about the characteristic functions of $n$-tuples of pure and commuting contractions and representations of inner functions on $\D^n$.

During the construction of characteristic functions of Beurling tuples, we develop numerous concepts, such as joint commutators, which are of independent interest and suggest new avenues for investigating many classical theories in several variables. In addition, we prove some results that are not directly related to the main goal of this paper but are still of independent interest, including the result that Beurling quotients are always infinite dimensional, thereby recovering and generalizing a classical result by Ahern and Clark (see Subsection \ref{subsect: Ahern and Clark}).

It is important to that the notion of characteristic functions for noncommuting tuples was initiated by Popescu in \cite{Pop}. However, our commutative approach is fundamentally different from Popescu’s methodology. His constructions and representations of characteristic functions for noncommuting tuples on noncommuting polyballs heavily rely on the presence of noncommutative Berezin transforms (maps that are essentially related to the canonical dilations of noncommutative tuples). See the definition and discussion preceding Theorem 6.3 of \cite{Pop}.

The paper is organized as follows. Section \ref{sec: basics} introduces all the necessary definitions and notations and recalls the classical characteristic functions on the disc. Section \ref{sec: Beurling QM} analyzes several properties of Szeg\"{o} tuples and also presents yet another classification of Beurling quotient modules over $\D^n$, $n > 1$. In Section \ref{sec: trunc defect}, we define truncated defect operators as a preliminary step toward introducing defect operators of the second kind, later referred to as joint defect operators. Section \ref{sec: trunc defect} also yields power series representations of truncated defect operators. Section \ref{sec: joint comm} introduces a new notion referred to as joint commutators. This section again examines the Beurling quotient modules. Building on concepts from the previous sections, this section demonstrates several structural properties of these quotient modules. The formal definition and some properties of joint defect operators appear in Section \ref{sec: defect op}. With these tools in hand, in Section \ref{sec: ch fn}, we introduce the characteristic functions of Beurling tuples and prove that they are complete unitary invariants. The final section, Section \ref{sec: classical}, presents that our characteristic function recovers the classical one when $n=1$. Here, we also prove that the representations of characteristic functions indeed lead to representations of inner functions. Additionally, we revisit and generalize a classical result by Ahern and Clark.

\section{Basic settings}\label{sec: basics}

The purpose of this section is to introduce basic notation and concepts and to collect key results from the literature. We begin by outlining the vector-valued Hardy space on $\D^n$. Once again, we recall that we primarily assume $n \geq 1$, and it will be clear from the context when $n=1$ is not included.

Let $\cle$ be a Hilbert space. The $\cle$-valued Hardy space over $\D^n$, denoted by $H^2_{\cle}(\D^n)$, is the Hilbert space of all $\cle$-valued analytic functions $f$ on $\D^n$ such that
\[
\|f\|:=\left(\sup_{0<r< 1}\int_{\mathbb{T}^n}\|f(rz)\|_\cle^2\: dm(z)\right)^{1/2}<\infty,
\]
where $dm$ denotes the normalized Lebesgue measure on $\mathbb{T}^n$ and $rz=(rz_1,\dots,rz_n)$, and $\mathbb{T}^n$ the distinguished boundary of $\D^n$. In the case of $\cle = \mathbb{C}$, we write $H^2(\D^n)$ in place of $H^2_{\mathbb{C}}(\D^n)$. It will be often useful to identify $H^2(\D^n)\otimes \cle$ with $H^2_{\cle}(\D^n)$ via the unitary map $U: H^2(\D^n)\otimes \cle \raro H^2_{\cle}(\D^n)$ defined by
\[
U (z^k \otimes \eta) = z^k \eta,
\]
for all $k \in \Z_+^n$ and $\eta \in \cle$, where $z^k = z_1^{k_1} \cdots z_n^{k_n}$. Recall that the $\clb(\cle)$-valued function
\[
\D^n\times \D^n \ni  (z, w) \mapsto  \mathbb{S}_n(z, w)I_{\cle},
\]
defines a reproducing kernel for $H^2_{\cle}(\D^n)$, where $\mathbb{S}_n$ is the Szeg\"{o} kernel of $\D^n$ given by
\[
\bS_n(z, w) = \prod_{i \in I_n} (1 - z_i \bar{w}_i)^{-1},
\]
for all $z, w \in \D^n$. Now we turn to operators. For each $i \in I_n$, define the multiplication operator $M_{z_i}$ on $H^2_{\cle}(\D^n)$ by
\[
(M_{z_i}f)(w): = w_i f(w),
\]
for all $f\in H^2_{\cle}(\D^n)$ and $w \in \D^n$. Then, $(M_{z_1},\dots, M_{z_n})$ is an $n$-tuple of doubly commuting isometries (this is also referred to as \textit{shift operators}). A closed subspace $\clq$ of $H^2_{\cle}(\D^n)$ is referred to as a \textit{backward shift-invariant subspace} or a \textit{quotient module} if
\[
M_{z_i}^*\clq\subseteq \clq,
\]
for all $i \in I_n$. We further say that $\clq^\perp$ is a \textit{submodule} of $H^2_{\cle}(\D^n)$. Note that a closed subspace $\cls$ of $H^2_{\cle}(\D^n)$ is a submodule of $H^2_{\cle}(\D^n)$ if and only if $z_i \cls \subseteq \cls$ for all $i \in I_n$.

This $n$-tuple $(M_{z_1},\dots, M_{z_n})$ on $H^2_{\cle}(\D^n)$ is also an example of Szeg\"{o} tuple. To recall the definition of Szeg\"{o} tuples, we bring back the Szeg\"{o} kernel of $\D^n$. We observe that $\mathbb{S}^{-1}_n(z, w)$ is a polynomial:
\[
\mathbb{S}^{-1}_n(z, w) = \sum_{k \in \{0,1\}^n} (-1)^{|k|} z^k \bar{w}^{k},
\]
where $|k| = \sum_{i=1}^{n} k_i$ and $\bar{w}^k = \bar{w}_1^{k_1} \cdots \bar{w}_n^{k_n}$ for all $k\in \Z_+^n$. In view of the polynomial $\mathbb{S}^{-1}_n(z, w)$, for each $T \in \clb_c^n(\clh)$, we set (also see the identity preceding Definition \ref{def: Szego tuples})
\begin{equation}\label{eqn: Szego inverse}
\mathbb{S}^{-1}_n(T, T^*) := \sum_{k \in \{0,1\}^n}(-1)^{|k|}T^k T^{*k}.
\end{equation}

As mentioned in the introduction, in the classical setting of the $n=1$ case, the concept of characteristic functions connects the structure of operators and analytic function theory. The existence of isometric dilations enriches the link. However, due to the complex structure of tuples of commuting isometries, the existence of isometric dilations in the polydisc setting remains a challenging problem (however, see \cite{Isometric dilation, Vinnikov}). On the other hand, tuples that admit dilations to tuples of shifts are well understood and widely recognized. Before we recall the dilation result, we set up a few notations: Let $T\in \mathbb{S}_n(\clh)$. The \textit{defect operator of the first kind}, or simply the \textit{defect operator} $D_{T^*}$ of $T$, is defined by
\[
{D}_{T^*} = (\mathbb{S}^{-1}_n(T,T^*))^{\frac{1}{2}},
\]
and, the \textit{defect space} is defined by
\[
{\cld}_{T^*} = \overline{ran}{{D}_{T^*}}.
\]
Define a bounded linear operator $\Pi_T: \clh \raro H^2_{{\cld}_{T^*}}(\D^n)$ by
\[
(\Pi_Th)(w)={D}_{T^*} \prod_{i=1}^{n}(I_{\clh}-w_iT^*)^{-1}h,
\]
for all $w\in\D^n$ and $h\in \clh$. The following isometric dilation result for Szeg\"{o} tuples aligns with the classical Sz.-Nagy and Foias isometric dilation theorem. This result is due to M\"{u}ller and Vasilescu \cite{MV} (also see \cite{Timotin}).

\begin{thm}\label{Dilation Thm}
Let $T\in \mathbb{S}_n(\clh)$. Then $\Pi_T: \clh \raro H^2_{{\cld}_{T^*}}(\D^n)$ defines an isometry and satisfies
\[
\Pi_T T_i^* = M_{z_i}^*\Pi_T,
\]
for $i \in I_n$. Moreover, we have the following identity:
\[
H^2_{{\cld}_{T^*}}(\D^n)=\bigvee_{{k}\in \mathbb{Z}^n_{+}} {z}^{{k}} \Pi_T \clh.
\]
\end{thm}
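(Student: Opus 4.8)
The statement to prove is the Müller--Vasilescu dilation theorem (Theorem \ref{Dilation Thm}): for $T \in \mathbb{S}_n(\clh)$, the map $\Pi_T$ is an isometry intertwining $T_i^*$ with $M_{z_i}^*$, and $H^2_{\cld_{T^*}}(\D^n)$ is generated by $\{z^k \Pi_T \clh\}$.

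\medskip

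The plan is to verify the isometry property by a direct power-series computation, then read off the intertwining relation from the structure of $\Pi_T$, and finally deduce the spanning property from the $C_{\cdot 0}$ hypothesis. First I would expand the resolvent product: since each $T_i$ is a contraction, $(I_\clh - w_i T_i^*)^{-1} = \sum_{m \geq 0} w_i^m T_i^{*m}$ converges, so $(\Pi_T h)(w) = D_{T^*} \sum_{k \in \Z_+^n} w^k T^{*k} h$. Computing $\|\Pi_T h\|^2$ in $H^2_{\cld_{T^*}}(\D^n)$ using orthonormality of the monomials $\{w^k\}_{k \in \Z_+^n}$ in $H^2(\D^n)$ gives $\|\Pi_T h\|^2 = \sum_{k \in \Z_+^n} \langle T^{*k} h, \mathbb{S}_n^{-1}(T,T^*) T^{*k} h \rangle$ (here using $D_{T^*}^2 = \mathbb{S}_n^{-1}(T,T^*)$, which requires $\mathbb{S}_n^{-1}(T,T^*) \geq 0$ from the Szeg\H{o} hypothesis to make sense of the square root). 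I would then recognize this sum as a telescoping series: writing $\mathbb{S}_n^{-1}(T,T^*) = \prod_{i \in I_n}(I - \Delta_{T_i^*})(I_\clh)$ in terms of the completely positive maps $\Delta_{T_i^*}(A) = T_i^* A T_i$ (note the adjoint placement needed so that summing over $k$ against $T^{*k}(\cdot)T^{k}$ telescopes correctly), the partial sums over $k$ with $k_i \leq N_i$ collapse to $\langle h, h\rangle - \langle h, \prod_{i}\Delta_{T_i^*}^{N_i+1 \text{ or truncated terms}}(I) h\rangle$; more precisely the remainder is a sum of terms each containing at least one factor $T_i^{*(N_i+1)}(\cdot) T_i^{N_i+1}$ hitting $I_\clh$, which tends to $0$ strongly because $T_i \in C_{\cdot 0}$ means $T_i^{*N} \to 0$ in SOT. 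Hence $\|\Pi_T h\|^2 = \|h\|^2$.

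\medskip

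For the intertwining relation, I would compute directly: $(M_{z_i}^* \Pi_T h)(w)$ is obtained by stripping one power of $w_i$ from the series, i.e. the coefficient of $w^k$ in $M_{z_i}^*\Pi_T h$ equals the coefficient of $w^{k + e_i}$ in $\Pi_T h$, which is $D_{T^*} T^{*(k+e_i)} h = D_{T^*} T^{*k}(T_i^* h)$; this is exactly the coefficient of $w^k$ in $\Pi_T(T_i^* h)$. So $M_{z_i}^* \Pi_T = \Pi_T T_i^*$. Finally, for the spanning statement, let $\cll = \bigvee_{k \in \Z_+^n} z^k \Pi_T \clh$; this is a submodule of $H^2_{\cld_{T^*}}(\D^n)$ and contains $\Pi_T \clh$, so it suffices to show $\cll = H^2_{\cld_{T^*}}(\D^n)$. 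The wandering subspace of $\cll$ is $\cll \ominus \sum_i z_i \cll$; I would show the constant functions $\cld_{T^*} \subseteq \he$ (i.e. $\xi \mapsto \xi$ for $\xi \in \cld_{T^*}$) lie in $\cll$ by checking that for $h \in \clh$, the constant term of $\Pi_T h$ is $D_{T^*} h$, and projecting off the higher-order terms (which lie in $\sum_i z_i \cll$ once we know $\Pi_T \clh \subseteq \cll$ and use the intertwining); since $\{D_{T^*} h : h \in \clh\}$ is dense in $\cld_{T^*}$, the full space of $\cld_{T^*}$-valued constants sits in $\cll$, and applying all monomials $z^k$ recovers a dense subset of $H^2_{\cld_{T^*}}(\D^n)$.

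\medskip

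The main obstacle is the telescoping/convergence argument in the isometry proof: one must organize the multi-index sum $\sum_{k \in \Z_+^n} T^{*k}\,\mathbb{S}_n^{-1}(T,T^*)\,T^{k}$ carefully — the cleanest route is to use the factorization $\mathbb{S}_n^{-1}(T,T^*) = \prod_{i=1}^n (I - \Delta_{T_i^*})(I_\clh)$ with the maps $\Delta_{T_i^*}$ commuting with one another (which holds because the $T_i$ commute), so that $\sum_{k_i=0}^{N_i} \Delta_{T_i^*}^{k_i}(I - \Delta_{T_i^*}) = I - \Delta_{T_i^*}^{N_i+1}$ in each variable separately, and then let $N_i \to \infty$ using $\Delta_{T_i^*}^{N_i+1}(I) = T_i^{*(N_i+1)} T_i^{N_i+1} \to 0$ in SOT, which is precisely the $C_{\cdot 0}$ condition. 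Keeping track of the cross terms when expanding the product over $i$ — and checking that every remainder term after truncating is controlled by at least one vanishing factor — is the delicate bookkeeping; everything else (the intertwining identity and the spanning property) follows formally from the power-series description of $\Pi_T$ once the isometry is in hand.
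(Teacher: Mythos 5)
First, note that the paper does not prove this theorem: it is quoted from M\"{u}ller--Vasilescu \cite{MV} (see also \cite{Timotin}), so there is no in-paper argument to compare against. Your overall strategy --- power-series expansion of the resolvents, a telescoping computation for the isometry, coefficient comparison for the intertwining, and a wandering-subspace/constant-term argument for minimality --- is exactly the standard route, and the intertwining and spanning parts of your sketch are correct and completable (for the constants, use $P_0=\prod_i(I-M_{z_i}M_{z_i}^*)$ and the intertwining to see $P_0\Pi_T h = \sum_{\epsilon\in\{0,1\}^n}(-1)^{|\epsilon|}M_z^{\epsilon}\Pi_T T^{*\epsilon}h \in \bigvee_k z^k\Pi_T\clh$, and this constant term equals $D_{T^*}h$).

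There is, however, a concrete error in the key telescoping step: your adjoints are consistently on the wrong side. The sum you must evaluate is $\sum_k \la T^k\,\mathbb{S}_n^{-1}(T,T^*)\,T^{*k}h,h\ra$, i.e.\ the iterated map is $\Delta_{T_i}(A)=T_iAT_i^*$ (the paper's convention in Section \ref{sec: trunc defect}), and correspondingly $\mathbb{S}_n^{-1}(T,T^*)=\prod_i(I-\Delta_{T_i})(I_\clh)$. Your factorization $\prod_i(I-\Delta_{T_i^*})(I_\clh)$ with $\Delta_{T_i^*}(A)=T_i^*AT_i$ produces $\sum_{\epsilon}(-1)^{|\epsilon|}T^{*\epsilon}T^{\epsilon}=\mathbb{S}_n^{-1}(T^*,T)$, which is a different operator; moreover $T^{*k}\big(T^{\epsilon}T^{*\epsilon}\big)T^{k}$ does not collapse to a single power, so the sum would not telescope at all with that choice. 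With the correct placement one gets $\sum_{k:\,k_i\le N_i}\Delta_T^k\big(\mathbb{S}_n^{-1}(T,T^*)\big)=\prod_i(I-\Delta_{T_i}^{N_i+1})(I_\clh)$, whose non-identity terms are $\pm\,T^{(N+1)\epsilon}T^{*(N+1)\epsilon}$ with quadratic form $\|T^{*(N+1)\epsilon}h\|^2\le\|T_i^{*(N_i+1)}h\|^2$ for any $i$ with $\epsilon_i=1$; this is exactly what the hypothesis $T_i\in C_{\cdot0}$ kills. As you wrote it, the remainder $T_i^{*(N_i+1)}T_i^{N_i+1}$ has quadratic form $\|T_i^{N_i+1}h\|^2$, whose vanishing is the condition $T_i\in C_{0\cdot}$ --- a hypothesis that is not assumed and does not follow from $C_{\cdot0}$. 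The fix is mechanical, but the computation as literally written invokes the wrong vanishing condition and its central identity is false.
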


The later cyclicity-type property of $T$, known as the \textit{minimality property}, will be crucial for our study. We will refer to the map $\Pi_T$ as the \textit{canonical dilation} of the Szeg\"{o} tuple $T$. By the intertwining property of $\Pi_T$, it follows that
\[
\clq_T := \Pi_T \clh,
\]
is a quotient module of $H^2_{\cld_{T^*}}(\D^n)$, and $T$ is unitary equivalent to $C_{\clq_T} :=(C_1,\dots, C_n)$ on $\clq_T$, which we write as
\begin{equation}\label{eqn: T and CT}
T \cong C_{\clq_T},
\end{equation}
where $C_i$, defined by
\[
C_i = P_{\clq_T} M_{z_i}|_{\clq_T},
\]
for all $i \in I_n$, is the \textit{model operator}. In general, given a quotient module $\clq \subseteq H^2_{\cle}(\D^n)$, for each $i \in I_n$, define the model operator $C_i$ as the compression of the shift $M_{z_i}$ on $\clq$, that is,
\[
C_i=P_{\clq}M_{z_i}|_{\clq}.
\]
Often we write $C_\clq$ (or simply $C$ if $\clq$ is clear from the context) to denote the commuting $n$-tuple $(C_1,\dots,C_n)$ on $\clq$. Note that $C_\clq \in \mathbb{S}_n(\clh)$.

In particular, representations of commuting tuples of contractions rely heavily on the representations of quotient modules of the Hardy spaces over $\D^n$. When $n$ equals $1$, the representations of quotient modules of vector-valued Hardy spaces become more lucid. To talk about this, we need to introduce inner functions. Let $\cle$ and $\cle_*$ be Hilbert spaces. Recall that $H^\infty_{\clb(\cle,\cle_*)}(\D^n)$ denotes the space of all bounded $\clb(\cle,\cle_*)$-valued analytic functions on $\D^n$. Recall also that a function $\Theta\in H^{\infty}_{\clb(\cle,\cle_*)}(\D^n)$ is \textit{inner} if
\[
\Theta(z)^*\Theta(z)=I_{\cle},
\]
for $z\in \mathbb{T}^n$ a.e. Given an inner function $\Theta\in H^{\infty}_{\clb(\cle,\cle_*)}(\D^n)$, we set the model space $\clq_\Theta$ as
\[
\clq_{\Theta}:=H^2_{\cle_*}(\D^n)/ \Theta H^2_{\cle}(\D^n).
\]
A simple computation reveals that $\clq_{\Theta}$ is a quotient module of $H^2_{\cle_*}(\D^n)$. With this note in mind, we define:

\begin{defn}\label{def: beurling QM}
A quotient module $\clq$ of $H^2_{\cle_*}(\D^n)$ is said to be a Beurling quotient module if there exist a Hilbert space $\cle$ and an inner function $\Theta\in H^{\infty}_{\clb(\cle,\cle_*)}(\D^n)$ such that
\[
\clq = \clq_\Theta.
\]
The submodule $\cls_\Theta$, where
\[
\cls_\Theta = \Theta H^2_{\cle}(\D^n),
\]
will be referred to as a Beurling submodule of $H^2_{\cle_*}(\D^n)$.
\end{defn}

In the case of $n=1$, due to Beurling, Lax, and Halmos, all quotient modules of $H^2_{\cle_*}(\D)$ qualify as Beurling quotient modules \cite[Chap. V, Theorem 3.3]{NF}. However, numerous quotient modules of $H^2_{\cle_*}(\D^n)$, $n > 1$, do not conform to Beurling type (see Rudin \cite{Rudin}). This is in fact one of the aspects of the complexity of multivariable operator theory and function theory on $\D^n$.

We conclude this section by discussing a property of classical characteristic functions that provides some (but not all) insight into the construction process of characteristic functions on $\D^n$, $n > 1$. The Sz.-Nagy and Foias characteristic function of a $C_{\cdot 0}$ contraction $T$ is the inner function $\Theta_T:\D\raro\clb(\cld_T,\cld_{T^*})$ defined by (see \eqref{eqn: defect} for relevant notations)
\begin{equation}\label{eqn: ch fn 1 var}
\Theta_T(w)=[-T+wD_{T^*}(I_{\clh}-wT^*)^{-1}D_T]|_{\cld_T},
\end{equation}
for all $w\in \D$. In view of $TD_T=D_{T^*}T$, it follows that $\Theta_T$ is a $\clb(\cld_T, \cld_{T^*})$-valued analytic function on $\D$. Moreover, for each $h\in \clh$ and $w\in \D$, we compute
\[
\begin{split}
\Theta_T(w)D_Th&=-TD_Th+wD_{T^*}(I_{\clh}-wT^*)^{-1}D_T^2h
\\
&=-D_{T^*}Th+wD_{T^*}(I_{\clh}-wT^*)^{-1}D_T^2h
\\
&=-D_{T^*}Th+wD_{T^*}(I_{\clh}-wT^*)^{-1}h-wD_{T^*}(I_{\clh}-wT^*)^{-1}T^*Th
\\
&=wD_{T^*}(I_{\clh}-wT^*)^{-1}h -D_{T^*}(I+w(I_{\clh}-wT^*)^{-1}T^*)Th
\\
&=wD_{T^*}(I_{\clh}-wT^*)^{-1}h-D_{T^*}(I_{\clh}-wT^*)^{-1}Th,
\end{split}
\]
which implies
\[
\Theta_T(w)D_T = D_{T^*}(I_{\clh}-wT^*)^{-1}(wI_{\clh}-T).
\]
This is an alternative expression of the Sz.-Nagy and Foias characteristic function (see \cite[the identity (1.2) on page 237]{NF}) and also relevant for our consideration. Indeed, when $n=1$, our approach to constructing the characteristic function will align with the above description (compare the above with Definition \ref{def: char funct}).

\section{Quotient modules}\label{sec: Beurling QM}

This section examines Beurling quotient modules of vector-valued Hardy spaces over $\D^n$, $n > 1$. Concrete classifications of Beurling quotient modules were obtained in \cite{Beurling quotient module}. Here we present yet another characterization. Along the way, we continue to develop tools for the main goal of this paper, each of which is of independent interest.

 
\begin{lem}\label{lemma: Q redu}
Let $\clq \subseteq H^2_{\cle_*}(\D^n)$ be a quotient module. Then $\clq$ reduces the $n$-tuple
\[
(M_{z_1}^*P_{\clq^{\perp}}M_{z_1}, \ldots, M_{z_n}^*P_{\clq^{\perp}}M_{z_n}).
\]
\end{lem}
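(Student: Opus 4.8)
The plan is to show, for each $i \in I_n$, that the operator $A_i := M_{z_i}^* P_{\clq^{\perp}} M_{z_i}$ leaves both $\clq$ and $\clq^{\perp}$ invariant; this is precisely the statement that $\clq$ is a reducing subspace for the $i$-th component of the tuple, and ranging over $i \in I_n$ gives the claim. Observe first that $A_i$ is self-adjoint, since $P_{\clq^{\perp}}$ is an orthogonal projection and $(M_{z_i}^* X M_{z_i})^* = M_{z_i}^* X^* M_{z_i}$. Consequently it is enough to prove a single invariance, say $A_i \clq^{\perp} \subseteq \clq^{\perp}$, because then $A_i \clq = A_i (\clq^{\perp})^{\perp} \subseteq (\clq^{\perp})^{\perp} = \clq$ follows automatically from self-adjointness.

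The key observation is that $A_i$ acts as the identity on $\clq^{\perp}$. Indeed, recall that $\clq$ being a quotient module is equivalent to $\clq^{\perp}$ being a submodule of $H^2_{\cle_*}(\D^n)$, i.e. $z_i \clq^{\perp} \subseteq \clq^{\perp}$, which is to say $M_{z_i} g \in \clq^{\perp}$ for every $g \in \clq^{\perp}$. Hence $P_{\clq^{\perp}} M_{z_i} g = M_{z_i} g$, and since $M_{z_i}$ is an isometry on $H^2_{\cle_*}(\D^n)$ we obtain
\[
A_i g = M_{z_i}^* P_{\clq^{\perp}} M_{z_i} g = M_{z_i}^* M_{z_i} g = g
\]
for all $g \in \clq^{\perp}$. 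Thus $A_i|_{\clq^{\perp}} = I_{\clq^{\perp}}$; in particular $A_i \clq^{\perp} \subseteq \clq^{\perp}$, and by the remark above $A_i \clq \subseteq \clq$ as well. Since $i \in I_n$ was arbitrary, $\clq$ reduces each $M_{z_i}^* P_{\clq^{\perp}} M_{z_i}$, hence reduces the full $n$-tuple.

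There is no serious obstacle here: the entire content is the isometry identity $M_{z_i}^* M_{z_i} = I$ together with the defining invariance $M_{z_i} \clq^{\perp} \subseteq \clq^{\perp}$. One could equally well argue directly on $\clq$: for $f \in \clq$ one writes $M_{z_i} f = P_\clq M_{z_i} f + P_{\clq^{\perp}} M_{z_i} f = C_i f + P_{\clq^{\perp}} M_{z_i} f$, so that $A_i f = M_{z_i}^*(M_{z_i} f - C_i f) = f - M_{z_i}^* C_i f$, which lies in $\clq$ because $C_i f \in \clq$ and $\clq$ is a quotient module. Either route is a short verification; the only point worth flagging is to record the self-adjointness of $A_i$ so that one invariance suffices.
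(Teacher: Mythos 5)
Your proof is correct and rests on exactly the same key identity as the paper's, namely that $M_{z_i}^*P_{\clq^{\perp}}M_{z_i}$ acts as the identity on $\clq^{\perp}$ (via the invariance $M_{z_i}\clq^{\perp}\subseteq\clq^{\perp}$ and $M_{z_i}^*M_{z_i}=I$); the paper packages this as a direct verification of $P_{\clq}A_i=A_iP_{\clq}$, while you invoke self-adjointness of $A_i$ to deduce the second invariance from the first. This is only a cosmetic difference in organization, and your version is a perfectly clean way to write the same argument.
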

\begin{proof}
We need to prove that $P_{\clq}(M_{z_i}^*P_{\clq^{\perp}}M_{z_i})=( M_{z_i}^*P_{\clq^{\perp}}M_{z_i})P_{\clq}$ for all $i \in I_n$. To this end, fix $i \in I_n$. By the invariance property, we know that $P_{\clq^\perp} M_{z_i}^*P_{\clq^{\perp}} = P_{\clq^\perp} M_{z_i}^*$, and hence $P_{\clq^\perp} M_{z_i}^*P_{\clq^{\perp}}M_{z_i} = P_{\clq^\perp}$. We now compute
\[
\begin{split}
P_{\clq}(M_{z_i}^*P_{\clq^{\perp}}M_{z_i}) & = (I - P_{\clq^\perp}) (M_{z_i}^*P_{\clq^{\perp}}M_{z_i})
\\
& = M_{z_i}^*P_{\clq^{\perp}}M_{z_i} - P_{\clq^{\perp}}
\\
& = M_{z_i}^*P_{\clq^{\perp}}M_{z_i} (P_{\clq} + P_{\clq^{\perp}}) - P_{\clq^{\perp}}
\\
& = M_{z_i}^*P_{\clq^{\perp}}M_{z_i} P_{\clq} + P_{\clq^{\perp}} - P_{\clq^{\perp}}
\\
& = (M_{z_i}^*P_{\clq^{\perp}}M_{z_i}) P_{\clq}.
\end{split}
\]
This completes the proof of the lemma.
\end{proof}

Now we turn to representations of defect operators and defect spaces of model operators. It is useful to review the definition of defect operators (see \eqref{eqn: defect}).

\begin{lem}\label{lemma: defect Ci}
Let $\clq$ be a quotient module of $H^2_{\cle_*}(\D^n)$, and let $i \in I_n$. Then
\[
D_{C_i}^2 = P_{\clq}M_{z_i}^*P_{\clq^{\perp}}M_{z_i}|_{\clq},
\]
and
\[
\cld_{C_i} = \overline{\text{ran}}(P_{\clq}M_{z_i}^*P_{\clq^{\perp}}).
\]
\end{lem}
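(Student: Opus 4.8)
The plan is to compute $D_{C_i}^2 = I_\clq - C_i^* C_i$ directly from the definition $C_i = P_\clq M_{z_i}|_\clq$, and then read off the defect space as the closure of the range of $D_{C_i}$ (equivalently of $D_{C_i}^2$). First I would write, for $f \in \clq$,
\[
C_i^* C_i f = (P_\clq M_{z_i}|_\clq)^* (P_\clq M_{z_i} f) = P_\clq M_{z_i}^* P_\clq M_{z_i} f,
\]
using that $P_\clq$ is an orthogonal projection and $f \in \clq$. Substituting $P_\clq = I - P_{\clq^\perp}$ on the middle factor gives
\[
C_i^* C_i f = P_\clq M_{z_i}^* M_{z_i} f - P_\clq M_{z_i}^* P_{\clq^\perp} M_{z_i} f = P_\clq f - P_\clq M_{z_i}^* P_{\clq^\perp} M_{z_i} f = f - P_\clq M_{z_i}^* P_{\clq^\perp} M_{z_i} f,
\]
where I used that $M_{z_i}$ is an isometry ($M_{z_i}^* M_{z_i} = I$) and $f \in \clq$. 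Hence $D_{C_i}^2 f = (I_\clq - C_i^* C_i) f = P_\clq M_{z_i}^* P_{\clq^\perp} M_{z_i} f$, which is the first claimed identity $D_{C_i}^2 = P_\clq M_{z_i}^* P_{\clq^\perp} M_{z_i}|_\clq$.

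For the defect space, the cleanest route is to observe that $D_{C_i}^2 = P_\clq M_{z_i}^* P_{\clq^\perp} M_{z_i}|_\clq$ factors as $A^* A$ on $\clq$, where $A = P_{\clq^\perp} M_{z_i}|_\clq : \clq \to \clq^\perp$; indeed $A^* = P_\clq M_{z_i}^* P_{\clq^\perp}|_{\clq^\perp}$ followed by the inclusion, so $A^* A = P_\clq M_{z_i}^* P_{\clq^\perp} M_{z_i}|_\clq = D_{C_i}^2$. For any bounded operator $A$ one has $\overline{\operatorname{ran}}\,(A^*A) = \overline{\operatorname{ran}}\, A^* = (\ker A)^\perp$, so $\cld_{C_i} = \overline{\operatorname{ran}}\, D_{C_i} = \overline{\operatorname{ran}}\, D_{C_i}^2 = \overline{\operatorname{ran}}\, A^*$. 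It remains to identify $\overline{\operatorname{ran}}\, A^*$ with $\overline{\operatorname{ran}}(P_\clq M_{z_i}^* P_{\clq^\perp})$: since $A^*$ is the restriction of $P_\clq M_{z_i}^* P_{\clq^\perp}$ to $\clq^\perp = \overline{\operatorname{ran}}\, P_{\clq^\perp}$, and $P_\clq M_{z_i}^* P_{\clq^\perp}$ annihilates $\clq$, the two operators have exactly the same range, giving $\cld_{C_i} = \overline{\operatorname{ran}}(P_\clq M_{z_i}^* P_{\clq^\perp})$.

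I do not anticipate a serious obstacle here — the statement is essentially a bookkeeping exercise with projections, using only that $M_{z_i}$ is an isometry, that $\clq$ is backward-shift invariant (so $P_{\clq^\perp} M_{z_i}^* P_{\clq^\perp} = P_{\clq^\perp} M_{z_i}^*$, already noted in Lemma~\ref{lemma: Q redu}, though I realize I do not even need that for $D_{C_i}^2$ itself), and the standard fact $\overline{\operatorname{ran}}(A^*A) = \overline{\operatorname{ran}}\, A^*$. The only point requiring a line of care is the last identification of ranges, where one must be careful that restricting $P_\clq M_{z_i}^* P_{\clq^\perp}$ to $\clq^\perp$ does not shrink the range; this is immediate because the operator kills $\clq$ outright. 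One could alternatively phrase everything in terms of $D_{C_i}^2$ directly without introducing $A$, simply noting $\overline{\operatorname{ran}}\,D_{C_i} = \overline{\operatorname{ran}}\,D_{C_i}^2$ and then that $\operatorname{ran}(P_\clq M_{z_i}^* P_{\clq^\perp} M_{z_i}|_\clq) \subseteq \operatorname{ran}(P_\clq M_{z_i}^* P_{\clq^\perp})$ with the reverse inclusion of closures following from a density/adjoint argument.
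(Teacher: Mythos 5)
Your proof is correct and follows essentially the same route as the paper: compute $C_i^*C_i = P_\clq M_{z_i}^* P_\clq M_{z_i}|_\clq$, substitute $P_\clq = I - P_{\clq^\perp}$, and use that $M_{z_i}$ is an isometry to get $D_{C_i}^2 = P_\clq M_{z_i}^* P_{\clq^\perp} M_{z_i}|_\clq$. Your factorization $D_{C_i}^2 = A^*A$ with $A = P_{\clq^\perp}M_{z_i}|_\clq$ and the identity $\overline{\operatorname{ran}}(A^*A) = \overline{\operatorname{ran}}\,A^*$ is simply a careful justification of the range identification that the paper asserts in one line.
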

\begin{proof}
Since $D_{C_i}^2 = I_{\clq}-C_i^*C_i$, and $C_i = P_\clq M_{z_i}|_\clq$, we have
\[
D_{C_i}^2 = I_{\clq}-P_{\clq}M_{z_i}^*P_{\clq}M_{z_i}|_{\clq} = I_{\clq}-P_{\clq}M_{z_i}^*(I-P_{\clq^{\perp}})M_{z_i}|_{\clq} = P_{\clq}M_{z_i}^*P_{\clq^{\perp}}M_{z_i}|_{\clq}.
\]
This also shows that $\cld_{C_i}= \overline{\text{ran}} D_{C_i}^2 = \overline{\text{ran}}(P_{\clq}M_{z_i}^*P_{\clq^{\perp}})$.
\end{proof}

We now turn to Szeg\"{o} tuples. Let $T\in \mathbb{S}_n(\clh)$. According to Theorem \ref{Dilation Thm} and \eqref{eqn: T and CT}, $T \cong C$. Consequently, the study of Szeg\"{o} tuples is synonymous with the analysis of analogous tuples of model operators. We will frequently utilize this equivalency interchangeably, occasionally even without explicitly referencing the link. 

We wish to present approximation results in terms of operators and subspaces of Szeg\"{o} tuples. The following is the first step to that end. Among many other things, this result also says a lot about invariant subspaces of $T_i$, $i \in I_n$.
 

\begin{thm}\label{More dilation}
$T\in \mathbb{S}_n(\clh)$, and let $i, j \in I_n$. Assume that $i \neq j$. Then we have the following:
\begin{enumerate}
\item[(1)] $T_j\cld_{T_i}\subseteq \cld_{T_i}$.
\item[(2)] $T_jD^2_{T_i}T_j^*\leq  D^2_{T_i}$.
\item[(3)] $T_jD_{T_i^*}^2T_j^*\leq D_{T_i^*}^2$.
\item[(4)] $T_j\cld_{T_i^*}\subseteq \cld_{T_i^*}$.
\end{enumerate}
\end{thm}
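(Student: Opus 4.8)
The plan is to establish items (1)--(4) of Theorem \ref{More dilation} by first proving the key positivity inequality in item (2) and then deriving everything else from it and its adjoint analogue. The crucial structural input is the defining property of Szeg\"{o} tuples, namely $\mathbb{S}^{-1}_n(T,T^*) \geq 0$, and the factored form of the Szeg\"{o} inverse polynomial $\mathbb{S}^{-1}_n(z,w) = \prod_{i \in I_n}(1 - z_i \bar w_i)$. Since this polynomial factors coordinatewise, for a commuting tuple $T$ we can write, fixing the pair $i \neq j$,
\[
\mathbb{S}^{-1}_n(T,T^*) = (I - \Delta_{T_j})\Big(\prod_{k \in I_n \setminus\{j\}} (I - \Delta_{T_k})(I)\Big),
\]
where $\Delta_{T_k}(A) = T_k A T_k^*$ and the maps $(I-\Delta_{T_k})$ all commute with one another (because $T$ is commuting). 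I would isolate the factor $(I - \Delta_{T_i})$ and set $A := \prod_{k \in I_n \setminus \{i,j\}}(I - \Delta_{T_k})(I)$, so that $\mathbb{S}^{-1}_n(T,T^*) = (I - \Delta_{T_j})(I - \Delta_{T_i})(A)$.

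Next I would want to upgrade the single hypothesis $\mathbb{S}^{-1}_n(T,T^*) \geq 0$ to the statement that \emph{every} partial product $\prod_{k \in S}(I - \Delta_{T_k})(I) \geq 0$ for $S \subseteq I_n$; in particular $(I - \Delta_{T_i})(A) = D^2_{T_i}$-type expressions are positive. This is a standard fact for Szeg\"{o} tuples, and here I would either cite it from \cite{MV} or prove it quickly by induction on $|I_n \setminus S|$ using that each $\Delta_{T_k}$ is completely positive and the maps commute: if $P \geq 0$ then $(I - \Delta_{T_k})(P)$ need not be positive in general, but for Szeg\"{o} tuples the telescoping with the $C_{\cdot 0}$ condition forces it. Concretely, once we know $(I - \Delta_{T_j})\big((I - \Delta_{T_i})(A)\big) \geq 0$ and that $T_j \in C_{\cdot 0}$, applying $\Delta_{T_j}$ repeatedly and summing the telescoping series gives
\[
(I - \Delta_{T_i})(A) = \sum_{m=0}^{\infty} \Delta_{T_j}^m\big((I-\Delta_{T_j})(I-\Delta_{T_i})(A)\big) \geq 0,
\]
where the series converges in SOT precisely because $T_j^{*m} \to 0$. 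Now observe that $(I - \Delta_{T_i})(A)$, again by the same telescoping argument applied in the remaining $k \neq i,j$ variables together with the $C_{\cdot 0}$ conditions on those $T_k$, collapses onto $D^2_{T_i} = I - T_i^* T_i$ up to the positive remainder terms; more to the point, the positivity of $(I - \Delta_{T_j})(B) \geq 0$ for the positive operator $B := (I-\Delta_{T_i})(A)$ says exactly $T_j B T_j^* \leq B$. Specializing and tracking that $B$ relates to $D^2_{T_i}$ via the remaining positive factors yields $T_j D^2_{T_i} T_j^* \leq D^2_{T_i}$, which is item (2).

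From item (2) the remaining items follow by soft arguments. For item (1): $T_j D^2_{T_i} T_j^* \leq D^2_{T_i}$ gives, for $h \in \clh$, $\|D_{T_i} T_j^* h\|^2 \leq \|D_{T_i} h\|^2$, so $T_j^*$ maps $\overline{\operatorname{ran}} D_{T_i} = \cld_{T_i}$ into itself — wait, I need $T_j$, not $T_j^*$; so instead I use the inequality in the form that $\ker D_{T_i} = \ker D^2_{T_i}$ is invariant under $T_j^*$, hence its orthogonal complement $\cld_{T_i}$ is invariant under $T_j$. For items (3) and (4): apply the identical reasoning to the Szeg\"{o}-inverse expression read off the \emph{other} slot, using that $\mathbb{S}^{-1}_n(T,T^*)$ is built symmetrically so that the analogous factorization produces $T_j D^2_{T_i^*} T_j^* \leq D^2_{T_i^*}$ (here $D^2_{T_i^*} = I - T_i T_i^*$ in the contraction sense, not the first-kind defect), and then item (4) follows from (3) the same way (1) follows from (2), via invariance of the kernel. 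The main obstacle I anticipate is the bookkeeping in the second paragraph: cleanly justifying that the iterated operator $(I - \Delta_{T_i})\prod_{k \neq i,j}(I - \Delta_{T_k})(I)$ is both positive \emph{and} dominates (or is dominated in the right direction by) the plain defect $D^2_{T_i}$, so that the clean inequality $T_j D^2_{T_i} T_j^* \leq D^2_{T_i}$ actually drops out rather than some inequality involving the messy iterated expression. I would handle this by exploiting that each $(I - \Delta_{T_k})$ applied to a positive operator $P$ satisfies $(I-\Delta_{T_k})(P) \leq P$ when $P$ and $T_k$ interact appropriately (commuting contractions), together with the fact that all these maps commute, to sandwich the iterated expression between $0$ and $D^2_{T_i}$ in the needed sense.
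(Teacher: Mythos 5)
There is a genuine gap, and it sits exactly where you flagged your ``main obstacle.'' The factorization $\mathbb{S}^{-1}_n(T,T^*)=\prod_{k\in I_n}(I-\Delta_{T_k})(I_\clh)$ only ever produces operators built from $T_kT_k^*$: the innermost factor is $(I-\Delta_{T_k})(I)=I-T_kT_k^*=D^2_{T_k^*}$, and every partial product $\prod_{k\in S}(I-\Delta_{T_k})(I)$ is a polynomial in the $T_kT_k^*$'s. Your telescoping argument (conjugate by $T_m^l$, sum, and use $T_m\in C_{\cdot 0}$ to kill the tail) is sound and does show that every partial product is positive; taking $S=\{i,j\}$ gives $(I-\Delta_{T_j})(D^2_{T_i^*})\geq 0$, which is item (3), and item (4) follows. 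But item (2) concerns $D^2_{T_i}=I-T_i^*T_i$, which never appears in any of these partial products, and commutativity of the tuple gives you no way to convert $T_iT_i^*$ into $T_i^*T_i$. The sentence ``Specializing and tracking that $B$ relates to $D^2_{T_i}$ via the remaining positive factors yields $T_jD^2_{T_i}T_j^*\leq D^2_{T_i}$'' is precisely the unjustified step: $B$ relates to $D^2_{T_i^*}$, not $D^2_{T_i}$, and the closing suggestion that $(I-\Delta_{T_k})(P)\leq P$ ``when $P$ and $T_k$ interact appropriately'' does not bridge this. Note also that $(I-\Delta_{T_j})(D^2_{T_i})\geq 0$ is exactly the positivity of the truncated defect $D^2_{i,T,\{j\}}$, which the paper itself only establishes later and only for Beurling tuples; yet Theorem \ref{More dilation}(2) claims it for all Szeg\"{o} tuples, so it genuinely needs a different input.

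That input, in the paper, is the M\"{u}ller--Vasilescu model: one replaces $T$ by $C=(P_\clq M_{z_1}|_\clq,\ldots,P_\clq M_{z_n}|_\clq)$ via Theorem \ref{Dilation Thm}, uses Lemma \ref{lemma: defect Ci} to write $D_{C_i}^2=P_\clq M_{z_i}^*P_{\cls}M_{z_i}|_\clq$ with $\cls=\clq^\perp$, and then computes $C_jD_{C_i}^2C_j^*=P_\clq M_{z_i}^*P_{z_j\cls}M_{z_i}|_\clq$, so that (2) reduces to the comparison of projections $P_{z_j\cls}\leq P_{\cls}$; part (1) then comes from a Douglas-type range inclusion (your soft derivation of (1) from (2) via invariance of $\ker D_{T_i}$ under $T_j^*$ is also fine, once you have (2)). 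So your route, suitably written up, recovers (3) and (4) without the dilation, which is a nice economy, but for (1) and (2) you must pass to the model (or find some other mechanism that accesses $I-T_i^*T_i$); the abstract Szeg\"{o} positivity alone will not do it.
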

\begin{proof}
In view of Theorem \ref{Dilation Thm} (or \eqref{eqn: T and CT}, to be more specific), there exists a quotient module $\clq$ of $H^2_{\cle}(\D^n)$, for some Hilbert space $\cle$, such that $T\cong C$, where
\[
C = (P_\clq M_{z_1}|_\clq, \ldots, P_\clq M_{z_n}|_\clq).
\]
So we prove the result for $C$ instead of $T$. Lemma \ref{lemma: defect Ci} implies $D_{C_t}^2=P_{\clq}M_{z_t}^*P_{\cls}M_{z_t}|_{\clq}$ for all $t \in I_n$, where
\[
\cls = \clq^\perp.
\]
Now, we observe that $M_{z_t} P_\cls M_{z_t}^*$ is an orthogonal projection, and moreover, we have
\begin{equation}\label{eqn: proj onto zS}
M_{z_t} P_\cls M_{z_t}^* = P_{z_t \cls},
\end{equation}
for all $t \in I_n$. Suppose $i\neq j$. Since $C_j = P_{\clq}M_{z_j}|_{\clq}$, it follows that
\[
C_jD_{C_i}^2 = P_{\clq}M_{z_j}P_{\clq} M_{z_i}^*P_{\cls}M_{z_i}|_{\clq}.
\]
The reducing property of Lemma \ref{lemma: Q redu} implies that $P_\clq M_{z_i}^*P_{\cls}M_{z_i} = M_{z_i}^*P_{\cls}M_{z_i} P_\clq $, and hence
\[
C_jD_{C_i}^2 = P_{\clq}M_{z_j}M_{z_i}^*P_{\cls}M_{z_i}|_{\clq} = P_{\clq} M_{z_i}^* M_{z_j} P_{\cls}M_{z_i}|_{\clq} = (P_{\clq}M_{z_i}^* P_{\cls})M_{z_j}P_{\cls} M_{z_i}|_{\clq}.
\]
In the above, we have used the facts that $M_{z_i}^* M_{z_j} = M_{z_j} M_{z_i}^*$ and $ P_{\cls} M_{z_j}P_{\cls} = M_{z_j} P_{\cls}$ (recall that $\cls$ is invariant under $M_{z_i}$). By Lemma \ref{lemma: defect Ci}, we know that $\cld_{C_i}=\overline{ran}(P_{\clq}M_{z_i}^*|_{\cls})$, and consequently (by the Douglas' range inclusion theorem)  $C_j\cld_{C_i}\subseteq \cld_{C_i}$. This proves (1). By Lemma \ref{lemma: defect Ci} again, we have
\[
C_jD_{C_i}^2C_j^* = P_{\clq}M_{z_i}^*M_{z_j}P_{\cls} M_{z_j}^* M_{z_i}|_{\clq} = P_{\clq}M_{z_i}^* (M_{z_j}P_{\cls} M_{z_j}^*) M_{z_i}|_{\clq}.
\]
As \eqref{eqn: proj onto zS} implies $M_{z_i} P_\cls M_{z_i}^* = P_{z_i \cls}$, it follows that
\begin{equation}\label{eqn: CjDiCj}
C_jD_{C_i}^2C_j^* = P_{\clq}M_{z_i}^*P_{z_j\cls} M_{z_i}|_{\clq}.
\end{equation}
Since $P_{z_j\cls}\leq P_{\cls}$, we conclude that $P_{\clq}M_{z_i}^*P_{z_j\cls} M_{z_i} P_{\clq} \leq P_{\clq}M_{z_i}^*P_{\cls} M_{z_i} P_{\clq}$, which implies
\[
C_jD_{C_i}^2C_j^*\leq P_{\clq}M_{z_i}^*P_{\cls}  M_{z_i}|_{\clq}=D_{C_i}^2,
\]
and completes the proof of part (2). For (3), we simply compute
\[
\begin{split}
D_{C_i^*}^2-C_j D_{C_i^*}^2 C_j^*&=I-C_iC_i^*-C_jC_j^*+C_iC_jC_i^*C_j^*
\\
&=P_{\clq}(I-M_{z_i} M_{z_i}^*-M_{z_j} M_{z_j}^*+M_{z_i}M_{z_j} M_{z_i}^*M_{z_j}^*)|_{\clq}
\\
&=P_{\clq}(I-M_{z_i} M_{z_i}^*)(I-M_{z_j} M_{z_j}^*)|_{\clq},
\end{split}
\]
as $M_{z_i}$ and $M_{z_j}$ are doubly commuting. The doubly commuting property also implies that $\{(I-M_{z_i} M_{z_i}^*), (I-M_{z_j} M_{z_j}^*)\}$ is a pair of commuting projections, and subsequently, the product $(I-M_{z_i} M_{z_i}^*)(I-M_{z_j} M_{z_j}^*)$ itself is a projection. The above, then, implies
\[
D_{C_i^*}^2-C_j D_{C_i^*}^2 C_j^* \geq 0,
\]
which proves that $C_j D_{C_i^*}^2 C_j^*\leq D_{C_i^*}^2$. In particular, this also implies $C_j\cld_{C_i^*}\subseteq \cld_{C_i^*}$ and completes the proof of (4).
\end{proof}

We previously pointed out in Theorem \ref{Dilation Thm} (which comes from \cite{MV}) that a Szeg\"{o} tuple $T \in \mathbb{S}_n(\clh)$ admits the following model:
\[
T \cong (P_{\clq} M_{z_1}|_{\clq}, \ldots, P_{\clq} M_{z_n}|_{\clq}),
\]
where $\clq$ is a quotient module of a vector-valued Hardy space $H^2_{{\cld}_{T^*}}(\D^n)$. A natural question arises: under what additional conditions on the tuple $T$ does the quotient module $\clq$ become a Beurling quotient module? In other words, we want
\[
\clq = \clq_{\Theta}:= H^2_{\cle_*}(\D^n)/\Theta H^2_{\cle}(\D^n),
\]
for some inner function $\Theta\in H^{\infty}_{\clb(\cle,\cle_*)}(\D^n)$ and Hilbert spaces $\cle$ and $\cle_*$. In \cite{Beurling quotient module}, a classification was provided, which is the first two equivalences stated in the following result. The additional equivalent condition, (3), is a new addition.

\begin{thm}\label{Beurling_qm}
Let $T \in  \mathbb{S}_n(\clh)$. The following are equivalent:
\begin{enumerate}
\item[(1)] There exist Hilbert spaces $\cle$ and $\cle_*$ and inner function $\Theta\in H^{\infty}_{\clb(\cle,\cle_*)}(\D^n)$ such that $T \cong (P_{\clq_{\Theta}}M_{z_1}|_{\clq_{\Theta}},\dots, P_{\clq_{\Theta}}M_{z_n}|_{\clq_{\Theta}})$.
\item[(2)] $(I_{\clh}-T_i^*T_i)(I_{\clh}-T_j^*T_j)=0$ for all $i\neq j$.
\item[(3)] $T_i|_{\cld_{T_j}}: \cld_{T_j} \raro \cld_{T_j}$ is an isometry for all $i\neq j$.
\end{enumerate}
\end{thm}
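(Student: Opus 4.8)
The plan is to establish the cycle of implications $(1)\Rightarrow(2)\Rightarrow(3)\Rightarrow(1)$, using the known equivalence $(1)\Leftrightarrow(2)$ from \cite{Beurling quotient module} as a black box and concentrating the real work on $(2)\Leftrightarrow(3)$. Since $(1)\Leftrightarrow(2)$ is granted, it suffices to prove $(2)\Leftrightarrow(3)$ directly, and moreover, because of the unitary equivalence $T\cong C$ supplied by Theorem \ref{Dilation Thm} together with \eqref{eqn: T and CT}, it is enough to verify the equivalence for a tuple $C=(C_1,\dots,C_n)$ of model operators on a quotient module $\clq\subseteq H^2_{\cle}(\D^n)$; both conditions (2) and (3) are manifestly preserved under joint unitary equivalence.

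First I would record the elementary observation that condition (3), namely that $T_i|_{\cld_{T_j}}$ maps $\cld_{T_j}$ into itself and acts isometrically there, is equivalent to the pair of operator identities $T_i\cld_{T_j}\subseteq\cld_{T_j}$ and $D_{T_j}T_i^*T_iD_{T_j}=D_{T_j}^2$ (the latter being the polarized form of isometry on the defect space, using that $\cld_{T_j}=\overline{\mathrm{ran}}\,D_{T_j}$). Part (1) of Theorem \ref{More dilation} already gives the inclusion $T_i\cld_{T_j}\subseteq\cld_{T_j}$ for free for any Szeg\"o tuple, so the entire content of (3) reduces to the single identity $D_{T_j}(I-T_i^*T_i)D_{T_j}=0$, i.e.
\[
D_{T_j}D_{T_i}^2D_{T_j}=0,
\]
which by taking square roots (or by the $C^*$-identity $\|D_{T_i}D_{T_j}x\|^2=\langle D_{T_j}D_{T_i}^2D_{T_j}x,x\rangle$) is exactly $D_{T_i}D_{T_j}=0$. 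On the other hand, $D_{T_i}D_{T_j}=0$ is precisely $D_{T_i}^2D_{T_j}^2=0$ after squaring and using self-adjointness, which is condition (2) written as $(I-T_i^*T_i)(I-T_j^*T_j)=0$ composed appropriately — one should be slightly careful here: (2) says the product of the projections-like operators $D_{T_i}^2$ and $D_{T_j}^2$ vanishes, and since these are positive, $D_{T_i}^2D_{T_j}^2=0\iff D_{T_i}D_{T_j}=0\iff D_{T_j}D_{T_i}^2D_{T_j}=0$. Chaining these equivalences gives $(2)\Leftrightarrow(3)$ cleanly.

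To make this rigorous I would proceed in the model-operator picture, where $D_{C_i}^2=P_\clq M_{z_i}^*P_{\clq^\perp}M_{z_i}|_\clq$ by Lemma \ref{lemma: defect Ci}. The key computation is then $D_{C_i}D_{C_j}=0 \iff D_{C_i}^2D_{C_j}^2=0$, and to unpack $D_{C_i}^2 D_{C_j}^2$ I would use, as in the proof of Theorem \ref{More dilation}, the identities $M_{z_t}P_\cls M_{z_t}^*=P_{z_t\cls}$ (with $\cls=\clq^\perp$), the commutation relations $M_{z_i}^*M_{z_j}=M_{z_j}M_{z_i}^*$, the invariance $P_\cls M_{z_j}P_\cls=M_{z_j}P_\cls$, and the reducing property from Lemma \ref{lemma: Q redu}. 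This turns products of defect operators into products of projections onto the subspaces $z_i\cls$ and $z_j\cls$, whose intersection structure is governed by the doubly commuting property of the shift. The standard equivalence for positive operators, $AB=0\iff A^{1/2}B^{1/2}=0$ when $A,B\geq0$ (proved via $B^{1/2}AB^{1/2}\geq0$ and $\|A^{1/2}B^{1/2}x\|^2=\langle B^{1/2}AB^{1/2}x,x\rangle$), is the linchpin that lets me pass freely between condition (2), the relation $D_{T_i}D_{T_j}=0$, and condition (3).

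The main obstacle — and the only place requiring genuine care rather than bookkeeping — is the direction $(3)\Rightarrow(2)$, or equivalently showing that the isometry statement on $\cld_{T_j}$ forces the exact identity $D_{T_j}D_{T_i}^2D_{T_j}=0$ and not merely an inequality. The inclusion and inequality directions ($T_i\cld_{T_j}\subseteq\cld_{T_j}$ and $T_iD_{T_j}^2T_i^*\le D_{T_j}^2$) come from Theorem \ref{More dilation} and hold for all Szeg\"o tuples, so the subtlety is that isometry is a two-sided condition: one must exploit that $\|T_iD_{T_j}x\|=\|D_{T_j}x\|$ for all $x$, i.e. $\langle(D_{T_j}^2-D_{T_j}T_i^*T_iD_{T_j})x,x\rangle=0$, and hence $D_{T_j}(I-T_i^*T_i)D_{T_j}=0$ since this operator is positive; then $D_{T_j}D_{T_i}^2D_{T_j}=0$ gives $(D_{T_i}D_{T_j})^*(D_{T_i}D_{T_j})=0$ and so $D_{T_i}D_{T_j}=0$, whence $(I-T_i^*T_i)(I-T_j^*T_j)=D_{T_i}^2D_{T_j}^2=0$ after one more application of the positive-operator lemma. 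Conversely, $(2)\Rightarrow(3)$ reverses this chain. I would present the argument symmetrically in $i$ and $j$, remarking that $D_{T_i}D_{T_j}=0$ is a symmetric condition so that the roles of $i$ and $j$ in (2) and (3) may be interchanged without loss.
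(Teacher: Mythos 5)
Your proposal is correct and follows essentially the same route as the paper: take $(1)\Leftrightarrow(2)$ from \cite{Beurling quotient module} as given, use part (1) of Theorem \ref{More dilation} to get the invariance $T_i\cld_{T_j}\subseteq\cld_{T_j}$ for free, and then pass between the isometry condition, $D_{T_j}(I-T_i^*T_i)D_{T_j}=0$, $D_{T_i}D_{T_j}=0$, and $D_{T_i}^2D_{T_j}^2=0$ via the standard positive-operator arguments. The only cosmetic difference is that the paper proves $(2)\Leftrightarrow(3)$ directly at the abstract operator level, so your proposed detour through the model-operator picture and the projections $P_{z_i\cls}$, $P_{z_j\cls}$ is unnecessary for this step (it is needed only for Theorem \ref{More dilation} itself, which both arguments invoke).
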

\begin{proof}
As pointed out, the equivalence of (1) and (2) has already been proven in \cite{Beurling quotient module}. Here we show that (2) is equivalent to (3).
Suppose that $(I-T_j^*T_j)(I-T_i^*T_i)=0$. Then
\[
(I-T_j^*T_j)(I-T_i^*T_i)^{\frac{1}{2}}=0.
\]
In particular, for $h\in \clh$, we have $(I - T_j^*T_j)D_{T_i}h=0$, and hence $T_j^*T_j D_{T_i}h = D_{T_i}h$. Then
\[
\|T_jD_{T_i}h\|^2 = \la T_j^*T_jD_{T_i}h, D_{T_i}h\ra=\la D_{T_i}h, D_{T_i}h \ra,
\]
and hence
\[
\|T_jD_{T_i}h\|=\|D_{T_i}h\|.
\]
Now, part (1) of Theorem \ref{More dilation} says that $T_j\cld_{T_i}\subseteq \cld_{T_i}$, and consequently,  $\|T_j|_{\cld_{T_i}}f\|=\|f\|$ for all $f\in \cld_{T_i}$. This proves that (2) implies (3). For the reverse direction, assume that $f=D_{T_i}h$ for some $h\in \clh$. Then $\|T_j|_{\cld_{T_i}}f\|=\|f\|$. This implies
\[
\la D_{T_i}T_j^* T_jD_{T_i}h,h\ra=\la D^2_{T_i}h,h \ra,
\]
and so, $\la D_{T_i}(I-T_j^*T_j)D_{T_i}h,h \ra=0$. In particular, $\|(I-T_j^*T_j)^{\frac{1}{2}}D_{T_i}h\|=0$, which shows that $D_{T_j}D_{T_i}=0$. Then, we have
\[
(I-T_j^*T_j)(I-T_i^*T_i)=D_{T_j}^2D_{T_i}^2=0,
\]
which proves that (3) implies (2), and completes the proof of the theorem.
\end{proof}

Now, we apply this framework to quotient modules. Specifically, we consider $T$ as $C$ on some quotient module of a vector-valued Hardy space. In this specific context, we obtain additional information about these quotient modules due to the concrete nature of the situation.

\begin{thm}\label{pure_isometry}
Let $\clq\subseteq H^2_{\cle_*}(\D^n)$ be a quotient module. Then the following are equivalent:
\begin{enumerate}
\item[(1)] $\clq$ is a Beurling quotient module.
\item[(2)] $(I_{\clq}-C_i^*C_i)(I_{\clq}-C_j^*C_j)=0$ for all $i\neq j$.
\item[(3)] $C_i|_{\cld_{C_j}}: \cld_{C_j} \raro \cld_{C_j}$ is an isometry for all $i\neq j$.
\item[(4)] $z_i\cld_{C_j}\subseteq \cld_{C_j}$ for all $i\neq j$.
\end{enumerate}
\end{thm}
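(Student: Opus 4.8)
The plan is to establish the cycle of equivalences $(1)\Leftrightarrow(2)\Leftrightarrow(3)$ by quoting Theorem \ref{Beurling_qm} applied to the Szeg\"{o} tuple $C = C_\clq$, and then close the loop by proving $(3)\Leftrightarrow(4)$, which is the genuinely new content of the theorem. Indeed, $C \in \mathbb{S}_n(\clq)$, and the model operator tuple for $C$ sitting inside $H^2_{\cle_*}(\D^n)$ is $C$ itself (since $\clq$ is already a quotient module and $\cld_{C^*}$ can be identified appropriately), so Theorem \ref{Beurling_qm} gives that $\clq$ is a Beurling quotient module $\Leftrightarrow$ $(I_\clq - C_i^*C_i)(I_\clq - C_j^*C_j) = 0$ for all $i \neq j$ $\Leftrightarrow$ $C_i|_{\cld_{C_j}}$ is an isometry for all $i \neq j$. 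That covers $(1)\Leftrightarrow(2)\Leftrightarrow(3)$ immediately.

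For $(3)\Leftrightarrow(4)$, the first observation I would make is the concrete description of $\cld_{C_j}$ from Lemma \ref{lemma: defect Ci}: with $\cls = \clq^\perp$, one has $\cld_{C_j} = \overline{\text{ran}}(P_\clq M_{z_j}^* P_\cls)$ and $D_{C_j}^2 = P_\clq M_{z_j}^* P_\cls M_{z_j}|_\clq$. The key point is that on $\clq$ the operator $C_i$ acts as $P_\clq M_{z_i}|_\clq$, but because of the reducing property from Lemma \ref{lemma: Q redu} (which says $P_\clq$ commutes with $M_{z_i}^* P_\cls M_{z_i}$), the restriction of $C_i$ to the subspace $\cld_{C_j}$ can be unwound to show $C_i|_{\cld_{C_j}}$ agrees with (a compression of) $M_{z_i}$, and hence $C_i$ being isometric on $\cld_{C_j}$ forces $M_{z_i}$ to map $\cld_{C_j}$ isometrically into $\clq$. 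Since $M_{z_i}$ is itself an isometry on $H^2_{\cle_*}(\D^n)$, the content of condition $(3)$ is precisely that $M_{z_i}(\cld_{C_j}) \subseteq \clq$ (no loss of norm in the compression), and part $(1)$ of Theorem \ref{More dilation} already gives $z_i \cld_{C_j} \subseteq \cld_{C_j}$ once we know $z_i \cld_{C_j} \subseteq \clq$. So $(3)$ should be equivalent to $z_i \cld_{C_j} \subseteq \clq$, and then Theorem \ref{More dilation}(1) upgrades this to $z_i \cld_{C_j} \subseteq \cld_{C_j}$, which is $(4)$. The converse, $(4)\Rightarrow(3)$: if $z_i \cld_{C_j} \subseteq \cld_{C_j} \subseteq \clq$, then for $f \in \cld_{C_j}$ we get $C_i f = P_\clq M_{z_i} f = M_{z_i} f$, so $\|C_i f\| = \|M_{z_i} f\| = \|f\|$, giving the isometry in $(3)$ directly.

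More carefully, for $(3)\Rightarrow(4)$ I would argue as follows. From the equivalence $(3)\Leftrightarrow(2)$ already in hand, assume $D_{C_i}^2 D_{C_j}^2 = 0$, equivalently (as in the proof of Theorem \ref{Beurling_qm}) $D_{C_j} D_{C_i} = 0$ and $C_i^* C_i D_{C_j} h = D_{C_j} h$ for all $h$. Then for $f \in \cld_{C_j}$, writing $f = \lim D_{C_j} h_k$, we have $C_i^* C_i f = f$; combined with $C_i f \in \clq$ and $\|C_i f\| = \|f\|$, and using Theorem \ref{More dilation}(1) which gives $C_i \cld_{C_j} \subseteq \cld_{C_j}$, I need to promote $C_i f = P_\clq M_{z_i} f$ to $M_{z_i} f$, i.e. show $M_{z_i} f \in \clq$. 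This follows because $\|M_{z_i} f\| = \|f\| = \|C_i f\| = \|P_\clq M_{z_i} f\|$ forces $M_{z_i} f \in \clq$; hence $z_i \cld_{C_j} \subseteq \clq$, and then $z_i f = M_{z_i} f = C_i f \in \cld_{C_j}$, giving $(4)$.

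The main obstacle, and the place requiring the most care, is the bookkeeping that identifies $C_i|_{\cld_{C_j}}$ with a genuine restriction of the shift $M_{z_i}$ rather than merely its compression --- i.e. making precise that the isometry hypothesis in $(3)$ forces no ``leakage'' out of $\clq$ under $M_{z_i}$. This rests on the interplay between Lemma \ref{lemma: Q redu} (the reducing property) and the Douglas-type range identifications of Lemma \ref{lemma: defect Ci}, together with Theorem \ref{More dilation}(1). Once that identification is nailed down, the rest is a short norm computation in both directions; the ``in particular'' sentence about $M_{z_i}|_{\cld_{C_j}}$ being a shift (unilateral, by the $C_{\cdot 0}$ property of Szeg\"{o} tuples, which forces $\bigcap_m z_i^m \cld_{C_j} = \{0\}$) then follows as an immediate corollary.
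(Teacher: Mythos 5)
Your proposal is correct, and for the one genuinely new implication it takes a different (and more elementary) route than the paper. Both you and the paper dispose of $(1)\Leftrightarrow(2)\Leftrightarrow(3)$ by citing Theorem \ref{Beurling_qm} for the tuple $C=C_\clq$, and both prove $(4)\Rightarrow(3)$ by the same one-line computation ($z_i\cld_{C_j}\subseteq\cld_{C_j}\subseteq\clq$ forces $C_iD_{C_j}h=M_{z_i}D_{C_j}h$, so norms are preserved). The difference is in how the cycle is closed: the paper proves $(1)\Rightarrow(4)$ by invoking the Beurling representation $\cls=\Theta H^2_{\cle}(\D^n)$ explicitly, checking that $M_{z_i}^*P_{\cls}M_{z_i}M_{z_j}^*$ leaves $\Theta H^2_{\cle}(\D^n)$ invariant, and extracting the operator identity $C_jD_{C_i}^2=M_{z_j}D_{C_i}^2$ (the projection $P_\clq$ drops out), after which Theorem \ref{More dilation}(1) yields $z_j\cld_{C_i}\subseteq\cld_{C_i}$. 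You instead prove $(3)\Rightarrow(4)$ directly: for $f\in\cld_{C_j}$ the isometry hypothesis gives $\|P_\clq M_{z_i}f\|=\|C_if\|=\|f\|=\|M_{z_i}f\|$, and since $P_\clq$ is an orthogonal projection, equality in $\|P_\clq g\|\le\|g\|$ forces $M_{z_i}f\in\clq$, whence $z_if=M_{z_i}f=C_if\in\cld_{C_j}$ by Theorem \ref{More dilation}(1). Your argument never touches the inner function and rests only on this equality case plus Theorem \ref{More dilation}(1); the paper's longer computation has the side benefit of exhibiting the identity $C_jD_{C_i}^2=M_{z_j}D_{C_i}^2$, which makes explicit that the compression is a genuine restriction. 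Note that the heuristic ``unwinding'' of $C_i|_{\cld_{C_j}}$ via Lemma \ref{lemma: Q redu} in your second paragraph is dispensable: the norm argument in your third paragraph is already a complete and correct proof.
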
	
	
\begin{proof}
Theorem \ref{Beurling_qm} shows that (1), (2), and (3) are equivalent. Our strategy is to show that $(1)\Rightarrow (4) \Rightarrow (3)$. Let $i\neq j$. For $(1)\Rightarrow (4)$, assume that $\clq$ is a Beurling quotient module. Then $C := C_\clq \in \mathbb{S}_n(\clq)$. We observed in the proof of Theorem \ref{More dilation} that
\[
C_jD_{C_i}^2 = P_{\clq}M_{z_j}M_{z_i}^*P_{\cls}M_{z_i}|_{\clq}.
\]
We claim that the projection factor $P_{\clq}$ in this representation is irrelevant. Assume that $\clq=\clq_{\Theta}$ for some inner function $\Theta \in H^\infty_{\clb(\cle, \cle_*)}(\D^n)$ and Hilbert space $\cle_*$. In this case, we have $\cls = \clq_{\Theta}^{\perp}=\Theta H^2_{\cle}(\D^n)$. In the following computation, we use the doubly commuting property that $M_{z_i}M_{z_j}^* = M_{z_j}^*M_{z_i}$ and the fact that $M_{\Theta}^*M_{z_j}^* M_{z_i}M_\Theta = M_{z_j}^* M_{z_i}$:
\[
\begin{split}
(M_{z_i}^*P_{\cls}M_{z_i}M_{z_j}^*)\Theta H^2_{\cle}(\D^n)&= M_{z_i}^*M_{\Theta}M_{\Theta}^*M_{z_i}M_{z_j}^* \Theta H^2_{\cle}(\D^n)
\\
&
= M_{z_i}^*M_{\Theta} M_{\Theta}^*M_{z_j}^* M_{z_i}\Theta H^2_{\cle}(\D^n)
\\
&=  M_{z_i}^* M_{\Theta} M_{z_j}^* M_{z_i} H^2_{\cle}(\D^n)
\\
&=M_{\Theta}M_{z_j}^* H^2_{\cle}(\D^n)
\\
&\subseteq \Theta H^2_{\cle}(\D^n).
\end{split}
\]
From this, we deduce that $M_{z_j}M_{z_i}^*P_{\cls}M_{z_i} \clq \subseteq \clq$. We have also used the fact that $\clq = \clq_\Theta$, $(\Theta H^2_{\cle}(\D^n))^\perp = \clq_\Theta$, and $(M_{z_i}^*P_{\cls}M_{z_i}M_{z_j}^*)^* = M_{z_j}M_{z_i}^*P_{\cls}M_{z_i}$. With this in mind, we have
\[
C_jD_{C_i}^2 = P_{\clq}M_{z_j}M_{z_i}^*P_{\cls}M_{z_i}|_{\clq} = M_{z_j}M_{z_i}^*P_{\cls}M_{z_i}|_{\clq}.
\]
The reducibility property of Lemma \ref{lemma: Q redu} says $(M_{z_i}^*P_{\cls}M_{z_i}) P_{\clq} = P_\clq (M_{z_i}^*P_{\cls}M_{z_i})$, and then
\[
C_jD_{C_i}^2 = M_{z_j}P_{\clq}M_{z_i}^*P_{\cls}M_{z_i}|_{\clq}.
\]
Recalling from Lemma \ref{lemma: defect Ci} that $D_{C_i}^2 = P_{\clq}M_{z_i}^*P_{\clq^{\perp}}M_{z_i}|_{\clq}$, it follows that
\[
C_jD_{C_i}^2 = M_{z_j}D_{C_i}^2.
\]
By part (4) of Theorem \ref{More dilation}, $C_j\cld_{C_i}\subseteq \cld_{C_i}$, which ensures that $z_j\cld_{C_i}\subseteq \cld_{C_i}$. This proves that $(1)$ implies $(4)$. Finally, assume that (4) holds. Then $z_j\cld_{C_i}\subseteq \cld_{C_i}\subseteq \clq$. For all $h\in \clq$, we obtain
\[
M_{z_j}D_{C_i}h=P_{\clq}M_{z_j}D_{C_i}h=C_jD_{C_i}h,
\]
and we have
\[
\|C_jD_{C_i}h\|=\|M_{z_j}D_{C_i}h\|=\|D_{C_i}h\|.
\]
In other words, $C_j|_{\cld_{C_i}}$ is an isometry. This proves that (4) implies (3), and completes the proof of the theorem.
\end{proof}

The fourth condition in the above statement is particularly intriguing: it asserts that the restriction operator $M_{z_i}|_{\cld_{C_j}}$ represents a shift on $\cld_{C_j}$ for all $i \neq j$. This new observation will be very useful in what is to come.

For an $n$-tuple to admit a characteristic function that is inner in our context, it must satisfy one of the equivalence conditions in Theorem \ref{Beurling_qm}. We refer to such tuples as Beurling tuples. Recall from Definition \ref{def: Beurl tuples} that $T \in \mathbb{S}_n(\clh)$ is a Beurling tuple if and only if $D_{T_i}D_{T_j}=0$ for all $i \neq j$. Recall also that $\mathbb{S}_n^{B}(\clh)$ denotes the set of such Beurling $n$-tuples. In other words, $\mathbb{S}_n^{B}(\clh)$ represents the set of all tuples that require investigation to study characteristic functions. We now turn to analyze Beurling tuples.

\section{Truncated defect operators}\label{sec: trunc defect}

The properties detected in class $\mathbb{S}_n^{B}(\clh)$ thus far only identify those tuples for which characteristic functions exist. And we call them Beurling tuples. Naturally, our ultimate goal is to compute or explicitly represent these characteristic functions (of Beurling tuples). Significant work remains to achieve this goal, and we begin preparing the necessary tools to facilitate it. The first goal is to construct defect operators that are parameterized by subsets and elements of $I_n$. The computations involved are substantial and may contribute to future advancements in the field.

We begin by introducing some standard tools that are common in dilation theory \cite{MV}. Given $T\in \clb(\clh)$, define the completely positive map $\Delta_T:\clb{(\clh)}\raro \clb(\clh)$ by
\[
\Delta_T(X) =TXT^*,
\]
for all $X\in \clb(\clh)$. It is easy to see for commuting pair of operators $(T_1,T_2)$ acting on some Hilbert space that
\[
\Delta_{T_1}\Delta_{T_2}=\Delta_{T_2}\Delta_{T_1}.
\]
Recall the definition of $\mathbb{S}^{-1}_n(T, T^*)$ from \eqref{eqn: Szego inverse}. In view of the above, a simple computation, for each $T \in \clb(\clh)^n_c$, yields that
\[
\mathbb{S}^{-1}_n(T,T^*)= \prod_{i \in I_n} (I_{\clb(\clh)} - \Delta_{T_i})(I_{\clh}).
\]
Now, we are ready to introduce truncated defect operators, which will be placed on the diagonals of the defect operators of the second kind.
	
\begin{defn}\label{defn: trunc defect}
Let $T \in \clb^n_c(\clh)$. For each $j\in I_n$ and nonempty $\clp\subseteq I_n \setminus\{j\}$, we define the $(j, \clp)$-th truncated defect operator $D^2_{j,T,\clp}$ by
\[
D^2_{j,T,\clp}=\prod_{k\in \clp} (I_{\clb(\clh)}-\Delta_{T_k})(D_{T_j}^2).
\]
Moreover, we define $D^2_{j,T, \emptyset}:=D^2_{T_j}$.
\end{defn}

We will often suppress the pair $(j, \clp)$ from the definition as it will be clear from the context. In the case of $\clp= I_n \setminus \{j\}$, we simply write the defect $D^2_{j,T,\clp}$ as
\[
D^2_{j,T, I_n \setminus \{j\}} = D^2_{j,T}.
\]
In other words, for all $j \in I_n$, we set
\begin{equation}\label{eqn: D jT}
D^2_{j,T} = \prod_{k \in I_n \setminus \{j\}} (I_{\clb(\clh)} - \Delta_{T_k})(D^2_{T_j}).
\end{equation}

We remark that the truncated defect operators defined above need not be positive. In other words, as of now, the power $2$ in the definition of the defect operator is merely a matter of notation. However, we must focus on Beurling tuples. We need a notation: Let $\cls$ be a submodule of $H^2_{\cle_*}(\D^n)$. For each $\clp\subseteq I_n$, we define the \textit{$\clp$-th wandering subspace} $\clw_\clp$ by
\begin{equation}\label{defn: W}
\clw_{\clp}:=\bigcap_{i\in \clp}(\cls \ominus z_i\cls).
\end{equation}
We often refer to $\clw_{\clp}$ as a \textit{wandering subspace}, as the context will make $\clp$ evident.

The following lemma concerning doubly commuting submodules is well known and can be attributed partly to S{\l}oci\'{n}ski \cite{Slo} and Mandrekar \cite{Mand} (also see \cite{JayII}). We present proof of part of the statement.

\begin{lem}\label{wandering}
Let $\Theta\in H^{\infty}_{\clb(\cle,\cle_*)}(\D^n)$ be an inner function, and let $\cls_{\Theta}=\clq_{\Theta}^{\perp}$. Then, the following are true:

\begin{enumerate}
\item[(1)] $\clw=\Theta \cle$.
\item[(2)] $z_j(\cls_{\Theta}\ominus z_i\cls_{\Theta})\subseteq (\cls_{\Theta}\ominus z_i\cls_{\Theta})$ for all $i\neq j$.
\item[(3)] $z_j\clw_{\clp}\subseteq \clw_{\clp}$ for all $j \in I_n$ and $\clp\subseteq I_n \setminus \{j\}$.
\item[(4)]  $\clw_{\clp}\ominus z_j \clw_{\clp}=\clw_{\clp\cup \{j \}}$ for all $\clp\subsetneq I_n$, $\clp \neq \emptyset$, and $j\in I_n \setminus\clp$.
\end{enumerate}
\end{lem}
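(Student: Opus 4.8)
The plan is to prove the four assertions of Lemma~\ref{wandering} in the order listed, since each builds naturally on the previous, and the entire content is essentially bookkeeping with the doubly commuting isometries $M_{z_1}, \ldots, M_{z_n}$ restricted to the submodule $\cls_\Theta = \Theta H^2_{\cle}(\D^n)$. Recall $\cls_\Theta$ is invariant under each $M_{z_i}$, and on $\cls_\Theta$ the operators $M_{z_i}|_{\cls_\Theta}$ form an $n$-tuple of doubly commuting isometries, because $M_\Theta$ is an isometry intertwining $(M_{z_1}, \ldots, M_{z_n})$ on $H^2_{\cle}(\D^n)$ with their restrictions to $\cls_\Theta$, and $(M_{z_1}, \ldots, M_{z_n})$ on $H^2_{\cle}(\D^n)$ is doubly commuting. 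The key elementary fact I will invoke repeatedly is that for a single isometry $V$ with wandering subspace $\cle_V = \ker V^*$, one has $\ran V = (\ker V^*)^\perp$ within $\overline{\ran V^0} = \clh$; and, crucially, that doubly commuting isometries on a common space behave independently so that $z_j(\cls_\Theta \ominus z_i \cls_\Theta) \subseteq \cls_\Theta \ominus z_i \cls_\Theta$ whenever $i \neq j$.

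For part (1), I would observe that $\clw = \clw_{I_n} = \bigcap_{i \in I_n}(\cls_\Theta \ominus z_i \cls_\Theta)$. Since $\cls_\Theta = M_\Theta H^2_{\cle}(\D^n)$ and $M_\Theta$ is an isometry, $z_i \cls_\Theta = M_\Theta (z_i H^2_{\cle}(\D^n))$, so $\cls_\Theta \ominus z_i \cls_\Theta = M_\Theta\big(H^2_{\cle}(\D^n) \ominus z_i H^2_{\cle}(\D^n)\big)$, using that $M_\Theta$ preserves orthogonal complements within its range. Intersecting over $i \in I_n$ and again pulling out the isometry $M_\Theta$,
\[
\clw = M_\Theta \bigcap_{i \in I_n}\big(H^2_{\cle}(\D^n) \ominus z_i H^2_{\cle}(\D^n)\big) = M_\Theta\big(\cle\big) = \Theta\cle,
\]
where the middle equality is the standard fact that the joint wandering subspace of the shift tuple on $H^2_{\cle}(\D^n)$ is the space of constants $\cle$ (a function orthogonal to $z_i H^2$ for every $i$ has no terms with any positive power of any variable). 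For part (2): by the doubly commuting relation $M_{z_i}^* M_{z_j} = M_{z_j} M_{z_i}^*$ on $\cls_\Theta$, if $f \in \cls_\Theta$ with $M_{z_i}^* f = 0$ (equivalently $f \perp z_i \cls_\Theta$ inside $\cls_\Theta$, noting $z_i\cls_\Theta = \ran(M_{z_i}|_{\cls_\Theta})$ so $f \perp z_i\cls_\Theta \iff M_{z_i}|_{\cls_\Theta}^* f = 0$), then $M_{z_i}^*(M_{z_j} f) = M_{z_j} M_{z_i}^* f = 0$, and $M_{z_j} f \in \cls_\Theta$ since $\cls_\Theta$ is a submodule; hence $z_j(\cls_\Theta \ominus z_i \cls_\Theta) \subseteq \cls_\Theta \ominus z_i \cls_\Theta$.

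For part (3), I would iterate part (2): $\clw_\clp = \bigcap_{i \in \clp}(\cls_\Theta \ominus z_i \cls_\Theta)$, and for $j \notin \clp$, part (2) gives $z_j(\cls_\Theta \ominus z_i \cls_\Theta) \subseteq \cls_\Theta \ominus z_i \cls_\Theta$ for each $i \in \clp$, so $z_j \clw_\clp \subseteq \bigcap_{i\in\clp} z_j(\cls_\Theta\ominus z_i\cls_\Theta) \subseteq \clw_\clp$. For part (4), the claim $\clw_\clp \ominus z_j \clw_\clp = \clw_{\clp \cup \{j\}}$ is where the real work lies, and I expect this to be the main obstacle. The inclusion $\clw_{\clp\cup\{j\}} \subseteq \clw_\clp \ominus z_j\clw_\clp$ is not hard: an element of $\clw_{\clp\cup\{j\}}$ lies in $\clw_\clp$ and is orthogonal to $z_j\cls_\Theta \supseteq z_j\clw_\clp$. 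For the reverse, suppose $f \in \clw_\clp$ with $f \perp z_j\clw_\clp$; I must show $f \perp z_j\cls_\Theta$, i.e. $f \in \clw_{\clp\cup\{j\}}$. The point is that $M_{z_j}|_{\cls_\Theta}$ is an isometry commuting (doubly) with the $M_{z_i}|_{\cls_\Theta}$, $i \in \clp$, each of which leaves $\clw_\clp$ invariant by part (3); combined with the analogous invariance of $z_j\cls_\Theta$, one shows $P_{\clw_\clp} M_{z_j} = M_{z_j} P_{\clw_\clp}$ on $\cls_\Theta$ restricted appropriately (using doubly commuting to push $M_{z_j}^*$ past the $M_{z_i}^*$ defining $P_{\clw_\clp}$ via Wold-type decomposition), so that $z_j\clw_\clp = z_j\cls_\Theta \cap \clw_\clp$; then $f \perp z_j\clw_\clp$ together with $f \in \clw_\clp$ and $z_j\cls_\Theta \ominus z_j\clw_\clp \subseteq z_j\cls_\Theta$ being reachable via the doubly commuting structure forces $f \perp z_j\cls_\Theta$. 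I would carry this out by the standard Wold decomposition for the doubly commuting isometric tuple on $\cls_\Theta$, which decomposes $\cls_\Theta$ into ranges of monomials in the $M_{z_i}$ applied to the joint wandering subspaces, making all these orthogonality relations transparent; this is precisely the content attributed to S{\l}oci\'nski and Mandrekar, so for parts (2)--(4) it suffices to invoke that structure theory and the excerpt indeed says only part of the statement will be proved in detail.
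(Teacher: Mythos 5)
Your proposal is correct and follows essentially the same route as the paper: parts (1)--(3) are obtained by transporting everything to $H^2_{\cle}(\D^n)$ through the isometry $M_\Theta$ (equivalently, by using that the restrictions $M_{z_i}|_{\cls_\Theta}$ form a doubly commuting tuple of isometries unitarily equivalent to the shifts), and part (4) is, in both your writeup and the paper, ultimately deferred to the known Wold-type decomposition for doubly commuting isometries (the paper cites \cite[Corollary 2.3]{JayII}). The only caution is notational: in part (2) you should consistently write $(M_{z_i}|_{\cls_\Theta})^*$ rather than $M_{z_i}^*$, since $f\perp z_i\cls_\Theta$ is weaker than $M_{z_i}^*f=0$; your parenthetical shows you intend the correct reading.
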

	
\begin{proof}
First, we observe that
\[
\clw = \bigcap_{i=1}^n\left(\cls_{\Theta}\ominus z_i\cls_{\Theta}\right) = \bigcap_{i=1}^n\left(\Theta H^2_{\cle}(\D^n)\ominus z_i\Theta H^2_{\cle}(\D^n) \right) = \Theta (\bigcap_{i=1}^n \left(H^2_{\cle}(\D^n)\ominus z_i H^2_{\cle}(\D^n)\right)).
\]
Since $\Theta (\bigcap_{i=1}^n \left(H^2_{\cle}(\D^n)\ominus z_i H^2_{\cle}(\D^n)\right)) = \ker M_{z_i}^*$, it follows that $\clw =\Theta \cle$. This proves (1). For (2), we compute
\[
z_j(\cls_{\Theta}\ominus z_i\cls_{\Theta}) = z_j\Theta \left(  H^2_{\cle}(\D^n)\ominus z_i H^2_{\cle}(\D^n)\right) = \Theta z_j\left(  H^2_{\cle}(\D^n)\ominus z_i H^2_{\cle}(\D^n)\right).
\]
Since $\Theta z_j\left(  H^2_{\cle}(\D^n)\ominus z_i H^2_{\cle}(\D^n)\right) \subseteq \Theta \left(H^2_{\cle}(\D^n)\ominus z_i H^2_{\cle}(\D^n)\right)$, it follows that $z_j(\cls_{\Theta}\ominus z_i\cls_{\Theta}) \subseteq (\cls_{\Theta}\ominus z_i\cls_{\Theta})$. For (3). we observe $\clw_{\clp}=\cap_{k\in\clp}(\cls_{\Theta}\ominus z_k\cls_{\Theta})$, and hence
\[
z_j\clw_{\clp}= z_j \bigcap_{k\in \clp}(\cls_{\Theta}\ominus z_k\cls_{\Theta})
=\bigcap_{k\in \clp} z_j(\cls_{\Theta}\ominus z_k\cls_{\Theta}) \subseteq \bigcap_{k\in \clp} (\cls_{\Theta}\ominus z_k\cls_{\Theta}) = \clw_{\clp},
\]
by part (2). Part (4) is simply \cite[Corollary 2.3]{JayII}.
\end{proof}

We are now presenting a key property of Beurling tuples, namely that defect operators introduced in Definition \ref{defn: trunc defect} are indeed positive, justifying the superscript $2$ in the definition.

\begin{thm}\label{defect_1}
Let $T \in\mathbb{S}_n^{B}(\clh)$, $j\in I_n$, and let $\clp \subseteq I_n \setminus\{j\}$. Then
\[
 D^2_{j,T,\clp} \geq 0.
\]
\end{thm}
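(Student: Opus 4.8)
The plan is to reduce to the Hardy-space model, establish an explicit closed formula for $D^2_{j,T,\clp}$ in terms of the wandering subspaces of the model submodule, and read off positivity from that formula.

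First, since $T\in\mathbb{S}_n^{B}(\clh)$ satisfies condition (2) of Theorem \ref{Beurling_qm}, Theorem \ref{Dilation Thm} together with Theorem \ref{Beurling_qm}(1) gives an inner function $\Theta\in H^{\infty}_{\clb(\cle,\cld_{T^*})}(\D^n)$ with $T\cong C:=(C_1,\dots,C_n)$ on the Beurling quotient module $\clq:=\clq_\Theta=H^2_{\cld_{T^*}}(\D^n)/\Theta H^2_\cle(\D^n)$. Because the truncated defect operators are built from a formula preserved under joint unitary equivalence, it suffices to prove $D^2_{j,C,\clp}\geq 0$. I would write $\cls:=\cls_\Theta=\Theta H^2_\cle(\D^n)$, so that $M_\Theta$ is an isometry onto $\cls$ intertwining the coordinate shifts, and recall the wandering subspaces $\clw_\clp=\bigcap_{i\in\clp}(\cls\ominus z_i\cls)$ of Lemma \ref{wandering}, adopting the convention $\clw_\emptyset:=\cls$.

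The heart of the proof is the identity
\[
D^2_{j,C,\clp}=P_\clq M_{z_j}^* P_{\clw_\clp} M_{z_j}|_{\clq}\qquad(\clp\subseteq I_n\setminus\{j\}),
\]
which I would prove by induction on $|\clp|$. The case $\clp=\emptyset$ is exactly Lemma \ref{lemma: defect Ci}. For the inductive step, fix $m\in I_n\setminus(\clp'\cup\{j\})$, so that $D^2_{j,C,\clp'\cup\{m\}}=(I-\Delta_{C_m})(D^2_{j,C,\clp'})$. The first point is an auxiliary invariance fact: the self-adjoint operator $M_{z_j}^*P_{\clw_{\clp'}}M_{z_j}$ leaves $\cls$ invariant, hence also leaves $\clq=\cls^\perp$ invariant. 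I would prove this by a direct computation, using Lemma \ref{wandering}(1) to write $\clw_{\clp'}=\Theta\big(\bigcap_{k\in\clp'}(H^2_\cle(\D^n)\ominus z_k H^2_\cle(\D^n))\big)$ together with the intertwining $M_\Theta M_{z_k}=M_{z_k}M_\Theta$; this shows that on $\Theta H^2_\cle(\D^n)$ the operator $M_{z_j}^*P_{\clw_{\clp'}}M_{z_j}$ equals $M_\Theta$ followed by a coordinate projection of $H^2_\cle(\D^n)$ and then $M_\Theta^*$, which manifestly sends $\cls$ into $\cls$. (This extends the projection-removal computation in the proof of Theorem \ref{pure_isometry}, and it reduces to Lemma \ref{lemma: Q redu} when $\clp'=\emptyset$.) Granting it, the inductive hypothesis lets one delete the inner projection, and then the double commutativity of $M_{z_j}$ and $M_{z_m}$ yields $\Delta_{C_m}(D^2_{j,C,\clp'})=P_\clq M_{z_j}^*\big(M_{z_m}P_{\clw_{\clp'}}M_{z_m}^*\big)M_{z_j}|_{\clq}$. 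Now $z_m\clw_{\clp'}\subseteq\clw_{\clp'}$ by Lemma \ref{wandering}(3) and $M_{z_m}$ is isometric, so $M_{z_m}P_{\clw_{\clp'}}M_{z_m}^*=P_{z_m\clw_{\clp'}}$; and by Lemma \ref{wandering}(4), $\clw_{\clp'}\ominus z_m\clw_{\clp'}=\clw_{\clp'\cup\{m\}}$, that is, $P_{z_m\clw_{\clp'}}=P_{\clw_{\clp'}}-P_{\clw_{\clp'\cup\{m\}}}$. Subtracting gives $(I-\Delta_{C_m})(D^2_{j,C,\clp'})=P_\clq M_{z_j}^* P_{\clw_{\clp'\cup\{m\}}} M_{z_j}|_{\clq}$, which closes the induction.

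With this identity in hand, positivity is immediate: $P_{\clw_\clp}$ is an orthogonal projection, so $M_{z_j}^*P_{\clw_\clp}M_{z_j}=(P_{\clw_\clp}M_{z_j})^*(P_{\clw_\clp}M_{z_j})\geq 0$ on $H^2_{\cld_{T^*}}(\D^n)$, whence its compression $D^2_{j,C,\clp}=P_\clq M_{z_j}^*P_{\clw_\clp}M_{z_j}|_{\clq}$ is positive; transporting back along $T\cong C$ gives $D^2_{j,T,\clp}\geq 0$. I expect the only genuinely delicate step to be the auxiliary invariance fact used to remove the inner projection in the induction — a several-variables refinement of the computation already appearing in the proof of Theorem \ref{pure_isometry} — everything else being bookkeeping with Lemma \ref{wandering} and the doubly commuting structure of the coordinate shifts.
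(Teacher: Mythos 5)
Your proof is correct and follows essentially the same route as the paper: reduce to the model tuple on a Beurling quotient module, establish $D^2_{j,C,\clp}=P_\clq M_{z_j}^*P_{\clw_\clp}M_{z_j}|_\clq$ by induction using Lemma \ref{wandering}, and read off positivity. The only cosmetic difference is that your auxiliary invariance fact for deleting the inner projection can be bypassed by the simpler identity $P_\clq M_{z_m}P_\clq=P_\clq M_{z_m}$ (valid since $z_m\cls\subseteq\cls$ forces $P_\clq P_{z_m\cls}=0$), which is what the paper implicitly uses.
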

\begin{proof}
As per our understanding (or in view of Theorem \ref{Dilation Thm} and \eqref{eqn: T and CT}), we consider a tuple of model operators $C = (C_1, \ldots, C_n)$ on a quotient module $\clq \subseteq H^2_{\cle_*}(\D^n)$ in place of the Szeg\"{o} tuple $T$ on $\clh$ (here $\cle_*$ is a Hilbert space). For $i\in \clp$, we recall from Lemma \ref{lemma: defect Ci} that $D_{C_j}^2 = P_{\clq} M_{z_j}^* P_{\cls}M_{z_j}|_{\clq}$, where $\clq^\perp = \cls$. Also recall from \eqref{eqn: CjDiCj} that $C_i D_{C_j}^2C_i^* = P_{\clq}M_{z_j}^*P_{z_i\cls} M_{z_j}|_{\clq}$. Since $(I-\Delta_{C_i})(D_{C_j}^2) = D_{C_j}^2-C_iD_{C_j}^2C_i^*$, it follows that
\[
\begin{split}
(I-\Delta_{C_i})(D_{C_j}^2) & = P_{\clq}M_{z_j}^*P_{\cls}M_{z_j}|_{\clq}-P_{\clq}M_{z_j}^*P_{z_i\cls}  M_{z_j}|_{\clq}
\\&=P_{\clq}M_{z_j}^*(P_{\cls}-P_{z_i\cls})M_{z_j}|_{\clq}
\\
&=P_{\clq}M_{z_j}^*P_{\cls \ominus z_i\cls}M_{z_j}|_{\clq}.
\end{split}
\]
In the above, we have used the fact that $\cls$ is a submodule so that $z_i\cls \subseteq \cls$, and hence
\begin{equation}\label{eqn: S minus zS}
P_{\cls}-P_{z_i\cls} = P_{\cls \ominus z_i\cls}.
\end{equation}
Let $k\in \clp$ and $k\neq i$. We proceed to compute
\[
\begin{split}
(I-\Delta_{C_k})(I-\Delta_{C_i})(D_{C_j}^2) & = (D_{C_j}^2-C_iD_{C_j}^2C_i^*) -C_k(D_{C_j}^2-C_iD_{C_j}^2C_i^*)C_k^*
\\
&=P_{\clq}M_{z_j}^*P_{\cls \ominus z_i\cls}M_{z_j}|_{\clq}-P_{\clq}M_{z_k}M_{z_j}^*P_{\cls \ominus z_i\cls}M_{z_j}M_{z_k}^*|_{\clq}
\\
&=P_{\clq}M_{z_j}^* \left(P_{\cls \ominus z_i\cls}- M_{z_k}  P_{\cls \ominus z_i\cls} M_{z_k}^*  \right)  M_{z_j}|_{\clq}
\\
&=P_{\clq}M_{z_j}^*  \left(P_{\cls \ominus z_i\cls}- P_{z_k(\cls \ominus z_i\cls)} \right) M_{z_j}|_{\clq}.
\end{split}
\]
Here we have used the identity, in the spirit of \eqref{eqn: proj onto zS}, that $M_{z_k} P_{\cls \ominus z_i\cls} M_{z_k}^* = P_{z_k(\cls \ominus z_i\cls)}$. By part (2) of Lemma \ref{wandering}, it follows that $P_{\cls \ominus z_i\cls}-  P_{z_k(\cls \ominus z_i\cls)} \geq 0$. In particular, we have
\[
P_{\cls \ominus z_i\cls}-  P_{z_k(\cls \ominus z_i\cls)}=P_{(\cls \ominus z_i\cls)\ominus z_k(\cls \ominus z_i\cls)}=P_{(\cls \ominus z_i\cls)\cap (\cls \ominus z_k\cls)},
\]
and consequently
\[
(I-\Delta_{C_k})(I-\Delta_{C_i})(D_{C_j}^2)=P_{\clq}M_{z_j}^*P_{(\cls \ominus z_i\cls)\cap (\cls \ominus z_k\cls)}M_{z_j}|_{\clq}\geq 0.
\]
Recall that $D^2_{j,C,\clp} = \displaystyle\prod_{i\in \clp}(I-\Delta_{C_i})(D_{C_j}^2)$, and hence, applying induction, one can now say that
\begin{equation}\label{eqn: DjCP}
D^2_{j,C,\clp} = P_{\clq}M_{z_j}^*P_{\clw_{\clp}}M_{z_j}|_{\clq},
\end{equation}
where $\clw_{\clp}= \displaystyle {\bigcap_{i\in \clp}} (\cls \ominus z_i\cls)$. Since $P_{\clq}M_{z_j}^*P_{\clw_{\clp}}M_{z_j}|_{\clq} \geq 0$, we conclude that $D^2_{j,C,\clp} \geq 0$.
\end{proof}

Given the positivity property of truncated defect operators for Beurling tuples as proved above, it is now fitting to talk about positive truncated defect operators and corresponding defect spaces.

\begin{defn}\label{def: trunc def}
Let $T \in\mathbb{S}_n^{B}(\clh)$. For each $j\in I_n$ and $\clp \subseteq I_n \setminus\{j\}$, the truncated defect operator $D_{j,T,\clp}$ is defined by
\[
D_{j,T,\clp} := (D^2_{j,T,\clp})^{\frac{1}{2}}.
\]
The corresponding truncated defect space $\cld_{j,T,\clp}$ is defined by
\[
\cld_{j,T,\clp}=\overline{\text{ran}}D_{j,T,\clp}.
\]
\end{defn}

The proof of the following properties of defect operators and truncated defect spaces follows the same approach as the proof of the preceding theorem.

\begin{cor}\label{cor: defect_1}
Let $T \in\mathbb{S}_n^{B}(\clh)$, $j\in I_n$, and let $\clp \subseteq I_n \setminus\{j\}$. Then
\begin{enumerate}
\item $T_iD^2_{j,T,\clp}T_i^*\leq D^2_{j,T,\clp}$ for all $i \in I_n \setminus  (\clp\cup \{j\})$.			
\item $T_i\cld_{j,T,\clp}\subseteq \cld_{j,T,\clp}$ for all $i\in I_n \setminus  (\clp\cup \{j\})$.
\end{enumerate}
\end{cor}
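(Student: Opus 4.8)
The plan is to mimic the computation in the proof of Theorem \ref{defect_1}, working with the model tuple $C=(C_1,\dots,C_n)$ on a quotient module $\clq\subseteq H^2_{\cle_*}(\D^n)$ in place of $T$ (so $\cls=\clq^\perp$ is a submodule), which is legitimate by Theorem \ref{Dilation Thm} and \eqref{eqn: T and CT}. The key identity to carry over is \eqref{eqn: DjCP}, namely $D^2_{j,C,\clp}=P_\clq M_{z_j}^* P_{\clw_\clp} M_{z_j}|_\clq$, where $\clw_\clp=\bigcap_{i\in\clp}(\cls\ominus z_i\cls)$. Fix $i\in I_n\setminus(\clp\cup\{j\})$. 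Then exactly as in the derivation of \eqref{eqn: CjDiCj}, using that $M_{z_i}$ and $M_{z_j}$ doubly commute and that $M_{z_i}$ commutes with $P_{\clw_\clp}$ in the sense that $M_{z_i}P_{\clw_\clp}M_{z_i}^*=P_{z_i\clw_\clp}$ (the spirit of \eqref{eqn: proj onto zS}), one computes
\[
C_i D^2_{j,C,\clp} C_i^* = P_\clq M_{z_j}^* \big(M_{z_i}P_{\clw_\clp}M_{z_i}^*\big) M_{z_j}|_\clq = P_\clq M_{z_j}^* P_{z_i\clw_\clp} M_{z_j}|_\clq .
\]
Here one must justify that the stray $P_\clq$ factors can be moved or dropped; the relevant tool is the reducing property of Lemma \ref{lemma: Q redu} together with the doubly commuting relations, just as in Theorem \ref{defect_1}'s proof.

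Next I would invoke Lemma \ref{wandering}(3), which gives $z_i\clw_\clp\subseteq\clw_\clp$ (valid since $i\notin\clp$), so $P_{z_i\clw_\clp}\leq P_{\clw_\clp}$, and hence
\[
C_i D^2_{j,C,\clp} C_i^* = P_\clq M_{z_j}^* P_{z_i\clw_\clp} M_{z_j}|_\clq \leq P_\clq M_{z_j}^* P_{\clw_\clp} M_{z_j}|_\clq = D^2_{j,C,\clp},
\]
which is precisely statement (1) after translating back to $T$. For statement (2), I would compute $(I-\Delta_{C_i})(D^2_{j,C,\clp}) = P_\clq M_{z_j}^* P_{\clw_\clp\ominus z_i\clw_\clp} M_{z_j}|_\clq = P_\clq M_{z_j}^* P_{\clw_{\clp\cup\{i\}}} M_{z_j}|_\clq$, using $P_{\clw_\clp}-P_{z_i\clw_\clp}=P_{\clw_\clp\ominus z_i\clw_\clp}$ (since $z_i\clw_\clp\subseteq\clw_\clp$) and Lemma \ref{wandering}(4) to identify $\clw_\clp\ominus z_i\clw_\clp=\clw_{\clp\cup\{i\}}$. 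This exhibits $D^2_{j,C,\clp}-C_iD^2_{j,C,\clp}C_i^*$ as $P_\clq M_{z_j}^* P_{\clw_{\clp\cup\{i\}}}M_{z_j}|_\clq\geq 0$, and then a Douglas range-inclusion argument — the same one used at the end of the proof of part (1) of Theorem \ref{More dilation} — gives $C_i\cld_{j,C,\clp}\subseteq\cld_{j,C,\clp}$, establishing (2). (Alternatively (2) follows from (1) directly: $C_iD^2C_i^*\leq D^2$ forces $\overline{\text{ran}}(C_iD)\subseteq\overline{\text{ran}}(D)$, hence $C_i\cld_{j,C,\clp}=C_i\overline{\text{ran}}\,D\subseteq\overline{\text{ran}}\,D=\cld_{j,C,\clp}$.)

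I expect the only genuinely delicate point to be the careful bookkeeping of the projection factor $P_\clq$ when commuting $M_{z_i}$ and $M_{z_j}^*$ past $P_{\clw_\clp}$ and $P_\clq$ — i.e., checking that $P_\clq M_{z_j}^*(M_{z_i}P_{\clw_\clp}M_{z_i}^*)M_{z_j}|_\clq$ really equals $C_iD^2_{j,C,\clp}C_i^*$. This is entirely analogous to the manipulations already carried out in the proofs of Theorem \ref{More dilation} and Theorem \ref{defect_1} (notably the passage to \eqref{eqn: CjDiCj} and the inductive step leading to \eqref{eqn: DjCP}), so in the write-up I would simply say ``proceeding exactly as in the proof of Theorem \ref{defect_1}'' and point to the two facts that make it work: the reducing property (Lemma \ref{lemma: Q redu}) and Lemma \ref{wandering}(3)--(4). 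No new ideas beyond those proofs are needed.
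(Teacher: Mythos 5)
Your proposal is correct and follows essentially the same route as the paper: both pass to the model tuple $C$ on a (Beurling) quotient module, use the identity $D^2_{j,C,\clp}=P_{\clq}M_{z_j}^*P_{\clw_{\clp}}M_{z_j}|_{\clq}$ from the proof of Theorem \ref{defect_1}, compute $C_iD^2_{j,C,\clp}C_i^*=P_{\clq}M_{z_j}^*P_{z_i\clw_{\clp}}M_{z_j}|_{\clq}$, and conclude (1) from $z_i\clw_{\clp}\subseteq\clw_{\clp}$ (Lemma \ref{wandering}), with (2) following from (1) by a Douglas range-inclusion argument exactly as the paper does.
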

\begin{proof}
Let $\clp\subsetneq I_n \setminus\{j\}$ and let $i\in I_n \setminus (\clp\cup \{j\})$. In the proof of the above theorem (more specifically, see \eqref{eqn: DjCP}), we have already calculated that $D^2_{j,C,\clp} = P_{\clq}M_{z_j}^*P_{\clw_{\clp}}M_{z_j}|_{\clq}$. Then
\[
\begin{split}
C_iD^2_{j,C,\clp}C^*_i&=P_{\clq}M_{z_i}P_{\clq}M_{z_j}^*P_{\clw_{\clp}}M_{z_j}M_{z_i}^*|_{\clq}
\\
&=P_{\clq}M_{z_i}M_{z_j}^*P_{\clw_{\clp}}M_{z_j}M_{z_i}^*|_{\clq}
\\
&=P_{\clq}M_{z_j}^*M_{z_i}P_{\clw_{\clp}}M_{z_i}^*M_{z_j}|_{\clq}
\\
&=P_{\clq}M_{z_j}^*P_{z_i\clw_{\clp}}M_{z_j}|_{\clq},
\end{split}
\]
and consequently
\[
\begin{split}
D^2_{j,C,\clp}-C_iD^2_{j,C,\clp}C^*_i&=P_{\clq}M_{z_j}^*P_{\clw_{\clp}}M_{z_j}|_{\clq}-P_{\clq}M_{z_j}^*P_{z_i\clw_{\clp}}M_{z_j}|_{\clq}
\\
&=P_{\clq}M_{z_j}^*\left(P_{\clw_{\clp}}-  P_{z_i\clw_{\clp}} \right) M_{z_j}|_{\clq}.
\end{split}
\]
By part (2) of Lemma \ref{wandering}, we know that $P_{\clw_{\clp}}-  P_{z_i\clw_{\clp}} \geq 0$, and hence
\[
D^2_{j,C,\clp}-C_iD^2_{j,C,\clp}C^*_i\geq 0.
\]
This establishes part (1), which specifically implies part (2), thereby completing the proof of the corollary.
\end{proof}

The following order reversing properties of the truncated defect operators and spaces can now be inferred naturally.

\begin{thm}\label{ABC}
Let $T \in\mathbb{S}_n^{B}(\clh)$, $j \in I_n$, and let $\clp_1\subseteq \clp_2 \subseteq I_n \setminus\{j\}$. Then:
\begin{enumerate}
\item[(1)] $D_{j,T,\clp_2}\leq D_{j,T,\clp_1}.$
\item[(2)] $\cld_{j,T,\clp_2}\subseteq \cld_{j,T,\clp_1}.$
\end{enumerate}
\end{thm}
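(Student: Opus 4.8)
The statement asserts that the truncated defect operators decrease as the index set $\clp$ grows, which is exactly what one expects from the structural formula \eqref{eqn: DjCP} established in the proof of Theorem \ref{defect_1}. So the plan is to reduce, as before, to the model operators: by Theorem \ref{Dilation Thm} and \eqref{eqn: T and CT} we may replace $T \in \mathbb{S}_n^B(\clh)$ by the tuple $C = C_\clq$ of model operators on a quotient module $\clq \subseteq H^2_{\cle_*}(\D^n)$, with $\cls = \clq^\perp$ a submodule. Then \eqref{eqn: DjCP} gives
\[
D^2_{j,C,\clp_1} = P_{\clq}M_{z_j}^*P_{\clw_{\clp_1}}M_{z_j}|_{\clq}, \qquad D^2_{j,C,\clp_2} = P_{\clq}M_{z_j}^*P_{\clw_{\clp_2}}M_{z_j}|_{\clq},
\]
where $\clw_{\clp} = \bigcap_{i \in \clp}(\cls \ominus z_i \cls)$. (One must also note the edge cases $\clp_1 = \emptyset$, where $D^2_{j,T,\emptyset} = D^2_{T_j} = P_\clq M_{z_j}^* P_\cls M_{z_j}|_\clq$ by Lemma \ref{lemma: defect Ci}, which is consistent with $\clw_\emptyset = \cls$ in the above formula; and $\clp_1 = \clp_2$, which is trivial.)

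\textbf{Key step.} Since $\clp_1 \subseteq \clp_2$, the intersection defining $\clw_{\clp_2}$ runs over a larger collection, so $\clw_{\clp_2} \subseteq \clw_{\clp_1}$, and hence the projections satisfy $P_{\clw_{\clp_2}} \leq P_{\clw_{\clp_1}}$. Compressing by $M_{z_j}|_\clq$ on the left and $M_{z_j}^*$ (together with $P_\clq$) on the right preserves this inequality: for every $f \in \clq$,
\[
\la P_{\clq}M_{z_j}^*P_{\clw_{\clp_2}}M_{z_j} f, f\ra = \|P_{\clw_{\clp_2}} M_{z_j} f\|^2 \leq \|P_{\clw_{\clp_1}} M_{z_j} f\|^2 = \la P_{\clq}M_{z_j}^*P_{\clw_{\clp_1}}M_{z_j} f, f\ra.
\]
Therefore $D^2_{j,C,\clp_2} \leq D^2_{j,C,\clp_1}$, and since the square root is operator monotone, $D_{j,C,\clp_2} \leq D_{j,C,\clp_1}$, which is part (1). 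For part (2), the inequality $D^2_{j,C,\clp_2} \leq D^2_{j,C,\clp_1}$ together with the Douglas range-inclusion theorem gives $\overline{\text{ran}}\,D_{j,C,\clp_2} \subseteq \overline{\text{ran}}\,D_{j,C,\clp_1}$, i.e. $\cld_{j,C,\clp_2} \subseteq \cld_{j,C,\clp_1}$. Transporting back along the unitary equivalence $T \cong C$ yields the result for $T$.

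\textbf{Main obstacle.} There is no real obstacle here: the genuine work—the operator-theoretic identity \eqref{eqn: DjCP} and the positivity of the wandering-subspace projection differences from Lemma \ref{wandering}(2)—was already carried out in the proof of Theorem \ref{defect_1}. The only points requiring a little care are (i) checking that the formula \eqref{eqn: DjCP} is still valid (or has an obvious analogue) at the endpoint $\clp_1 = \emptyset$, so that the chain of inequalities covers that case, and (ii) recalling that operator monotonicity of the square root and Douglas' theorem transfer the $2$-level inequality to the defect operators and defect spaces respectively. Everything else is a one-line monotonicity argument.
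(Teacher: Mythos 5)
Your proof is correct, but it takes a different route from the paper's. The paper argues purely algebraically: it writes $\clp_2=\clp_1\sqcup\clp_3$, factors $D^2_{j,T,\clp_2}=\prod_{k\in\clp_3}(I-\Delta_{T_k})(D^2_{j,T,\clp_1})$, and peels off one factor $(I-\Delta_{T_k})$ at a time, noting that each peel subtracts the positive operator $T_k(\cdot)T_k^*$ applied to an operator that is itself positive by Theorem \ref{defect_1}, so the Loewner order decreases at every step. You instead invoke the geometric identity \eqref{eqn: DjCP}, $D^2_{j,C,\clp}=P_\clq M_{z_j}^*P_{\clw_\clp}M_{z_j}|_\clq$, and reduce everything to the trivial monotonicity $\clw_{\clp_2}\subseteq\clw_{\clp_1}\Rightarrow P_{\clw_{\clp_2}}\leq P_{\clw_{\clp_1}}$, which survives the compression $\la P_\clq M_{z_j}^* P_{\clw_\clp} M_{z_j}f,f\ra=\|P_{\clw_\clp}M_{z_j}f\|^2$. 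Both arguments ultimately rest on the same foundational material (the positivity analysis in Theorem \ref{defect_1}, which is where \eqref{eqn: DjCP} is derived), so the difference is one of packaging rather than substance; your version avoids the induction over $\clp_3$ at the cost of passing through the model-operator realization one more time. Two small points in your favour: you explicitly invoke operator monotonicity of the square root (L\"owner--Heinz) to pass from $D^2_{j,T,\clp_2}\leq D^2_{j,T,\clp_1}$ to the stated inequality $D_{j,T,\clp_2}\leq D_{j,T,\clp_1}$, and you justify the range inclusion in part (2) via Douglas' theorem --- the paper leaves both steps implicit. Your attention to the endpoint $\clp_1=\emptyset$ (where $\clw_\emptyset=\cls$ and \eqref{eqn: DjCP} reduces to Lemma \ref{lemma: defect Ci}) is also a worthwhile check that the paper sidesteps by working algebraically.
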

\begin{proof}
Without loss of generality, assume that $\clp_1 \subsetneq \clp_2$. We write $\clp_2=\clp_1 \sqcup \clp_3$, the disjoint union of $\clp_1$ and some nonempty subset $\clp_3$ of $I_n$. Recall that $\Delta_{T_p} \Delta_{T_q} = \Delta_{T_q} \Delta_{T_p}$ for all $p, q \in I_n$. Then
\[
D^2_{j,T,\clp_2} = \prod_{k\in \clp_2}(I-\Delta_{T_k})(D_{T_j}^2) = \prod_{k\in \clp_3}(I-\Delta_{T_k})\prod_{l\in \clp_1}(I-\Delta_{T_l})(D_{T_j}^2) =\prod_{k\in \clp_3}(I-\Delta_{T_k}) (D^2_{j,T,\clp_1}).
\]
If $k\in \clp_3$, then
\[
(I-\Delta_{T_k}) (D^2_{j,T,\clp_1})=D^2_{j,T,\clp_1} - T_kD^2_{j,T,\clp_1}T_k^*\leq D^2_{j,T,\clp_1}.
\]
Using this, by induction, we obtain
\[
D^2_{j,T,\clp_2}=\prod_{k\in \clp_3}(I-\Delta_{T_k}) (D^2_{j,T,\clp_1})\leq D^2_{j,T,\clp_1},
\]
which also shows $\cld_{j,T,\clp_2}\subseteq \cld_{j,T,\clp_1}$, and completes the proof.
\end{proof}

Specifically, we have:
	
\begin{cor}\label{lower bound}
Let $T \in\mathbb{S}_n^{B}(\clh)$. Then $D_{j,T}\leq D_{T_j}$ for all $j \in I_n$.
\end{cor}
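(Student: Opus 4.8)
The plan is to recognize this as the extreme case of Theorem \ref{ABC}. Unwinding the notation of \eqref{eqn: D jT} together with Definitions \ref{defn: trunc defect} and \ref{def: trunc def}, we have $D_{j,T} = D_{j,T,I_n\setminus\{j\}}$, and also $D_{T_j} = D_{j,T,\emptyset}$ since $D^2_{j,T,\emptyset} = D^2_{T_j}$ by definition. Taking $\clp_1 = \emptyset$ and $\clp_2 = I_n \setminus \{j\}$ in Theorem \ref{ABC} — which is admissible because $\emptyset \subseteq I_n \setminus \{j\}$ — part (1) of that theorem immediately gives $D_{j,T} = D_{j,T,I_n\setminus\{j\}} \leq D_{j,T,\emptyset} = D_{T_j}$, as desired.

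If one prefers a self-contained argument (essentially re-running the proof of Theorem \ref{ABC} from the base point $\clp_1 = \emptyset$), enumerate $I_n \setminus \{j\} = \{k_1, \ldots, k_m\}$ and set $A_0 = D^2_{T_j}$ and $A_r = (I - \Delta_{T_{k_r}})(A_{r-1})$ for $1 \leq r \leq m$, so that $A_r = D^2_{j,T,\{k_1,\ldots,k_r\}}$ and $A_m = D^2_{j,T}$. By Theorem \ref{defect_1} each $A_r \geq 0$; in particular $A_{r-1} \geq 0$ forces $T_{k_r} A_{r-1} T_{k_r}^* \geq 0$, whence $A_r = A_{r-1} - T_{k_r} A_{r-1} T_{k_r}^* \leq A_{r-1}$. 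Chaining these inequalities yields $D^2_{j,T} = A_m \leq A_0 = D^2_{T_j}$, and applying the operator monotone square root function gives $D_{j,T} \leq D_{T_j}$.

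There is no real obstacle here: the only points worth noting are that $\emptyset$ is a legitimate choice of $\clp_1$ in Theorem \ref{ABC} (equivalently, that the induction above starts from the positive operator $D^2_{T_j}$), and that the case $n = 1$ is trivial since then $I_n \setminus \{j\} = \emptyset$ and $D_{j,T} = D_{T_j}$ outright. Thus the corollary follows with essentially no additional work beyond the results already established.
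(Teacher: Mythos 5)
Your proof is correct and is exactly the paper's argument: the paper also sets $\clp_1 = \emptyset$ and $\clp_2 = I_n\setminus\{j\}$ and invokes Theorem \ref{ABC}. Your optional self-contained rerun of the induction (and your explicit remark about operator monotonicity of the square root, which the paper leaves implicit) adds nothing essentially new, so there is nothing further to address.
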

\begin{proof}
Set $\clp_1 = \emptyset$ and $\clp_2= I_n \setminus \{j\}$. Then $D^2_{j,T,\clp_1}=D_{T_j}^2$ and   $  D^2_{j,T,\clp_2}=D_{j,T}^2$. The result now directly follows from Theorem \ref{ABC}.
\end{proof}

In particular, if $D_{T_j}=0$ then $D_{j,T}=0$. The converse is also true, which we will address in Corollary \ref{cor: defect conv}.

Next, we fix a notation: Let $T \in \clb(\clh)^n_c$, $j \in I_n$, and let $\clp=\{p_1,\dots,p_r \} \subsetneq I_n \setminus\{j\}$. For each $\alpha=(\alpha_1\dots,\alpha_r)\in \mathbb{Z}^r_+$, we define
\[
T^{\alpha}_{\clp}:=T_{p_1}^{\alpha_1}\dots T_{p_r}^{\alpha_r},
\]
whenever $\clp \neq \emptyset$, and $T^{\alpha}_{\clp} = I_\clh$ if $\clp = \emptyset$. For all $j \in I_n$, we recall from \eqref{eqn: D jT} that
\[
D^2_{j,T} = \prod_{k \in I_n \setminus \{j\}} (I_{\clb(\clh)} - \Delta_{T_k})(D^2_{T_j}),
\]
and by Definition \ref{def: trunc def}, we have $\cld_{j,T}=\overline{\text{ran}}D_{j,T}$. With this notation, we present series expansions of squares of truncated defect operators. These are fundamental in representing commuting tuples of contractions.

\begin{prop}\label{generating sets}
Let $T \in \mathbb{S}_n^{B}(\clh)$, $j \in I_n$, and let $\clp \subsetneq I_n \setminus\{j\}$. Then
\begin{enumerate}
\item $D_{T_j}^2 = \displaystyle \sum_{\alpha\in \mathbb{Z}_+^{|\clp|}}T^{\alpha}_{\clp}D_{j,T,\clp}^2T^{*\alpha}_{\clp}$, and
\item $D^2_{j,T, \clp} = \displaystyle \sum_{\alpha\in\mathbb{Z}_+^{|\clp^{\prime}|}} T^{\alpha}_{\clp^{\prime}} D_{j,T}^2 T^{*\alpha}_{\clp^{\prime}}$, where $\clp^{\prime}= I_n \setminus (\clp\cup \{j\})$.
\end{enumerate}
\end{prop}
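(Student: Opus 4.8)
The plan is to reduce everything to the Hardy-space model and exploit the projection formula \eqref{eqn: DjCP} together with the wandering subspace calculus of Lemma \ref{wandering}. By Theorem \ref{Dilation Thm} and \eqref{eqn: T and CT} we may replace $T \in \mathbb{S}_n^B(\clh)$ by the tuple of model operators $C = (C_1,\dots,C_n)$ on a quotient module $\clq \subseteq H^2_{\cle_*}(\D^n)$, with $\cls = \clq^\perp$ a Beurling submodule. From the proof of Theorem \ref{defect_1} we already have, for any nonempty $\clp \subseteq I_n \setminus \{j\}$, the identity $D^2_{j,C,\clp} = P_{\clq} M_{z_j}^* P_{\clw_\clp} M_{z_j}|_\clq$, where $\clw_\clp = \bigcap_{i \in \clp}(\cls \ominus z_i \cls)$; and the edge cases $D^2_{j,C,\emptyset} = D_{C_j}^2 = P_\clq M_{z_j}^* P_{\cls} M_{z_j}|_\clq$ and $D^2_{j,C} = P_\clq M_{z_j}^* P_{\clw} M_{z_j}|_\clq$ with $\clw = \clw_{I_n \setminus \{j\}}$. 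So both sides of (1) and (2) are compressions to $\clq$ of $M_{z_j}^*(\,\cdot\,)M_{z_j}$ applied to projections onto nested wandering subspaces, and the task becomes an orthogonal-decomposition statement for those subspaces.

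The key step is a telescoping/orthogonal-sum decomposition of the relevant wandering subspace. For part (1), writing $\clp = \{p_1,\dots,p_r\}$, I claim
\[
\cls \ominus z_j \cls \text{ (or rather the appropriate space) } = \bigoplus_{\alpha \in \Z_+^{r}} z_\clp^\alpha\, \clw_{\clp \cup \{j\}}^{\,\flat},
\]
more precisely that iterating part (4) of Lemma \ref{wandering} — $\clw_\clq \ominus z_k \clw_\clq = \clw_{\clq \cup \{k\}}$, valid because each $\clw_\clq$ is $z_k$-shift-invariant by part (3) — yields, for any submodule-type space $\clv$ that is $z_k$-invariant for $k \in \clp$ with $M_{z_k}|_\clv$ a shift, the Wold-type orthogonal decomposition $\clv = \bigoplus_{\alpha \in \Z_+^{|\clp|}} z_\clp^\alpha\big(\bigcap_{k \in \clp}(\clv \ominus z_k \clv)\big)$. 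Applying this with $\clv = \cls$ peels off the coordinates in $\clp$ and leaves $\clw_\clp$ as the core; sandwiching $M_{z_j}^*(\,\cdot\,)M_{z_j}$ and compressing to $\clq$ converts $P_{z_\clp^\alpha \clw_\clp}$ into $C^\alpha_\clp\, D^2_{j,C,\clp}\, C^{*\alpha}_\clp$ via repeated use of $M_{z_k} P_X M_{z_k}^* = P_{z_k X}$, the doubly commuting relations $M_{z_j}^* M_{z_k} = M_{z_k} M_{z_j}^*$, and the reducibility in Lemma \ref{lemma: Q redu} (which allows the intervening $P_\clq$ to migrate outward, as in the proof of Theorem \ref{pure_isometry}). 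This gives (1). For part (2), the same Wold decomposition is applied instead to the space $\clw_\clp$ with respect to the remaining coordinates $\clp' = I_n \setminus (\clp \cup \{j\})$ — legitimate since $\clw_\clp$ is $z_k$-shift-invariant for each $k \in \clp'$ by Lemma \ref{wandering}(3) and Corollary \ref{cor: defect_1} — whose core is $\bigcap_{k \in \clp'}(\clw_\clp \ominus z_k \clw_\clp) = \clw_{\clp \cup \clp'} = \clw_{I_n \setminus \{j\}} = \clw$; compressing yields $D^2_{j,C,\clp} = \sum_{\alpha \in \Z_+^{|\clp'|}} C^\alpha_{\clp'} D^2_{j,C} C^{*\alpha}_{\clp'}$, which is (2).

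The main obstacle, and the one point requiring real care, is the convergence and legitimacy of the infinite Wold-type orthogonal sum over $\Z_+^{|\clp|}$ (resp. $\Z_+^{|\clp'|}$): I need that iterating $\clv \ominus z_k \clv = $ (core) $\oplus\, z_k(\clv \ominus z_k\clv)$ over several commuting shift coordinates genuinely exhausts $\clv$, with no residual $\bigcap_m z_\clp^{m} \clv$ left over. This is exactly a multivariable Wold/Słociński decomposition for the doubly commuting shifts $\{M_{z_k} : k \in \clp\}$ restricted to the invariant subspace $\clv$, and it holds because each $M_{z_k}|_\clv$ is a (pure) shift with $SOT\text{-}\lim M_{z_k}^{*m} = 0$ — a consequence of the same property for $M_{z_k}$ on $H^2_{\cle_*}(\D^n)$, inherited by invariant subspaces. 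Once that is pinned down (citing \cite[Corollary 2.3]{JayII} for the two-step identity and the standard doubly-commuting Wold decomposition of \cite{Slo}, or arguing directly by induction on $|\clp|$), the rest is bookkeeping with the projection identities already established. After proving the statement for $C$, transporting it back to $T$ via the unitary equivalence $T \cong C$ of \eqref{eqn: T and CT} is immediate, since all operators involved are polynomial expressions in $T$, $T^*$ and their defects.
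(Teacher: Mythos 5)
Your argument is correct, but it takes a genuinely different route from the paper's. The paper never returns to the function model for this proposition: it argues purely algebraically, writing $X=(I-\Delta_{T_r})(X)+T_rXT_r^*$ and telescoping to get $X=\sum_{k}T_r^k\big((I-\Delta_{T_r})(X)\big)T_r^{*k}$ (the tail $T_r^{N}XT_r^{*N}$ tends to $0$ in SOT because $T_r\in C_{\cdot 0}$), and then inducts on $|\clp|$, peeling off one index at a time; part (2) is the same telescoping applied to $D^2_{j,T}=\prod_{k\in\clp'}(I-\Delta_{T_k})(D^2_{j,T,\clp})$. You instead pass to the Beurling quotient module, invoke \eqref{eqn: DjCP}, and realize the series as the compression of the orthogonal decomposition $P_{\cls}=\sum_{\alpha}P_{z_\clp^\alpha\clw_\clp}$ (resp.\ $P_{\clw_\clp}=\sum_\alpha P_{z_{\clp'}^\alpha\clw_{\{j\}^c}}$). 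That decomposition is legitimate: since $\cls=\Theta H^2_{\cle}(\D^n)$ with $M_\Theta$ an isometry intertwining the shifts, it is just the transport of $H^2_{\cle}(\D^n)=\bigoplus_\alpha z_\clp^\alpha\big(\bigcap_{i\in\clp}\ker M_{z_i}^*\big)$, so the Wold-type convergence you single out as the main obstacle is actually the easy part. The step that deserves the most care in your route is the identification $P_\clq M_{z_j}^*P_{z_\clp^\alpha\clw_\clp}M_{z_j}|_\clq=C_\clp^\alpha D^2_{j,C,\clp}C_\clp^{*\alpha}$: after commuting $M_{z_\clp}^\alpha$ past $M_{z_j}^*$ you must insert and delete projections $P_\clq$ in the middle, which uses $z_i\cld_{C_j}\subseteq\cld_{C_j}$ (Theorem \ref{pure_isometry}(4), i.e.\ the Beurling hypothesis) on one side, and on the other the vanishing of $P_\clq M_{z_\clp}^\alpha P_\cls M_{z_j}^*P_{\clw_\clp}M_{z_j}|_\clq$ (which holds because $P_\cls M_{z_j}^*\Theta g=\Theta M_{z_j}^*g$ lands back in $\cls$). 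You have named the right tools for this, so the sketch closes. What each approach buys: the paper's is shorter, model-free at this stage, and works verbatim at the operator level; yours explains geometrically why the series identity holds --- it is literally a Wold decomposition of the Beurling submodule seen through $M_{z_j}^*(\cdot)M_{z_j}$ --- and makes Corollary \ref{more generating sets} transparent.
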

\begin{proof}
Recall, by definition, that $D^2_{j,T,\clp} = \displaystyle\prod_{i\in \clp}(I-\Delta_{T_i})  (D^2_{T_j})$. We prove $(1)$ by induction on $|\clp|$. If $\clp=\emptyset$, then nothing to prove. Suppose $\clp=\{i\}$ (so that $|\clp|=1$). Then
\[
D^2_{j,T,\clp}=(I-\Delta_{T_{i}})(D_{T_j}^2)=D_{T_j}^2- T_{i}D_{T_j}^2T_{i}^*,
\]
implies $D_{T_j}^2=D^2_{j,T,\clp}+T_{i}D_{T_j}^2T_{i}^*$. Repeating this identity results in the equality 
\[
D_{T_j}^2=\sum_{k\in \mathbb{Z}_+}T^k_{i}D_{j,T,\clp}^2T_{i}^{*k}=\sum_{k\in \mathbb{Z}^{|\clp|}_+}T^k_{\clp}D_{j,T,\clp}^2T_{\clp}^{*k}.
\]
As $T_i \in C_{\cdot 0}$, above series converges in the strong operator topology. Thus (1) is true for any $\clp$ with $|\clp|=1$. Now let $|\clp|=2$ and assume that $\clp=\{i_1,i_2\}$. In this case, we have
\[
\begin{split}
D^2_{j,T,\clp} & =(I-\Delta_{T_{i_2}})(I-\Delta_{T_{i_1}})(D_{T_j}^2)
\\
&=(I-\Delta_{T_{i_2}})(D_{T_j}^2)- (I-\Delta_{T_{i_2}})(T_{i_1}D_{T_j}^2T_{i_1}^*)
\\
&=(I-\Delta_{T_{i_2}})(D_{T_j}^2)- T_{i_1}\left( (I-\Delta_{T_{i_2}})(D_{T_j}^2)  \right)T_{i_1}^*.
\end{split}
\]
This shows that
\[
D^2_{j,T,\clp_1}=(I-\Delta_{T_{i_2}})(D_{T_j}^2)=  \sum_{k\in \mathbb{Z}_+}T^k_{i_1} 	D^2_{j,T,\clp}T^{*k}_{i_1},
\]
where $\clp_1=\{i_2\}$. As $|\clp_1|=1$, we get
\[
D_{T_j}^2=\sum_{k\in \mathbb{Z}^{|\clp_1|}_+}T^k_{\clp_1}D_{j,T,\clp_1}^2T_{\clp_1}^{*k}=\sum_{k\in \mathbb{Z}^{|\clp_1|}_+}    T^k_{\clp_1}  \left(\sum_{k\in \mathbb{Z}_+}  T^k_{i_1} 	D^2_{j,T,\clp}T^{*k}_{i_1}\right) T_{\clp_1}^{*k}=\sum_{k\in \mathbb{Z}^{|\clp|}_+}T^k_{\clp}D_{j,T,\clp}^2T_{\clp}^{*k},
\]
proving (2) for any $\clp$ with $|\clp|=2$. Finally, assume that (2) is true for any $\clp$ such that $|\clp|=l$. Let $\clp_+=\clp\cup \{r\}\subseteq I_n \setminus\{j\}$ and $|\clp_+|=l+1$.  Then
\[
\begin{split}
D^2_{j,T,\clp_+} & =  \prod_{i\in \clp_+}(I-\Delta_{T_i}) (D^2_{T_j})
\\
&=\prod_{i\in \clp}(I-\Delta_{T_i}) (I-\Delta_{T_r}) (D^2_{T_j})
\\
&=\prod_{i\in \clp}(I-\Delta_{T_i})(D^2_{T_j})-  T_r \left(\prod_{i\in \clp}(I-\Delta_{T_i})(D^2_{T_j})\right)T_r^*
\\
&=D_{j,T,\clp}^2-T_rD_{j,T,\clp}^2T_r^*,
\end{split}
\]
that is, $D_{j,T,\clp}^2=D^2_{j,T,\clp_+}+T_rD_{j,T,\clp}^2T_r^*$, and hence
\[
D_{j,T,\clp}^2=  \sum_{k\in \mathbb{Z}_+} T_r^k D^2_{j,T,\clp_+}T_r^{*k}.
\]
Based on our assumption, we have
\[
D_{T_j}^2 = \sum_{\alpha\in \mathbb{Z}_+^{|\clp|}}T^{\alpha}_{\clp}D_{j,T,\clp}^2T^{*\alpha}_{\clp} = \sum_{\alpha\in \mathbb{Z}_+^{|\clp|}}T^{\alpha}_{\clp}\left(\sum_{k\in \mathbb{Z}_+} T_r^k D^2_{j,T,\clp_+}T_r^{*k}\right)T^{*\alpha}_{\clp} = \sum_{\alpha\in \mathbb{Z}_+^{|\clp_+|}}T^{\alpha}_{\clp_+}D_{j,T,\clp_+}^2T^{*\alpha}_{\clp_+}.
\]
Thus (1) is true for any $\clp$ such that $|\clp|=l+1$. This completes the proof of part (1). For  part (2), consider  $I_n = \clp\cup \clp^{\prime}\cup \{j\}$. Then
\[
\begin{split}
D^2_{j,T}&=\prod_{m \in \{j\}^c} (I-\Delta_{T_m})(D_{T_j}^2) =\prod_{k\in \clp^{\prime}}(I-\Delta_{T_k})\prod_{l\in \clp} (I-\Delta_{T_l})(D_{T_j}^2) =\prod_{k\in \clp^{\prime}}(I-\Delta_{T_k})(D^2_{j,T, \clp}),
\end{split}
\]
and hence
\[
\sum_{\alpha\in\mathbb{Z}_+^{|\clp^{\prime}|}} T^{\alpha}_{\clp^{\prime}} D_{j,T}^2 T^{*\alpha}_{\clp^{\prime}}=\sum_{\alpha\in\mathbb{Z}_+^{|\clp^{\prime}|}  } T^{\alpha}_{\clp^{\prime}} \left(\prod_{k\in \clp^{\prime}}(I-\Delta_{T_k})(D^2_{j,T, \clp})\right) T^{*\alpha}_{\clp^{\prime}}=D^2_{j,T, \clp},
\]
which completes the proof of part (2) and the proposition.
\end{proof}

With the help of the above, we can now write down representations of truncated defect spaces of Beurling tuples.

\begin{cor}\label{more generating sets}
Let $T \in \mathbb{S}_n^{B}(\clh)$, $j \in I_n$, and let $\clp \subseteq I_n \setminus \{j\}$. Then
\begin{enumerate}
\item $\cld_{T_j} = \displaystyle \bigvee_{\alpha\in \mathbb{Z}_+^{|\clp|}}T^{\alpha}_{\clp}\cld_{j,T,\clp}$, and
\item $\cld_{j,T,\clp} = \displaystyle \bigvee_{\alpha\in \mathbb{Z}^{|\clp^{\prime}|}_+}  T^{\alpha}_{\clp^{\prime}} \cld_{j,T}$, where $\clp^{\prime}= I_n \setminus (\clp\cup \{j\})$.
\end{enumerate}
\end{cor}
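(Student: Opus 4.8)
The corollary is the ``square-root'' version of Proposition \ref{generating sets}, so the natural strategy is to take closures of ranges on both sides of the two operator identities proved there. The key technical fact I would invoke is the following standard consequence of Douglas' range-inclusion theorem (or a direct Cesàro/SOT-limit argument): if $A, B_\alpha \geq 0$ are positive operators on $\clh$ indexed by a countable set, with $A = \sum_\alpha X_\alpha B_\alpha X_\alpha^*$ converging in the strong operator topology, then $\overline{\text{ran}}\, A^{1/2} = \bigvee_\alpha \overline{\text{ran}}(X_\alpha B_\alpha^{1/2}) = \bigvee_\alpha \overline{X_\alpha \overline{\text{ran}}\, B_\alpha^{1/2}}$. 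Indeed, $\la A h, h\ra = \sum_\alpha \|B_\alpha^{1/2} X_\alpha^* h\|^2$, so $h \perp \text{ran}\, A^{1/2}$ iff $Ah = 0$ iff $B_\alpha^{1/2} X_\alpha^* h = 0$ for every $\alpha$ iff $h \perp X_\alpha \,\text{ran}\, B_\alpha^{1/2}$ for every $\alpha$; taking orthogonal complements gives the claim.

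\textbf{Step 1 (part (1)).} Apply Proposition \ref{generating sets}(1): for $T \in \mathbb{S}_n^B(\clh)$, $j \in I_n$ and $\clp \subsetneq I_n \setminus \{j\}$ we have $D_{T_j}^2 = \sum_{\alpha \in \mathbb{Z}_+^{|\clp|}} T_\clp^\alpha D_{j,T,\clp}^2 T_\clp^{*\alpha}$ with SOT-convergence (which the proof there establishes, using $T_i \in C_{\cdot 0}$ for $i \in \clp$). Each summand is of the form $X_\alpha B_\alpha X_\alpha^*$ with $X_\alpha = T_\clp^\alpha$ and $B_\alpha = D_{j,T,\clp}^2 \geq 0$ (positivity by Theorem \ref{defect_1}), so the lemma above yields $\cld_{T_j} = \overline{\text{ran}}\, D_{T_j} = \bigvee_\alpha \overline{T_\clp^\alpha \,\text{ran}\, D_{j,T,\clp}} = \bigvee_\alpha T_\clp^\alpha \cld_{j,T,\clp}$, where in the last step I use that $T_\clp^\alpha \cld_{j,T,\clp} \subseteq \cld_{j,T,\clp}$ is already closed --- this invariance follows from iterating Corollary \ref{cor: defect_1}(2), since each index appearing in $\clp$ lies in $I_n \setminus (\clp \cup \{j\})$... wait, that needs a moment's care; more simply, $T_\clp^\alpha \cld_{j,T,\clp}$ need not itself be closed, but $\bigvee_\alpha \overline{T_\clp^\alpha \cld_{j,T,\clp}} = \bigvee_\alpha T_\clp^\alpha \cld_{j,T,\clp}$ since the span-closure absorbs the individual closures anyway. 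The boundary case $\clp = \emptyset$ is trivial: $\cld_{j,T,\emptyset} = \cld_{T_j}$. For $\clp = I_n \setminus \{j\}$ one argues identically using the SOT-convergent expansion; the only difference is notational.

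\textbf{Step 2 (part (2)).} Identical in spirit, now applied to Proposition \ref{generating sets}(2): $D_{j,T,\clp}^2 = \sum_{\alpha \in \mathbb{Z}_+^{|\clp'|}} T_{\clp'}^\alpha D_{j,T}^2 T_{\clp'}^{*\alpha}$ with $\clp' = I_n \setminus (\clp \cup \{j\})$, again SOT-convergent since the relevant $T_i \in C_{\cdot 0}$. Apply the same range-closure lemma with $B_\alpha = D_{j,T}^2 \geq 0$ to get $\cld_{j,T,\clp} = \bigvee_{\alpha \in \mathbb{Z}_+^{|\clp'|}} T_{\clp'}^\alpha \cld_{j,T}$.

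\textbf{Main obstacle.} There is no deep obstacle here --- the corollary is essentially ``take $\overline{\text{ran}}(\cdot)^{1/2}$'' applied to the previous proposition, and the paper explicitly signals this by saying the proof ``follows from the above.'' The one point requiring genuine care is the passage from the operator identity to the range identity: one must know the sum converges strongly (so that $\la A h, h\ra = \sum_\alpha \|B_\alpha^{1/2} X_\alpha^* h\|^2$ is legitimate and each term vanishes when the total does), and one must use positivity of $D_{j,T,\clp}^2$ and $D_{j,T}^2$ (Theorem \ref{defect_1}) to make sense of their square roots in the first place. Both ingredients are already in hand, so the proof is short.
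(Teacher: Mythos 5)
Your proposal is correct and follows essentially the same route as the paper: the paper proves the inclusion $\bigvee_\alpha T^{\alpha}_{\clp}\cld_{j,T,\clp}\subseteq \cld_{T_j}$ via the operator inequality and Douglas' theorem and then gets the reverse inclusion from the identity $\|D_{T_j}h\|^2=\sum_{\alpha}\|D_{j,T,\clp}T^{*\alpha}_{\clp}h\|^2$, which is exactly the sum-of-squares computation your range-closure lemma packages into a single orthogonal-complement argument. Your added care about SOT-convergence, the closedness of the individual pieces, and the boundary cases $\clp=\emptyset$ and $\clp=I_n\setminus\{j\}$ is sound and slightly more explicit than the paper's treatment.
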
	
\begin{proof}
By Proposition \ref{generating sets}, we know that
\[
D_{T_j}^2=\sum_{\alpha\in \mathbb{Z}_+^{|\clp|}}T^{\alpha}_{\clp}D_{j,T,\clp}^2T^{*\alpha}_{\clp}\geq T^{\alpha}_{\clp}D_{j,T,\clp}^2T^{*\alpha}_{\clp},
\]
which, in particular, guarantees $T^{\alpha}_{\clp^{\prime}} \cld_{j,T}\subseteq \cld_{T_j}$ for any $\alpha\in \mathbb{Z}_+^{|\clp|}$, and consequently
\[
\bigvee_{\alpha\in \mathbb{Z}_+^{|\clp|}}T^{\alpha}_{\clp}\cld_{j,T,\clp}\subseteq \cld_{T_j}.
\]
For the reverse inclusion, let $h\in \cld_{T_j}$ and suppose $h\perp T^{\alpha}_{\clp}\cld_{j,T,\clp}$ for all $\alpha\in \mathbb{Z}_+^{|\clp|}$. Equivalently, $ T^{*\alpha}_{\clp}h\perp \cld_{j,T,\clp}$ for all $\alpha\in \mathbb{Z}_+^{|\clp|}$. So
\[
D_{j,T,\clp}T^{*\alpha}_{\clp}h=0,
\]
for all  $\alpha\in \mathbb{Z}_+^{|\clp|}$. By this and Proposition \ref{generating sets}, we get
\[
\|D_{T_j}h\|^2 = \sum_{\alpha\in \mathbb{Z}_+^{|\clp|}}\|D_{j,T,\clp}T^{*\alpha}_{\clp}h\|^2 = 0,
\]
and hence $h\in \cld_{T_j} \cap \ker D_{T_j} =\{0\}$. This shows that $\cld_{T_j}= \displaystyle \vee_{\alpha\in \mathbb{Z}_+^{|\clp|}}T^{\alpha}_{\clp}\cld_{j,T,\clp}$. The remaining part can be proved in a similar way.
\end{proof}

In particular, when $\clp = I_n\setminus \{j\}$, we have the following useful representations of defect spaces:
	
\begin{cor}\label{span_defect}
Let $T \in\mathbb{S}_n^{B}(\clh)$ and let $j \in I_n$. Then
\[
\cld_{T_j} = \bigvee_{\alpha\in \mathbb{Z}_+^{n-1}} T^{\alpha}_{I_n \setminus \{j\}} \cld_{j,T}.
\]
\end{cor}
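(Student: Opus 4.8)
The plan is to derive this as the special case $\clp = I_n \setminus \{j\}$ of Corollary \ref{more generating sets}, so the task reduces to checking that the two sides match under this substitution. First I would observe that when $\clp = I_n \setminus \{j\}$, part (1) of Corollary \ref{more generating sets} reads
\[
\cld_{T_j} = \bigvee_{\alpha \in \mathbb{Z}_+^{|\clp|}} T^{\alpha}_{\clp} \cld_{j,T,\clp},
\]
and it remains only to identify $\clp$, $|\clp|$, and $\cld_{j,T,\clp}$ with the corresponding data in the statement to be proved. Indeed, $\clp = I_n \setminus \{j\}$ is precisely the index set appearing in $T^{\alpha}_{I_n \setminus \{j\}}$, and $|\clp| = |I_n \setminus \{j\}| = n - 1$, so the multi-index $\alpha$ ranges over $\mathbb{Z}_+^{n-1}$ exactly as required.

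Next I would invoke the notational conventions established just before the statement of Proposition \ref{generating sets}: for $\clp = I_n \setminus \{j\}$ we have by definition (see \eqref{eqn: D jT} and Definition \ref{def: trunc def}) that
\[
D^2_{j,T,\,I_n \setminus \{j\}} = D^2_{j,T}, \qquad \cld_{j,T,\,I_n\setminus\{j\}} = \overline{\text{ran}}\,D_{j,T} = \cld_{j,T}.
\]
Substituting this identification into the displayed formula above yields exactly
\[
\cld_{T_j} = \bigvee_{\alpha \in \mathbb{Z}_+^{n-1}} T^{\alpha}_{I_n \setminus \{j\}} \cld_{j,T},
\]
which is the assertion of the corollary. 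One should also note that the hypothesis $\clp \subseteq I_n \setminus \{j\}$ of Corollary \ref{more generating sets} is satisfied (with equality), so the cited result applies without modification, and the positivity of $D^2_{j,T}$ needed to make sense of $\cld_{j,T}$ is guaranteed by Theorem \ref{defect_1}.

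There is essentially no obstacle here: the proof is a direct specialization, and the only thing to be careful about is matching the bookkeeping of index sets and the already-fixed abbreviations $D_{j,T}$ and $\cld_{j,T}$ for the case $\clp = I_n \setminus \{j\}$. I expect the write-up to be a single short paragraph citing Corollary \ref{more generating sets} with $\clp = I_n \setminus \{j\}$ and recalling the relevant definitions.
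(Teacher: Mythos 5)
Your proposal is correct and matches the paper exactly: the paper presents this corollary as the immediate specialization of Corollary \ref{more generating sets} to $\clp = I_n \setminus \{j\}$, with no further argument beyond the notational identifications $|\clp| = n-1$ and $\cld_{j,T,\,I_n\setminus\{j\}} = \cld_{j,T}$ that you carry out. Nothing is missing.
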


 
As a result, the converse of Corollary \ref{lower bound} also holds good.
 	
\begin{cor}\label{cor: defect conv}
Let $T \in\mathbb{S}_n^{B}(\clh)$. Then $D_{T_j}=0$ if and only if $D_{j,T}=0$.
\end{cor}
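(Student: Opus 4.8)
The plan is to leverage Corollary \ref{span_defect}, which gives the representation $\cld_{T_j} = \bigvee_{\alpha \in \mathbb{Z}_+^{n-1}} T^\alpha_{I_n \setminus \{j\}} \cld_{j,T}$. One direction is immediate: if $D_{j,T} = 0$, then $\cld_{j,T} = \{0\}$, so the span on the right-hand side is trivial, forcing $\cld_{T_j} = \{0\}$, i.e. $D_{T_j} = 0$. (This direction is also just the content of Corollary \ref{lower bound}, since $D_{j,T} \leq D_{T_j}$ is not the relevant inequality here — rather $D_{j,T}=0$ trivially, but the span representation makes it cleanest.)

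For the converse, suppose $D_{T_j} = 0$, equivalently $\cld_{T_j} = \{0\}$. By Corollary \ref{span_defect}, $\{0\} = \cld_{T_j} = \bigvee_{\alpha} T^\alpha_{I_n \setminus \{j\}} \cld_{j,T}$. Taking $\alpha = 0$ in the join, we get $\cld_{j,T} \subseteq \cld_{T_j} = \{0\}$, hence $D_{j,T} = 0$.

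So the proof is essentially a one-line consequence of Corollary \ref{span_defect}: both vanishing statements are equivalent to the triviality of the same spanning family, and the $\alpha = 0$ term of that family is exactly $\cld_{j,T}$. I would write it as: ``By Corollary \ref{span_defect}, $\cld_{T_j} = \bigvee_{\alpha\in \mathbb{Z}_+^{n-1}} T^{\alpha}_{I_n \setminus \{j\}} \cld_{j,T}$. Since the term corresponding to $\alpha = 0$ is $\cld_{j,T}$ itself, we have $\cld_{j,T} \subseteq \cld_{T_j}$, and conversely each summand $T^{\alpha}_{I_n\setminus\{j\}}\cld_{j,T}$ is contained in $\overline{\mathrm{ran}}\,T^{\alpha}_{I_n\setminus\{j\}}\big|_{\cld_{j,T}}$, so $\cld_{T_j} = \{0\}$ forces $\cld_{j,T} = \{0\}$ and vice versa. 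Equivalently, $D_{T_j} = 0 \iff \cld_{T_j} = \{0\} \iff \cld_{j,T} = \{0\} \iff D_{j,T} = 0$.'' There is essentially no obstacle here; the only thing to be careful about is phrasing the ``vice versa'' cleanly, since the backward implication ($D_{j,T}=0 \Rightarrow D_{T_j}=0$) needs the full span to collapse, which it does because every generator $T^\alpha_{I_n\setminus\{j\}}\cld_{j,T}$ is the image of $\cld_{j,T} = \{0\}$.
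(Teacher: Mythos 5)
Your argument is correct and takes essentially the paper's route: Corollary \ref{span_defect} (equivalently Proposition \ref{generating sets}) makes both implications immediate, the $\alpha=0$ term giving $\cld_{j,T}\subseteq\cld_{T_j}$ for one direction and the vanishing of every generator $T^{\alpha}_{I_n\setminus\{j\}}\cld_{j,T}$ giving the other. One small slip in your parenthetical: Corollary \ref{lower bound}, i.e.\ $D_{j,T}\le D_{T_j}$, yields the implication $D_{T_j}=0\Rightarrow D_{j,T}=0$, not the direction you attach it to there — but your span-based argument does not rely on that remark, so the proof stands.
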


Recall that if $T \in\mathbb{S}_n^{B}(\clh)$, then $D_{T_i} D_{T_j} = 0$ for all $i \neq j$. We have in addition that:

\begin{cor}\label{product_0}
If $T \in\mathbb{S}_n^{B}(\clh)$, then $D_{i,T} D_{j,T}=0$ for all $i\neq j$.
\end{cor}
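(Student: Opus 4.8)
The plan is to reduce the claim to the already-established fact that $D_{T_i}D_{T_j}=0$ for Beurling tuples, using the generating-set descriptions of the defect spaces from Corollary \ref{span_defect}. The key observation is that $\cld_{i,T}$ sits inside $\cld_{T_i}$ (Corollary \ref{lower bound}), and conversely $\cld_{T_j}$ is spanned by the images of $\cld_{j,T}$ under the monomials $T^{\alpha}_{I_n\setminus\{j\}}$ (Corollary \ref{span_defect}). So if I can move the operator $D_{i,T}$ past these monomials, the orthogonality $\cld_{T_i}\perp\cld_{T_j}$ will force $D_{i,T}D_{j,T}=0$.

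First I would fix $i\neq j$ and work with the tuple of model operators $C=(C_1,\ldots,C_n)$ on a quotient module $\clq\subseteq H^2_{\cle_*}(\D^n)$ in place of $T$, as has been done throughout this section. Recall from \eqref{eqn: DjCP} that $D^2_{i,C}=P_{\clq}M_{z_i}^*P_{\clw}M_{z_i}|_{\clq}$ where $\clw=\bigcap_{k\neq i}(\cls\ominus z_k\cls)$ and $\cls=\clq^\perp$; in particular $\cld_{i,C}=\overline{\text{ran}}(P_{\clq}M_{z_i}^*P_{\clw})$, so every vector of $\cld_{i,C}$ has the form $P_{\clq}M_{z_i}^*\xi$ with $\xi\in\clw$. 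The goal is then to show $\la P_{\clq}M_{z_i}^*\xi,\, g\ra=0$ for every $g\in\cld_{j,C}$, equivalently $P_{\clq}M_{z_i}^*\xi\perp\cld_{j,C}$. Now by Corollary \ref{span_defect}, $\cld_{j,C}\subseteq\cld_{C_j}$, and $\cld_{C_j}=\bigvee_{\alpha}C^{\alpha}_{I_n\setminus\{j\}}\cld_{j,C}$; but more to the point $\cld_{j,C}\subseteq\cld_{C_j}=\overline{\text{ran}}(P_\clq M_{z_j}^* P_\cls)$ by Lemma \ref{lemma: defect Ci}. So it suffices to check that $P_\clq M_{z_i}^*\xi$ is orthogonal to $P_\clq M_{z_j}^*\eta$ for all $\xi\in\clw$ and $\eta\in\cls$ — i.e., that $\la M_{z_i}^*\xi,\, P_\clq M_{z_j}^*\eta\ra=0$. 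Since $\xi,\eta\in\cls$ and $M_{z_j}^*\eta\in\cls$ (as $\cls$ is backward-shift invariant), and $M_{z_i}^*\xi=P_\cls M_{z_i}^*\xi + P_\clq M_{z_i}^*\xi$... Actually the cleaner route: use that $D^2_{i,C}D^2_{j,C}=0$ would follow from a wandering-subspace orthogonality. Let me instead show $\cld_{i,C}\perp\cld_{C_j}$ directly. We have $\cld_{i,C}=\overline{\text{ran}}(P_\clq M_{z_i}^*|_{\clw})$ with $\clw\subseteq\cls\ominus z_j\cls$, while by a symmetric argument applied with the roles of the indices swapped, $\cld_{j,C}=\overline{\text{ran}}(P_\clq M_{z_j}^*|_{\clw'})$ with $\clw'\subseteq\cls\ominus z_i\cls$. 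Then for $\xi\in\clw$, $\eta\in\clw'$:
\[
\la P_\clq M_{z_i}^*\xi,\, P_\clq M_{z_j}^*\eta\ra = \la M_{z_i}^*\xi,\, M_{z_j}^*\eta\ra - \la P_\cls M_{z_i}^*\xi,\, P_\cls M_{z_j}^*\eta\ra.
\]
Using doubly commuting, $\la M_{z_i}^*\xi, M_{z_j}^*\eta\ra = \la M_{z_j} M_{z_i}^*\xi,\eta\ra = \la M_{z_i}^* M_{z_j}\xi,\eta\ra = \la M_{z_j}\xi, M_{z_i}\eta\ra$; since $\eta\perp z_i\cls$ and $M_{z_j}\xi\in z_j\cls\subseteq\cls$, and $M_{z_i}\eta\in z_i\cls$, this pairing is $\la M_{z_j}\xi, M_{z_i}\eta\ra$ where $M_{z_i}\eta\in z_i\cls$; but $M_{z_j}\xi\in\cls$ need not be in $z_i\cls$, so I need to also handle the second term. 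The second term: $P_\cls M_{z_i}^*\xi$ and $P_\cls M_{z_j}^*\eta$ both lie in $\cls$... I would complete this bookkeeping so that the two terms cancel, yielding the orthogonality.

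An alternative, possibly slicker, approach avoids coordinates entirely: apply Corollary \ref{more generating sets} (part 2) to write $\cld_{C_i}=\bigvee_\alpha C^\alpha_{I_n\setminus\{i\}}\cld_{i,C}$ and likewise for $\cld_{C_j}$; since $\cld_{C_i}\perp\cld_{C_j}$ (which follows from $D_{C_i}D_{C_j}=0$, part of the Beurling hypothesis) and $\cld_{i,C}\subseteq\cld_{C_i}$, $\cld_{j,C}\subseteq\cld_{C_j}$ by Corollary \ref{lower bound}, one immediately gets $\cld_{i,C}\perp\cld_{j,C}$, hence $D_{i,C}D_{j,C}=0$. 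This reduces the whole corollary to the two already-proved ingredients and is likely the intended one-line argument. The main obstacle is simply making sure the inclusion $\cld_{i,T}\subseteq\cld_{T_i}$ (Corollary \ref{lower bound}) is genuinely enough: since the defect operators are positive self-adjoint and $\cld_{T_i}\perp\cld_{T_j}$ means $D_{T_i}D_{T_j}=0$, and $\cld_{i,T}\subseteq\cld_{T_i}$, $\cld_{j,T}\subseteq\cld_{T_j}$ gives $D_{i,T}D_{j,T}=0$ because the ranges of $D_{i,T}$ and $D_{j,T}$ are then orthogonal — I would just spell out this last implication cleanly (orthogonality of ranges of two positive operators forces the product to vanish, using self-adjointness).
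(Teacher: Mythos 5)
Your ``alternative, possibly slicker'' argument at the end is exactly the paper's proof: $\cld_{i,T}\subseteq\cld_{T_i}$ and $\cld_{j,T}\subseteq\cld_{T_j}$ (from Corollary \ref{span_defect}/\ref{lower bound}), combined with $D_{T_i}D_{T_j}=0$ from the Beurling hypothesis, forces the ranges of the positive operators $D_{i,T}$ and $D_{j,T}$ to be orthogonal, hence $D_{i,T}D_{j,T}=0$. The long coordinate computation in your first paragraph is unnecessary and was left unfinished, but the closing reduction is complete and correct, so you have the intended proof.
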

\begin{proof}
By Corollary \ref{span_defect}, we know that $\cld_{i,T}\subseteq \cld_{T_i}$ and $\cld_{j,T}\subseteq \cld_{T_j}$. Hence by Theroem \ref{Beurling_qm}, the result follows.
\end{proof}

As pointed out, truncated defect operators will be placed at the diagonal of the defect operators of the second kind. For the off-diagonal entries, in the following section, we will construct a new notion of commutators.

\section{Joint commutators}\label{sec: joint comm}

In this section, we introduce a new idea of commutators that will be parameterized by subsets of $I_n$ of a cardinality two. This is part of the search for suitable defect operators and spaces (of the second kind) of Beurling tuples. Recall that the commutator of two bounded linear operators $A$ and $B$ on $\clh$ is defined by
\[
[A,B]=AB-BA.
\]
We now come to yet another key notion that appears to bear some more significance in multivariable operator theory. 
 

\begin{defn}\label{def: joint comm}
Let $T \in \clb(\clh)^n_c$. For each $i, j \in I_n$, $n > 2$, with $i\neq j$, the $(i,j)$-th joint commutator of $T$ is defined by
\[
\delta_{ij}(T) = \prod_{k \in \{i,j\}^c}(I-\Delta_{T_k})([T_j,T_i^*]).
\]
If $n=2$, then we set $\delta_{ij}(T) = [T_j,T_i^*]$. We will simply say this as a joint commutator whenever $i$ and $j$ are clear from the context.
\end{defn}

We aim to prove that the function $\delta_{ij}: \clb(\clh) \raro \clb(\clh)$ involves wandering subspaces. Recall that for a submodule $\cls \subseteq H^2_{\cle_*}(\D^n)$, the wandering subspace $\clw_\clp$, for all nonempty $\clp \subseteq I_n$, is defined by
\begin{equation}\label{eqn: W_P}
\clw_{\clp}=\bigcap_{i\in \clp}(\cls \ominus z_i\cls).
\end{equation}
We also shorten the notation by expressing $\clw = \clw_{I_n}$, and
\[
\clw_{\{j\}^c} = \clw_{I_n \setminus \{j\}} \qquad (j \in I_n).
\]

\begin{prop}\label{corss_commutator}
Let $n \geq 2$, and let $\clq$ be a Beurling quotient module of $H^2_{\cle_*}(\D^n)$.
Then
\[
[C_j,C_i^*] = P_\clq M_{z_i}^* P_\cls M_{z_j}|_\clq,
\]
and
\[
\delta_{ij}(C) = P_{\clq}M_{z_i}^*P_{\clw_{\clp_{i,j}}}M_{z_j}|_{\clq}.
\]
for all $i \neq j$, where $\clp_{i,j} = I_n \setminus \{i,j\}$.
\end{prop}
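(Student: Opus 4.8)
The plan is to mirror, in the setting of the joint commutator, the computations already carried out for the truncated defect operators in the proof of Theorem \ref{defect_1} and Corollary \ref{cor: defect_1}. Throughout, write $\cls = \clq^\perp$, so that $\cls = \Theta H^2_{\cle}(\D^n)$ for some inner $\Theta$, and recall the model operators $C_i = P_\clq M_{z_i}|_\clq$.

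First I would establish the formula for the plain commutator $[C_j, C_i^*]$ when $i \neq j$. Starting from $C_j = P_\clq M_{z_j}|_\clq$ and $C_i^* = P_\clq M_{z_i}^*|_\clq$, compute $C_j C_i^* = P_\clq M_{z_j} P_\clq M_{z_i}^*|_\clq = P_\clq M_{z_j}(I - P_\cls)M_{z_i}^*|_\clq = P_\clq M_{z_j} M_{z_i}^*|_\clq - P_\clq M_{z_j} P_\cls M_{z_i}^*|_\clq$, and similarly $C_i^* C_j = P_\clq M_{z_i}^* M_{z_j}|_\clq - P_\clq M_{z_i}^* P_\cls M_{z_j}|_\clq$. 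Using the doubly commuting relation $M_{z_j} M_{z_i}^* = M_{z_i}^* M_{z_j}$ the two "free" terms cancel, and using $P_\cls M_{z_j}^*P_\cls = P_\cls M_{z_j}^*$ together with $P_\clq M_{z_j} P_\cls = 0$ (since $\cls$ is $M_{z_j}$-invariant, $M_{z_j}P_\cls \clh \subseteq \cls$, so $P_\clq M_{z_j} P_\cls = 0$) one sees $P_\clq M_{z_j} P_\cls M_{z_i}^*|_\clq = 0$. Hence $[C_j, C_i^*] = P_\clq M_{z_i}^* P_\cls M_{z_j}|_\clq$, as claimed. (One must be a little careful here: the cleanest route is to note $C_j C_i^* - C_i^* C_j = P_\clq M_{z_j} M_{z_i}^*|_\clq - P_\clq M_{z_j} P_\cls M_{z_i}^*|_\clq - P_\clq M_{z_i}^* M_{z_j}|_\clq + P_\clq M_{z_i}^* P_\cls M_{z_j}|_\clq$ and invoke commutativity for the cancellation and the invariance of $\cls$ to kill the term $P_\clq M_{z_j} P_\cls M_{z_i}^*|_\clq$.)

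Next I would compute the effect of applying $(I - \Delta_{C_k})$ to this expression for $k \in \{i,j\}^c$. Exactly as in Theorem \ref{defect_1}, using Lemma \ref{lemma: Q redu} to move $P_\clq$ past $M_{z_k}^* P_\cls M_{z_k}$, the doubly commuting relations, and the identity $M_{z_k} P_\cls M_{z_k}^* = P_{z_k \cls}$ from \eqref{eqn: proj onto zS}, one gets $C_k [C_j, C_i^*] C_k^* = P_\clq M_{z_i}^* P_{z_k \cls} M_{z_j}|_\clq$, whence $(I - \Delta_{C_k})([C_j, C_i^*]) = P_\clq M_{z_i}^* (P_\cls - P_{z_k \cls}) M_{z_j}|_\clq = P_\clq M_{z_i}^* P_{\cls \ominus z_k \cls} M_{z_j}|_\clq$, using \eqref{eqn: S minus zS}. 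Iterating over $k \in \clp_{i,j} = I_n \setminus \{i,j\}$, the projections telescope via part (2) of Lemma \ref{wandering} (each successive wandering subspace $z_k$-invariance makes $P_{\clw} - P_{z_k \clw}$ a projection onto an intersection), and by induction one arrives at $\delta_{ij}(C) = \prod_{k \in \clp_{i,j}}(I - \Delta_{C_k})([C_j, C_i^*]) = P_\clq M_{z_i}^* P_{\clw_{\clp_{i,j}}} M_{z_j}|_\clq$, where $\clw_{\clp_{i,j}} = \bigcap_{k \in \clp_{i,j}}(\cls \ominus z_k \cls)$. The case $n = 2$ is the base case $\clp_{i,j} = \emptyset$, where $\clw_\emptyset = \cls$ and $\delta_{ij}(C) = [C_j, C_i^*] = P_\clq M_{z_i}^* P_\cls M_{z_j}|_\clq$ directly from the first step.

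The main obstacle I anticipate is bookkeeping rather than conceptual: making sure the commutativity manipulations moving $M_{z_k}^*$, $M_{z_k}$ past $M_{z_i}^*$ and $M_{z_j}$ are all legitimate (they are, since $k$, $i$, $j$ are distinct and the shifts doubly commute), that the term $P_\clq M_{z_j} P_\cls M_{z_i}^*|_\clq$ genuinely vanishes (it does, by invariance of $\cls$ under $M_{z_j}$), and that the induction telescoping the projections is set up with the right indexing — essentially the same induction that powers Theorem \ref{defect_1}, so I would cite that argument rather than repeat it in full. A small subtlety worth flagging explicitly is that $\clw_{\clp_{i,j}}$ is only defined for nonempty $\clp_{i,j}$, which forces the separate treatment of $n=2$; the statement's hypothesis $n \geq 2$ together with the convention $\delta_{ij}(C) := [C_j, C_i^*]$ for $n=2$ handles this cleanly.
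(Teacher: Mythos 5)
Your proposal is correct and follows essentially the same route as the paper: the same expansion of $C_jC_i^*-C_i^*C_j$ using $P_\clq M_{z_j}P_\cls=0$ and double commutativity to isolate $P_\clq M_{z_i}^*P_\cls M_{z_j}|_\clq$, followed by the same telescoping of $(I-\Delta_{C_k})$ via $M_{z_k}P_\cls M_{z_k}^*=P_{z_k\cls}$ and $P_\cls-P_{z_k\cls}=P_{\cls\ominus z_k\cls}$, with $n=2$ handled by the convention $\delta_{ij}(C)=[C_j,C_i^*]$. The only quibble is that the step removing the inner $P_\clq$ rests on the submodule invariance $P_\clq M_{z_k}P_\cls=0$ (as you yourself use elsewhere) rather than on Lemma \ref{lemma: Q redu}, but this does not affect the argument.
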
	

\begin{proof}
Pick $i, j \in I_n$, and assume that $i \neq j$. We begin simplifying the commutator. In view of $P_\clq M_{z_j} P_\clq = P_\clq M_{z_j}$ and $M_{z_j} M_{z_i}^* = M_{z_i}^* M_{z_j} M_{z_i}^*$, we have
\[
\begin{split}
[C_j,C_i^*] & = C_j C_i^* - C_i^* C_j
\\
& = P_\clq M_{z_j} P_\clq M_{z_i}^*|_\clq - P_\clq M_{z_i}^* P_\clq M_{z_j}|_\clq\\
& = P_\clq M_{z_j} M_{z_i}^*|_\clq - P_\clq M_{z_i}^* M_{z_j}|_\clq + P_\clq M_{z_i}^* P_\cls M_{z_j}|_\clq
\\
& = P_\clq M_{z_i}^* P_\cls M_{z_j}|_\clq.
\end{split}
\]
Now, we pick $k \in I_n$, and assume that $k\neq i,j$. Then
\[
\begin{split}
(I-\Delta_{C_k})([C_j,C_i^*])&=[C_j,C_i^*]-C_k[C_j,C_i^*]C_k^*
\\
&=P_{\clq}M_{z_i}^*P_{\cls}M_{z_j}|_{\clq}-P_{\clq}M_{z_k}P_{\clq}M_{z_i}^*P_{\cls}M_{z_j}M_{z_k}^*|_{\clq}
\\
&=P_{\clq}M_{z_i}^*P_{\cls}M_{z_j}|_{\clq}-P_{\clq}M_{z_k} (I-P_\cls)M_{z_i}^*P_{\cls}M_{z_j}M_{z_k}^*|_{\clq}
\\
&=P_{\clq}M_{z_i}^*P_{\cls}M_{z_j}|_{\clq}-P_{\clq}M_{z_k}M_{z_i}^*P_{\cls}M_{z_j}M_{z_k}^*|_{\clq},
\end{split}
\]
as $P_\clq M_{z_k} P_\cls = 0$. This further simplifies our computation to
\[
\begin{split}
(I-\Delta_{C_k})([C_j,C_i^*])& = P_{\clq}M_{z_i}^*P_{\cls}M_{z_j}|_{\clq}-P_{\clq} M_{z_i}^*M_{z_k} P_{\cls} M_{z_k}^*M_{z_j}|_{\clq}
\\
&=P_{\clq}M_{z_i}^*P_{\cls}M_{z_j}|_{\clq}-P_{\clq}M_{z_i}^*P_{z_k\cls}M_{z_j}|_{\clq}
\\
&=P_{\clq}M_{z_i}^*(P_{\cls}-P_{z_k\cls})M_{z_j}|_{\clq}
\\
&=P_{\clq}M_{z_i}^*P_{\cls\ominus z_k\cls}M_{z_j}|_{\clq}.
\end{split}
\]
In the above, we have used the fact that $M_{z_k} P_{\cls} M_{z_k}^*$ is a projection and $M_{z_k} P_{\cls} M_{z_k}^* = P_{z_k \cls}$, and $P_{\cls}-P_{z_k\cls} = P_{\cls\ominus z_k\cls}$ (see \eqref{eqn: S minus zS}). Let $m\in I_n \setminus \{i,j,k\}$. Then
\[
\begin{split}
(I-\Delta_{C_m})(I-\Delta_{C_k})([C_j,C_i^*]) & = (I- \Delta_{C_m})(P_{\clq}M_{z_i}^*P_{\cls\ominus z_k\cls}M_{z_j}|_{\clq})
\\
&=P_{\clq}M_{z_i}^*P_{\cls\ominus z_k\cls}M_{z_j}|_{\clq}-P_{\clq}M_{z_m}P_{\clq}M_{z_i}^*P_{\cls\ominus z_k\cls}M_{z_j}M_{z_m}^*|_{\clq}
\\
&=P_{\clq}M_{z_i}^*P_{\cls\ominus z_k\cls}M_{z_j}|_{\clq}-P_{\clq}M_{z_m}M_{z_i}^*P_{\cls\ominus z_k\cls}M_{z_j}M_{z_m}^*|_{\clq}
\\
&=P_{\clq}M_{z_i}^*P_{\cls\ominus z_k\cls}M_{z_j}|_{\clq}-P_{\clq}M_{z_i}^*(M_{z_m}P_{\cls\ominus z_k\cls}M_{z_m}^*)M_{z_j}|_{\clq}.
\end{split}
\]
But again, $M_{z_m}P_{\cls\ominus z_k\cls}M_{z_m}^*$ is a projection, and $M_{z_m}P_{\cls\ominus z_k\cls}M_{z_m}^* = P_{(\cls\ominus z_k\cls)\ominus z_m(\cls\ominus z_k\cls)}$, and hence
\[
\begin{split}
(I-\Delta_{C_m})(I-\Delta_{C_k})([C_j,C_i^*]) & =P_{\clq}M_{z_i}^*(P_{\cls\ominus z_k\cls}-P_{z_m(\cls\ominus z_k\cls)})M_{z_j}|_{\clq}
\\
&=P_{\clq}M_{z_i}^*P_{(\cls\ominus z_k\cls)\ominus z_m(\cls\ominus z_k\cls)}M_{z_j}|_{\clq}
\\
&=P_{\clq}M_{z_i}^*P_{(\cls\ominus z_k\cls)\cap (\cls\ominus z_m\cls)}M_{z_j}|_{\clq}.
\end{split}
\]
Continuing this line of arguments, we finally conclude that
\[
\begin{split}
\delta_{ij}(C) = P_{\clq}M_{z_i}^*P_{\clw_{\clp_{i,j}}}M_{z_j}|_{\clq} = P_{\clq}M_{z_i}^*P_{\clw_{\clp_{i,j}}}M_{z_j}|_{\clq}.
\end{split}
\]	
where  $\clp_{i,j}= I_n \setminus \{i,j\}$. This completes the proof.
\end{proof}

We use Lemma \ref{wandering} to prove the following useful identity. The proof does not use the Beurling inner functions directly, and so we will keep it suppressed.

\begin{lem}\label{W=Wj}
Let $\clq$ be a Beurling quotient module of $H^2_{\cle_*}(\D^n)$ and let $j \in I_n$. Then
\[	
P_{\clw}M_{z_j}|_{\clq}=P_{\clw_{\{j\}^c}}M_{z_j}|_{\clq}.
\]
\end{lem}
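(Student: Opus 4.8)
The plan is to reduce everything to the orthogonal decomposition of wandering subspaces recorded in Lemma \ref{wandering}(4). Write $\cls = \clq^{\perp} = \Theta H^2_{\cle}(\D^n)$ for the Beurling submodule associated with $\clq$, so that $\clw = \clw_{I_n}$ and $\clw_{\{j\}^c} = \clw_{I_n \setminus \{j\}}$ in the notation of \eqref{eqn: W_P}. Since $I_n \setminus \{j\} \subseteq I_n$, the intersection defining $\clw$ runs over a larger index set, so $\clw \subseteq \clw_{\{j\}^c}$; hence for every $h \in \clq$ we have
\[
P_{\clw_{\{j\}^c}} M_{z_j} h - P_{\clw} M_{z_j} h = P_{\clw_{\{j\}^c} \ominus \clw}\, M_{z_j} h .
\]
Thus it suffices to identify the complement $\clw_{\{j\}^c}\ominus \clw$ and then show that $M_{z_j}\clq$ is orthogonal to it.

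For the first step I would invoke part (4) of Lemma \ref{wandering} with $\clp = I_n \setminus \{j\}$ (which is nonempty and does not contain $j$): it yields $\clw_{\{j\}^c}\ominus z_j\clw_{\{j\}^c} = \clw_{(I_n\setminus\{j\})\cup\{j\}} = \clw_{I_n} = \clw$, that is, $\clw_{\{j\}^c}\ominus\clw = z_j\clw_{\{j\}^c}$. So the claim comes down to the single orthogonality $P_{z_j\clw_{\{j\}^c}} M_{z_j}|_{\clq} = 0$, equivalently $\la M_{z_j} h, z_j w\ra = 0$ for all $h \in \clq$ and $w \in \clw_{\{j\}^c}$.

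This final step is a one-line computation: since $M_{z_j}$ is an isometry (the coordinate multipliers on $H^2_{\cle_*}(\D^n)$ form a tuple of doubly commuting isometries), $\la M_{z_j} h, z_j w\ra = \la M_{z_j}h, M_{z_j}w\ra = \la h, w\ra$, and this vanishes because $w \in \clw_{\{j\}^c} \subseteq \cls = \clq^{\perp}$. Assembling the three displays gives $P_{\clw_{\{j\}^c}} M_{z_j}|_{\clq} = P_{\clw} M_{z_j}|_{\clq}$.

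I do not expect a genuine obstacle here: the only substantive input is the orthogonal splitting of wandering subspaces of doubly commuting submodules from Lemma \ref{wandering}(4) (ultimately Słociński–Mandrekar), and everything else uses just the isometry of $M_{z_j}$ together with $\clq \perp \cls$. An alternative that avoids Lemma \ref{wandering}(4) is to peel off the projections $P_{z_k\cls}$ one coordinate at a time in the style of the proof of Proposition \ref{corss_commutator}, but routing through (4) is shorter and cleaner.
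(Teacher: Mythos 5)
Your proposal is correct and follows essentially the same route as the paper: both arguments rest on Lemma \ref{wandering}(4) to get the splitting $\clw_{\{j\}^c} = \clw \oplus z_j\clw_{\{j\}^c}$, and then reduce the claim to $P_{z_j\clw_{\{j\}^c}}M_{z_j}|_{\clq}=0$. The paper verifies this last step by the projection identity $P_{z_j\clw_{\{j\}^c}} = M_{z_j}P_{\clw_{\{j\}^c}}M_{z_j}^*$ together with $P_{\clw_{\{j\}^c}}|_{\clq}=0$, whereas you verify it with the equivalent inner-product computation $\la M_{z_j}h, z_jw\ra = \la h,w\ra = 0$; the difference is purely cosmetic.
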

\begin{proof}
By part (4) of Lemma \ref{wandering}, we get
\[
P_\clw = P_{\clw_{\{j\}^c} \ominus z_j \clw_{\{j\}^c}} = P_{\clw_{\{j\}^c}} - P_{z_j \clw_{\{j\}^c}}.
\]
As in \eqref{eqn: proj onto zS} that $P_{z_j \clw_{\{j\}^c}} = M_{z_j} P_{\clw_{\{j\}^c}} M_{z_j}^*$, and hence
\[
P_{\clw}M_{z_j}|_{\clq} = P_{\clw_{\{j\}^c}}M_{z_j}|_{\clq} - (M_{z_j}P_{\clw_{\{j\}^c}} M_{z_j}^*)M_{z_j}|_{\clq} = P_{\clw_{\{j\}^c}}M_{z_j}|_{\clq}-M_{z_j}P_{\clw_{\{j\}^c}}|_{\clq}.
\]
Since $P_{\clw_{\{j\}^c}}|_{\clq} = 0$, the result follows immediately.
\end{proof}

Now we apply this to joint commutators of Szeg\"{o} tuples. Recall, for each $j \in I_n$, that $\cld_{j,T} = \overline{\text{ran}}D_{j,T}$, where (see \eqref{eqn: D jT})
\[
D_{j,T}^2 = \Big(\prod_{k \in I_n\setminus \{j\}} (I_{\clb(\clh)} - \Delta_{T_k}) \Big)(D^2_{T_j}).
\]

\begin{prop}\label{corss commutator}
Let $T \in \mathbb{S}_n^B(\clh)$, $i, j \in I_n$, and suppose $i\neq j$. Then
\[
\overline{ran}\delta_{ij}(T)\subseteq \cld_{i,T}.
\]
\end{prop}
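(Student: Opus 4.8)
The plan is to pass, as usual, to the model picture: by Theorem~\ref{Dilation Thm} and \eqref{eqn: T and CT} we may assume $T = C = (C_1,\dots,C_n)$ is the tuple of model operators on a quotient module $\clq \subseteq H^2_{\cle_*}(\D^n)$, and since $T \in \mathbb{S}_n^B(\clh)$, Theorem~\ref{Beurling_qm} (or Theorem~\ref{pure_isometry}) tells us $\clq$ is a Beurling quotient module, say $\clq = \clq_\Theta$ with $\cls = \clq^\perp = \Theta H^2_\cle(\D^n)$. So it suffices to prove $\overline{ran}\,\delta_{ij}(C) \subseteq \cld_{i,C}$ for $i \neq j$.

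First I would invoke Proposition~\ref{corss_commutator} to write
\[
\delta_{ij}(C) = P_{\clq} M_{z_i}^* P_{\clw_{\clp_{i,j}}} M_{z_j}|_{\clq},
\]
where $\clp_{i,j} = I_n \setminus \{i,j\}$. The goal is to compare the range of this operator with $\cld_{i,C}$, and by Lemma~\ref{lemma: defect Ci} we know $\cld_{C_i} = \overline{ran}(P_\clq M_{z_i}^* P_\cls)$, while \eqref{eqn: DjCP} from the proof of Theorem~\ref{defect_1} gives $D^2_{i,C} = P_\clq M_{z_i}^* P_{\clw_{\{i\}^c}} M_{z_i}|_\clq$, where $\clw_{\{i\}^c} = \bigcap_{k \neq i}(\cls \ominus z_k\cls)$. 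The natural strategy is a Douglas range-inclusion argument: it is enough to show $\delta_{ij}(C)\delta_{ij}(C)^* \leq c\, D^2_{i,C}$ for some constant $c > 0$, or more directly to identify $\overline{ran}\,\delta_{ij}(C)$ with $\overline{ran}(P_\clq M_{z_i}^* P_{\clw_{\clp_{i,j}}})$ and then show the latter sits inside $\cld_{i,C}$. Here I would use the key identity Lemma~\ref{W=Wj}: since $\clp_{i,j} = \{i\}^c \setminus \{j\}$, wait --- rather, note $\clw_{\clp_{i,j}}$ relates to $\clw_{\{i\}^c}$ by $\clw_{\{i\}^c} = \clw_{\clp_{i,j}} \ominus z_i \clw_{\clp_{i,j}}$ (by part (4) of Lemma~\ref{wandering}, since $i \notin \clp_{i,j}$), and Lemma~\ref{W=Wj} (applied with the roles arranged so that the relevant wandering space is $\clw_{\clp_{i,j} \cup \{i\}} = \clw_{\{i\}^c}$) lets one replace $P_{\clw_{\clp_{i,j}}} M_{z_i}$ by $P_{\clw_{\{i\}^c}} M_{z_i}$ on $\clq$.

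The cleanest route is probably this: compute $\delta_{ij}(C)^* = P_\clq M_{z_j}^* P_{\clw_{\clp_{i,j}}} M_{z_i}|_\clq$, and observe that for $h \in \clq$, applying Lemma~\ref{W=Wj}-type reasoning, $P_{\clw_{\clp_{i,j}}} M_{z_i} h = P_{\clw_{\clp_{i,j}}} M_{z_i} h$; since $M_{z_i}\clw_{\clp_{i,j}} \subseteq \clw_{\clp_{i,j}}$ (part (3) of Lemma~\ref{wandering}), the vector $M_{z_i}h$ decomposes and one extracts that $ran\,\delta_{ij}(C)^* \subseteq ran(P_\clq M_{z_i}^* P_{\clw_{\{i\}^c}}) \cdot(\text{something})$. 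Actually the most transparent formulation: show directly that $\overline{ran}\,\delta_{ij}(C) = \overline{ran}(P_\clq M_{z_i}^* P_{\clw_{\clp_{i,j}}})$ by noting $M_{z_j}|_\clq$ has dense range composed with $P_{\clw_{\clp_{i,j}}}$ appropriately --- more carefully, $\overline{ran}\,\delta_{ij}(C) \subseteq \overline{ran}(P_\clq M_{z_i}^* P_{\clw_{\clp_{i,j}}})$ is immediate, and since $\clw_{\clp_{i,j}} \subseteq \cls$ we get $\overline{ran}(P_\clq M_{z_i}^* P_{\clw_{\clp_{i,j}}}) \subseteq \overline{ran}(P_\clq M_{z_i}^* P_\cls) = \cld_{C_i}$ by Lemma~\ref{lemma: defect Ci}. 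But we need the stronger conclusion, landing in $\cld_{i,C}$, not just $\cld_{C_i}$; for that one uses $\clw_{\clp_{i,j}} \supseteq \clw_{\{i\}^c}$ together with the wandering-space identity to show $P_\clq M_{z_i}^* P_{\clw_{\clp_{i,j}}} = P_\clq M_{z_i}^* P_{\clw_{\{i\}^c}}$ on all of $H^2_{\cle_*}(\D^n)$ --- this holds because $\clw_{\clp_{i,j}} \ominus \clw_{\{i\}^c} = z_i \clw_{\clp_{i,j}}$, and $M_{z_i}^*$ applied to $z_i\clw_{\clp_{i,j}} = M_{z_i}\clw_{\clp_{i,j}}$ lands in $\clw_{\clp_{i,j}} \subseteq \cls$, so its $P_\clq$-projection vanishes. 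Hence $\overline{ran}\,\delta_{ij}(C) \subseteq \overline{ran}(P_\clq M_{z_i}^* P_{\clw_{\{i\}^c}}) = \overline{ran}\, D_{i,C} = \cld_{i,C}$, using \eqref{eqn: DjCP} and Lemma~\ref{lemma: defect Ci} style reasoning for the last equality.

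\textbf{Main obstacle.} The technical heart is the identity $P_\clq M_{z_i}^* P_{\clw_{\clp_{i,j}}} = P_\clq M_{z_i}^* P_{\clw_{\{i\}^c}}$ (on $H^2_{\cle_*}(\D^n)$, or at least after restriction to $\clq$), i.e. verifying that passing from the smaller index set $\clp_{i,j}$ to $\{i\}^c$ does not change the compressed operator --- this is exactly the phenomenon isolated in Lemma~\ref{W=Wj}, and the work is in checking that the ``extra'' piece $z_i\clw_{\clp_{i,j}}$ is killed by $P_\clq M_{z_i}^*$. One must be a little careful that $i \notin \clp_{i,j}$ (true, since $\clp_{i,j} = I_n \setminus \{i,j\}$) so that part~(4) of Lemma~\ref{wandering} applies to give $\clw_{\{i\}^c} = \clw_{\clp_{i,j} \cup \{i\}} = \clw_{\clp_{i,j}} \ominus z_i \clw_{\clp_{i,j}}$, and that part~(3) gives the needed invariance $z_i \clw_{\clp_{i,j}} \subseteq \clw_{\clp_{i,j}} \subseteq \cls$. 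Once that reduction is in place, the range inclusion into $\cld_{i,C}$ follows by Douglas' lemma, and descending back from $C$ to $T$ via the unitary equivalence is routine.
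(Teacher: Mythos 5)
Your overall strategy is the paper's: pass to the model tuple $C$ on a Beurling quotient module, start from $\delta_{ij}(C)=P_{\clq}M_{z_i}^*P_{\clw_{\clp_{i,j}}}M_{z_j}|_{\clq}$ (Proposition \ref{corss_commutator}), shrink the wandering subspace from $\clw_{\clp_{i,j}}$ to $\clw_{\{i\}^c}$, and then identify $\cld_{i,C}$ with $\overline{\text{ran}}\big(P_{\clq}M_{z_i}^*P_{\clw_{\{i\}^c}}\big)$ via \eqref{eqn: DjCP}. The gap is in the shrinking step. Part (4) of Lemma \ref{wandering} gives $\clw_{\clp_{i,j}}\ominus z_j\clw_{\clp_{i,j}}=\clw_{\clp_{i,j}\cup\{j\}}=\clw_{\{i\}^c}$, so the piece you must annihilate is $z_j\clw_{\clp_{i,j}}$, not $z_i\clw_{\clp_{i,j}}$ (adjoining $i$ instead produces $\clw_{\clp_{i,j}\cup\{i\}}=\clw_{\{j\}^c}$, a different space). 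With the correct decomposition your vanishing argument breaks down: the extra term is $P_{\clq}M_{z_i}^*P_{z_j\clw_{\clp_{i,j}}}$, and $M_{z_i}^*(z_j\clw_{\clp_{i,j}})=z_jM_{z_i}^*\clw_{\clp_{i,j}}$ need not lie in $\cls$, because $i\notin\clp_{i,j}$ and so $\clw_{\clp_{i,j}}$ is not orthogonal to $z_i\cls$. In fact the identity you assert, $P_{\clq}M_{z_i}^*P_{\clw_{\clp_{i,j}}}=P_{\clq}M_{z_i}^*P_{\clw_{\{i\}^c}}$ on all of $H^2_{\cle_*}(\D^n)$, is false: take $n=3$, $\Theta(z)=z_1z_2z_3$, $i=1$, $j=2$; then $z_1z_2^2z_3\in z_2\clw_{\{3\}}=\clw_{\clp_{1,2}}\ominus\clw_{\{1\}^c}$, so $P_{\clw_{\{1\}^c}}$ kills it while $P_{\clw_{\clp_{1,2}}}$ fixes it, and $P_{\clq}M_{z_1}^*(z_1z_2^2z_3)=z_2^2z_3\neq 0$.

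The correct mechanism --- and the one the paper uses --- exploits the factor $M_{z_j}|_{\clq}$ sitting on the \emph{right} of $\delta_{ij}(C)$ rather than the factor $M_{z_i}^*$ on the left: since
\[
P_{z_j\clw_{\clp_{i,j}}}M_{z_j}P_{\clq}=M_{z_j}P_{\clw_{\clp_{i,j}}}M_{z_j}^*M_{z_j}P_{\clq}=M_{z_j}P_{\clw_{\clp_{i,j}}}P_{\clq}=0
\]
(because $\clw_{\clp_{i,j}}\subseteq\cls\perp\clq$), one obtains $\delta_{ij}(C)=P_{\clq}M_{z_i}^*P_{\clw_{\{i\}^c}}M_{z_j}|_{\clq}$. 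From here your concluding step is fine: $\overline{\text{ran}}\,\delta_{ij}(C)\subseteq\overline{\text{ran}}\big(P_{\clq}M_{z_i}^*P_{\clw_{\{i\}^c}}\big)=\overline{\text{ran}}\,D_{i,C}^2=\cld_{i,C}$, using \eqref{eqn: DjCP} and $\overline{\text{ran}}(AA^*)=\overline{\text{ran}}(A)$. (The paper additionally replaces $P_{\clw_{\{i\}^c}}$ by $P_{\clw}$ on the left via the adjoint of Lemma \ref{W=Wj} --- note that this left-factor reduction works there precisely because the multiplier being peeled off, $z_i$, matches the adjoint $M_{z_i}^*$ standing to its left, which is exactly the matching your version of the step lacks.)
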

\begin{proof}
We consider $T$ as a tuple of model operators $C = (C_1,\dots, C_n)$ on some Beurling quotient module $\clq \subseteq H^2_{\cle_*}(\D^n)$. By Theorem \ref{corss_commutator}, we know that $\delta_{ij}(C) = P_{\clq}M_{z_i}^*P_{\clw_{\clp_{i,j}}}M_{z_j}P_{\clq}$, where $\clp_{i,j} = I_n \setminus \{i,j\}$. By part (4) of Lemma \ref{wandering}, we also know that $\clw_{\clp_{i,j}}\ominus z_j \clw_{\clp_{i,j}} = {\clw_{\{i\}^c}}$. Writing $P_{\clw_{\{i\}^c}} = P_{\clw_{\clp_{i,j}}} - P_{z_j \clw_{\clp_{i,j}}}$, this says
\[
\delta_{ij}(C) = P_{\clq}M_{z_i}^*P_{\clw_{\clp_{i,j}}}M_{z_j}P_{\clq} = P_{\clq}M_{z_i}^* (P_{\clw_{\clp_{i,j}} }-M_{z_j} P_{\clw_{\clp_{i,j}} } M_{z_j}^*)M_{z_j}P_{\clq} = P_{\clq}M_{z_i}^*P_{\clw_{\{i\}^c}} M_{z_j}P_{\clq}.
\]
In the above, we used $P_{\clw_{\clp_{i,j}}} P_{\clq} = 0$. Applying Lemma \ref{W=Wj} to this yields
\begin{equation}\label{crosscommmmmmmmm}
\delta_{ij}(C) = P_{\clq}M_{z_i}^*P_{\clw}M_{z_j}P_{\clq},
\end{equation}
which implies $\overline{ran}\delta_{ij}(C) \subseteq \text{clos}(P_{\clq}M_{z_i}^* \clw)$. By \eqref{eqn: DjCP} and Lemma \ref{W=Wj}, we get
\[
D_{i,C}^2=P_{\clq}M_{z_i}^*P_{\clw_{\{i\}^c}} M_{z_i}|_{\clq}=P_{\clq}M_{z_i}^*P_{\clw}M_{z_i}|_{\clq},
\]
which means that
\begin{equation}\label{eqn: clDiC}
\cld_{i,C}=\text{clos}(P_{\clq}M_{z_i}^*(\clw)),
\end{equation}
and hence, we conclude that $\overline{ran}\delta_{ij}(C)\subseteq \cld_{i,C}$.
\end{proof}
	
As a consequence, we have:
\[
D_{k,T} \delta_{ij}(T) =0.
\]
for all $k\neq i$ and $i\neq j$. Indeed, by Corollary \ref{product_0}, we know that $D_{k,T}D_{i,T}=0$. For each $f\in \clq$, using Proposition \ref{corss commutator}, we find
\[
D_{k,T}\delta_{ij}(T) f \in D_{k,T}\cld_{i,T}=\{0\},
\]
which shows that $D_{k,T} \delta_{ij}(T) = 0$.

We now revisit Beurling quotient modules and analyze some of their essential properties with respect to truncated defect spaces.

\begin{prop}\label{Structure}
Consider the tuple of model operators $C$ on $\clq_\Theta$, where $\Theta\in H^{\infty}_{\clb(\cle,\cle_*)}(\D^n)$ is an inner function. Given $j \in I_n$, we have the following:
\begin{enumerate}
\item $\cld_{j,C}= \text{clos}(M_{z_j}^*\Theta \cle)$, and
\item $\cld_{C_j} = \displaystyle \bigvee_{\alpha\in \mathbb{Z}^{|\clp|}_+} M_{z,\clp}^{\alpha} M_{z_j}^*\Theta \cle$, where $\clp= I_n \setminus \{j\}$.
\end{enumerate}
\end{prop}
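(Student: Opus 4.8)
The plan is to reduce everything to the two defect-space representations already in hand, namely Corollary~\ref{span_defect} (which expresses $\cld_{C_j}$ as the closed span of $\{C^\alpha_{I_n\setminus\{j\}}\cld_{j,C}\}$) and the identity \eqref{eqn: clDiC} from the proof of Proposition~\ref{corss commutator}, which says $\cld_{j,C}=\mathrm{clos}(P_\clq M_{z_j}^*(\clw))$. So part~(1) is essentially already available: by \eqref{eqn: clDiC} we have $\cld_{j,C}=\mathrm{clos}(P_\clq M_{z_j}^*\clw)$, and by part~(1) of Lemma~\ref{wandering} we have $\clw=\Theta\cle$. Thus $\cld_{j,C}=\mathrm{clos}(P_\clq M_{z_j}^*\Theta\cle)$. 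The one point that needs checking is that the projection $P_\clq$ can be dropped, i.e.\ that $M_{z_j}^*\Theta\cle\subseteq\clq$. This follows because $\clq=\clq_\Theta=(\Theta H^2_\cle(\D^n))^\perp$ and $M_{z_j}^*$ maps $\Theta H^2_\cle(\D^n)$ into itself (since $\cls_\Theta=\Theta H^2_\cle(\D^n)$ is a submodule, hence $z_j$-invariant, so $M_{z_j}^*\cls_\Theta\subseteq\cls_\Theta$ fails in general — but what we need is the complementary fact): for $\eta\in\cle$ and any $g\in H^2_\cle(\D^n)$, $\langle M_{z_j}^*\Theta\eta,\Theta g\rangle=\langle\Theta\eta,\Theta z_j g\rangle=\langle\eta,z_jg\rangle_{H^2_\cle}=0$ because $z_jg$ has no constant term. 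Hence $M_{z_j}^*\Theta\eta\perp\cls_\Theta$, so $M_{z_j}^*\Theta\eta\in\clq$, and therefore $P_\clq M_{z_j}^*\Theta\eta=M_{z_j}^*\Theta\eta$, giving $\cld_{j,C}=\mathrm{clos}(M_{z_j}^*\Theta\cle)$.

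For part~(2), I would feed part~(1) into Corollary~\ref{span_defect}. With $\clp=I_n\setminus\{j\}$, Corollary~\ref{span_defect} gives
\[
\cld_{C_j}=\bigvee_{\alpha\in\Z_+^{n-1}}C^\alpha_{\clp}\,\cld_{j,C}
=\bigvee_{\alpha\in\Z_+^{|\clp|}}C^\alpha_{\clp}\,\mathrm{clos}(M_{z_j}^*\Theta\cle).
\]
It remains to replace each $C^\alpha_\clp=C_{p_1}^{\alpha_1}\cdots C_{p_r}^{\alpha_r}$ (with $\clp=\{p_1,\dots,p_r\}$, $C_{p_\ell}=P_\clq M_{z_{p_\ell}}|_\clq$) by $M_{z,\clp}^\alpha=M_{z_{p_1}}^{\alpha_1}\cdots M_{z_{p_r}}^{\alpha_r}$ inside the closed span. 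Since $M_{z_j}^*\Theta\cle\subseteq\cld_{j,C}\subseteq\cld_{C_j}\subseteq\clq$ and, by part~(4) of Theorem~\ref{pure_isometry} together with Corollary~\ref{span_defect}'s setting, $z_i\cld_{C_j}\subseteq\cld_{C_j}\subseteq\clq$ for all $i\neq j$, the subspace $\cld_{j,C}$ is invariant under each $M_{z_{p_\ell}}$ and these restrictions coincide with $C_{p_\ell}|_{\cld_{j,C}}$. Hence $C^\alpha_\clp$ acts on $\cld_{j,C}$ exactly as $M_{z,\clp}^\alpha$, and $C^\alpha_\clp\mathrm{clos}(M_{z_j}^*\Theta\cle)=\mathrm{clos}(M_{z,\clp}^\alpha M_{z_j}^*\Theta\cle)$. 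Substituting,
\[
\cld_{C_j}=\bigvee_{\alpha\in\Z_+^{|\clp|}}M_{z,\clp}^\alpha M_{z_j}^*\Theta\cle,
\]
which is the claimed identity.

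The only genuinely delicate step is the $z_j$-invariance argument used to drop $P_\clq$ in part~(1) and, more importantly, the assertion in part~(2) that $\cld_{j,C}$ is invariant under $M_{z_i}$ for $i\neq j$ with the restriction agreeing with $C_i$. The latter is exactly the content of part~(4) of Theorem~\ref{pure_isometry} (that $z_i\cld_{C_j}\subseteq\cld_{C_j}$) combined with the fact, established in the proof of Theorem~\ref{pure_isometry}, that on a Beurling quotient module one has $C_i D_{C_j}^2=M_{z_i}D_{C_j}^2$ — so the projection $P_\clq$ is genuinely irrelevant here too; I would cite that computation rather than redo it. Everything else is bookkeeping: tracking that the closures interact correctly with the bounded operators $M_{z,\clp}^\alpha$ and that $\cle$ is identified with $\clw=\Theta\cle$ via Lemma~\ref{wandering}(1). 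I expect no serious obstacle beyond making sure the indices $\clp=I_n\setminus\{j\}$ and the $C^\alpha_\clp$ versus $M_{z,\clp}^\alpha$ identification are stated cleanly.
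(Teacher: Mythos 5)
Your proposal is correct and follows essentially the same route as the paper: part (1) via the identity $\cld_{j,C}=\mathrm{clos}(P_{\clq}M_{z_j}^*\clw)$, the identification $\clw=\Theta\cle$, and the orthogonality $M_{z_j}^*\Theta\cle\perp\Theta H^2_{\cle}(\D^n)$ to drop $P_\clq$; part (2) via Corollary \ref{span_defect} together with the identification of $C^\alpha_{\clp}$ with $M_{z,\clp}^{\alpha}$ on the relevant subspace (a point the paper leaves implicit and you rightly make explicit). One small wording slip: what your cited facts give is that $\cld_{C_j}$ (not $\cld_{j,C}$) is invariant under each $M_{z_i}$, $i\neq j$; since $\cld_{j,C}\subseteq\cld_{C_j}\subseteq\clq$, this larger invariant subspace is exactly what makes the induction $C^\alpha_\clp x=M_{z,\clp}^\alpha x$ go through, so the argument is unaffected.
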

\begin{proof}
It is easy to see that $M_{z_j}^*(\Theta \cle) \perp \Theta H^2_{\cle_*}(\D^n)$, and so $M_{z_j}^*(\Theta \cle)\subseteq \clq_{\Theta}$. By \eqref{eqn: clDiC}, we have
\[
\cld_{j,C} = \text{clos}(P_{\clq_{\Theta}}M_{z_j}^*\clw_{\{j\}^c}) = \overline{\text{ran}}(P_{\clq_{\Theta}}M_{z_j}^*|_{\clw_{\{j\}^c}}).
\]
By Lemma \ref{W=Wj}, we get $P_{\clq_{\Theta}}M_{z_j}^*|_{\clw_{\{j\}^c}}=P_{\clq_{\Theta}}M_{z_j}^*|_{\clw}$. Now an application of Lemma \ref{wandering} shows that
\[
\cld_{j,C} = \text{clos}(P_{\clq_{\Theta}}M_{z_j}^*\Theta\cle)=\text{clos}(M_{z_j}^*\Theta\cle),
\]
as $M_{z_j}^*(\Theta \cle)\subseteq \clq_{\Theta}$. This proves (1). For (2), by Lemma \ref{span_defect} again,
\[
\cld_{C_j}=\bigvee_{\alpha\in \mathbb{Z}^{|\clp|}_+} M_{z,\clp}^{\alpha}\cld_{j,C} =\bigvee_{\alpha\in \mathbb{Z}^{|\clp|}_+} M_{z,\clp}^{\alpha} (M_{z_j}^*\Theta \cle),
\]
which completes the proof of the proposition.
\end{proof}

Recall that a quotient module $\clq$ of $H^2_{\cle_*}(\D^n)$ is minimal if
\[
\bigvee_{\alpha\in \mathbb{Z}^n_+}z^{\alpha}\clq = H^2_{\cle_*}(\D^n).
\]
This is in view of the fact that $C$ on $\clq$ dilates to $(M_{z_1}, \ldots, M_{z_n})$ on $H^2_{\cle_*}(\D^n)$ (see Theorem \ref{Dilation Thm}). Now we prove that, with the minimality assumption for Beurling quotient modules, the corresponding submodules do not intersect the fiber spaces.

\begin{lem}\label{minimal}
Let $\Theta \in H^\infty_{\clb(\cle, \cle_*)}(\D^n)$ be an inner function. If $\clq_{\Theta}\subseteq H^2_{\cle_*}(\D^n)$ is a minimal quotient module if and only if
\[
\cls_{\Theta}\cap \cle_*=\{0\}.
\]
\end{lem}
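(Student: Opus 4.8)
The plan is to prove both implications directly in terms of the submodule $\cls_\Theta = \Theta H^2_{\cle}(\D^n)$ and its orthogonal complement $\clq_\Theta$ inside $H^2_{\cle_*}(\D^n)$. Recall that $\cle_* \subseteq H^2_{\cle_*}(\D^n)$ sits as the fiber space at the origin, namely $\cle_* = \bigcap_{i \in I_n} \ker M_{z_i}^*$, and that minimality of $\clq_\Theta$ means $\bigvee_{\alpha \in \Z_+^n} z^\alpha \clq_\Theta = H^2_{\cle_*}(\D^n)$, or equivalently (passing to orthogonal complements) that the only element of $\cls_\Theta$ annihilated by all $M_{z_i}^*$, ... more carefully, that $\bigcap_{\alpha} (z^\alpha \clq_\Theta)^\perp = \{0\}$. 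Let me instead organize it around the cleaner equivalent formulation: $\clq_\Theta$ is minimal if and only if there is no nonzero $g \in \cls_\Theta$ with $\langle z^\alpha q, g\rangle = 0$ for all $q \in \clq_\Theta$ and all $\alpha$; since $\cls_\Theta$ is a submodule, this is the same as saying there is no nonzero $g \in \cls_\Theta$ that is orthogonal to $\clq_\Theta$ and all its shifts, i.e. no nonzero $g \in \bigcap_\alpha z^{*\alpha}\cls_\Theta$ — which by the standard $C_{\cdot 0}$-type argument on the Hardy space reduces to the fiber condition.

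\textbf{Forward direction ($\Rightarrow$).} Suppose $\cls_\Theta \cap \cle_* \ne \{0\}$, and pick a nonzero $g \in \cls_\Theta \cap \cle_*$. Since $g \in \cle_* = \bigcap_i \ker M_{z_i}^*$, we have $M_{z_i}^* g = 0$ for all $i$, hence $M^{*\alpha} g = 0$ for every multi-index $\alpha$ with $|\alpha| \ge 1$, where $M^{*\alpha} = M_{z_1}^{*\alpha_1}\cdots M_{z_n}^{*\alpha_n}$. Now take any $q \in \clq_\Theta$ and any $\alpha \in \Z_+^n$: if $\alpha = 0$ then $\langle z^\alpha q, g \rangle = \langle q, g\rangle = 0$ because $q \perp \cls_\Theta$ and $g \in \cls_\Theta$; if $|\alpha| \ge 1$ then $\langle z^\alpha q, g\rangle = \langle q, M^{*\alpha} g\rangle = 0$. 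Thus $g \perp \bigvee_\alpha z^\alpha \clq_\Theta$, and since $g \ne 0$, the span is a proper subspace, so $\clq_\Theta$ is not minimal. This proves the contrapositive of the "only if" part.

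\textbf{Reverse direction ($\Leftarrow$).} Suppose $\clq_\Theta$ is not minimal, so $\clm := \bigvee_{\alpha} z^\alpha \clq_\Theta$ is a proper (closed, shift-invariant) subspace of $H^2_{\cle_*}(\D^n)$. Then $\clm^\perp$ is a nonzero quotient module contained in $\cls_\Theta$ (it is contained in $\clq_\Theta^\perp = \cls_\Theta$ since $\clq_\Theta \subseteq \clm$). A nonzero quotient module $\cln$ of $H^2_{\cle_*}(\D^n)$ always contains a nonzero element of the fiber $\cle_*$: indeed, take any nonzero $f \in \cln$; applying suitable $M_{z_i}^*$'s repeatedly (each of which keeps us in $\cln$ and, on the Hardy space over the polydisc, eventually kills any fixed $f$ after finitely many applications in each variable — or, more safely, choose a lowest-degree nonzero homogeneous-type component) one reaches a nonzero vector in $\bigcap_i \ker M_{z_i}^* \cap \cln = \cle_* \cap \cln$. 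Since $\cln \subseteq \cls_\Theta$, this yields a nonzero element of $\cls_\Theta \cap \cle_*$. Hence $\cls_\Theta \cap \cle_* \ne \{0\}$, completing the contrapositive of the "if" part.

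\textbf{Main obstacle.} The only genuinely delicate point is the assertion used in the reverse direction that every nonzero quotient module of $H^2_{\cle_*}(\D^n)$ meets $\cle_*$ nontrivially; I would make this precise by noting that for any nonzero $f \in \cln$ there is a smallest $N$ with some coefficient of $f$ of total degree $N$ nonzero, and then $M^{*\beta} f$ for an appropriate $\beta$ with $|\beta| = N$ lands in $\cle_* \setminus \{0\}$ and stays in $\cln$ by backward-shift invariance. Everything else is bookkeeping with the identities $\cle_* = \bigcap_i \ker M_{z_i}^*$, $\clq_\Theta^\perp = \cls_\Theta$, and the adjoint relation $\langle z^\alpha q, g\rangle = \langle q, M^{*\alpha} g\rangle$.
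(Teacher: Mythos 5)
Your forward direction is correct, and in fact a bit cleaner than the paper's: showing directly that a nonzero $g\in\cls_{\Theta}\cap\cle_*$ is orthogonal to every $z^{\alpha}\clq_{\Theta}$ (using $g\perp\clq_{\Theta}$ for $\alpha=0$ and $M^{*\alpha}g=0$ for $|\alpha|\geq 1$) avoids the paper's detour through the decomposition $\cle_*=\mathbb{C}h\oplus\cle_1$ and the identification $h\in\Theta\cle$.

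The reverse direction, however, has a genuine gap. The auxiliary claim you lean on --- that every nonzero quotient module $\cln$ of $H^2_{\cle_*}(\D^n)$ contains a nonzero element of $\cle_*$ --- is false. Already for $n=1$ and $\cle_*=\mathbb{C}$, take $\Theta(z)=(z-a)/(1-\bar{a}z)$ with $0<|a|<1$: then $\clq_{\Theta}$ is the one-dimensional span of $k_a(z)=(1-\bar{a}z)^{-1}$, which contains no nonzero constant. Your attempted precise version breaks at exactly the point you flagged as the main obstacle: if $f=\sum_{\gamma}a_{\gamma}z^{\gamma}$ and $\beta$ is a lowest-total-degree index with $a_{\beta}\neq 0$, then $M^{*\beta}f$ has nonzero constant term $a_{\beta}$ but in general also nonzero higher-order terms, so it does not land in $\cle_*$. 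What rescues the argument for the particular space $\cln=\clm^{\perp}$, $\clm=\bigvee_{\alpha}z^{\alpha}\clq_{\Theta}$, is that $\clm$ is not merely a submodule but a \emph{reducing} subspace for $(M_{z_1},\dots,M_{z_n})$: for $\alpha_i\geq 1$ one has $M_{z_i}^*(z^{\alpha}q)=z^{\alpha-e_i}q$, while for $\alpha_i=0$ the doubly commuting relations give $M_{z_i}^*(z^{\alpha}q)=z^{\alpha}M_{z_i}^*q\in z^{\alpha}\clq_{\Theta}$. Hence $\clm=H^2_{\clf}(\D^n)$ for a closed subspace $\clf\subseteq\cle_*$, so $\clm^{\perp}=H^2_{\cle_*\ominus\clf}(\D^n)\subseteq\cls_{\Theta}$, and any nonzero vector of $\cle_*\ominus\clf$ gives a nonzero element of $\cls_{\Theta}\cap\cle_*$. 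This is precisely the route the paper takes; you should replace the false general claim about quotient modules with this reducing-subspace argument.
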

\begin{proof}
Suppose that $h\in \cls_{\Theta}\cap \cle_*$.
We want to show that $h=0$. First, we claim that $h\in \clw$: There exist $f \in H^2_\cle(\D^n)$ and $\zeta \in \cle_*$ such that $h = \Theta f = \zeta$. Then
\[
f = M_\Theta^* \zeta = \Theta(0)^* \zeta:= \eta \in \cle,
\]
implying that $h = \Theta \eta \in \Theta \cle = \clw$. Now, recall that part (1) of Lemma \ref{wandering} implies $\clw = \Theta \cle$. This proves the claim along with the fact that $h=\Theta \eta \in \clw$ for some $\eta \in \cle$. We now note that $h=0$ if and only if $\eta=0$. If possible, suppose that $h \neq 0$. Since, $h\in \cle_*$, we write
\[
\cle_*=\cle_0\oplus \cle_1,
\]
where $\cle_0=\mathbb{C} h$. Then, $H^2_{\cle_*}(\D^n)=H^2_{\cle_0}(\D^n)\oplus H^2_{\cle_1}(\D^n)$. Since $z^{\alpha}h\in \cls_{\Theta}$ for all $\alpha\in \mathbb{Z}^n_+$, it follows that
\[
H^2_{\cle_0}(\D^n)=\bigvee_{\alpha\in \mathbb{Z}^n_+}z^{\alpha}h \subseteq \cls_{\Theta}.
\]
This implies
\[
\clq_{\Theta}=\cls_{\Theta}^{\perp} \subseteq \big(H^2_{\cle_0}(\D^n)\big)^{\perp}=H^2_{\cle_*}(\D^n)\ominus H^2_{\cle_0}(\D^n)=H^2_{\cle_1}(\D^n).
\]
As $\clq_{\Theta}$ is minimal in $H^2_{\cle_*}(\D^n)$, we obation
\[
H^2_{\cle_*}(\D^n)=\bigvee_{\alpha\in \mathbb{Z}^n_+}z^{\alpha} \clq_{\Theta}
\subseteq H^2_{\cle_1}(\D^n)\subsetneq H^2_{\cle_*}(\D^n).
\]
This contradiction leads us to conclude that $h=0$. 

\NI For the reverse direction, if possible, assume that $\clq_{\Theta}$ is not minimal. Then there exist closed subspaces $\clf$ and $\clg$ of $\cle_*$ such that $\cle_*=\clf\oplus \clg$, $\dim \clg\geq 1$, and 
\[
\bigvee_{\alpha\in \mathbb{Z}_+^n} z^{\alpha}\clq_{\Theta}=H^2_{\clf}(\D^n)\subseteq H^2_{\cle_*}(\D^n).
\]
Let $0\neq\eta\in \clg$. As $\eta\perp \clq_{\Theta}$, it follows that $\eta \in \cls_{\Theta}$, and hence $\eta\in \cls_{\Theta}\cap \cle_*$. This yields a contradiction to the fact that $\cls_{\Theta}\cap \cle_*=\{0\}$.
\end{proof}

We now proceed to compute the wandering subspaces of Beurling submodules (see \eqref{eqn: W_P}).

\begin{prop}\label{wandering_defect}
Let $\Theta \in H^\infty_{\clb(\cle, \cle_*)}(\D^n)$ be an inner function. If $\clq_{\Theta}\subseteq H^2_{\cle_*}(\D^n)$ is a minimal quotient module, then
\[
\clw=\bigvee_{j \in I_n} P_{\clw_{\{j\}^c}}M_{z_j}\clq_{\Theta}=\bigvee_{j \in I_n} P_{\clw}M_{z_j}\cld_{j,C} = \bigvee_{j \in I_n} P_{\clw}M_{z_j}\clq_{\Theta} = \bigvee_{j \in I_n} P_{\clw_{\{j\}^c}} M_{z_j}\cld_{j,C}.
\]
\end{prop}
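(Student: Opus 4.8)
The plan is to collapse the five-fold identity to a single nontrivial inclusion, then to prove that inclusion by an orthogonality argument that uses the minimality hypothesis only at the very end.

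First I would record the two bookkeeping reductions. Lemma~\ref{W=Wj} gives $P_{\clw}M_{z_j}|_{\clq_{\Theta}}=P_{\clw_{\{j\}^c}}M_{z_j}|_{\clq_{\Theta}}$ for every $j\in I_n$, and since $\cld_{j,C}\subseteq\clq_{\Theta}$ (noted in the proof of Proposition~\ref{Structure}), the same identity restricts to $\cld_{j,C}$. Hence
\[
\bigvee_{j\in I_n}P_{\clw_{\{j\}^c}}M_{z_j}\clq_{\Theta}=\bigvee_{j\in I_n}P_{\clw}M_{z_j}\clq_{\Theta}
\qquad\text{and}\qquad
\bigvee_{j\in I_n}P_{\clw_{\{j\}^c}}M_{z_j}\cld_{j,C}=\bigvee_{j\in I_n}P_{\clw}M_{z_j}\cld_{j,C},
\]
so the whole statement reduces to $\clw=\bigvee_{j}P_{\clw}M_{z_j}\cld_{j,C}=\bigvee_{j}P_{\clw}M_{z_j}\clq_{\Theta}$. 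Two of the three remaining inclusions are immediate: $P_{\clw}$ has range $\clw$, so $\bigvee_{j}P_{\clw}M_{z_j}\clq_{\Theta}\subseteq\clw$, and since $\cld_{j,C}\subseteq\clq_{\Theta}$ we get $\bigvee_{j}P_{\clw}M_{z_j}\cld_{j,C}\subseteq\bigvee_{j}P_{\clw}M_{z_j}\clq_{\Theta}$. Thus everything comes down to the single inclusion $\clw\subseteq\bigvee_{j}P_{\clw}M_{z_j}\cld_{j,C}$.

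For that inclusion I would argue by orthogonality. Suppose $g\in\clw$ is orthogonal to $P_{\clw}M_{z_j}\cld_{j,C}$ for every $j\in I_n$. Since $g\in\clw$ and $P_{\clw}$ is the orthogonal projection onto $\clw$, for each $f\in\cld_{j,C}$ we have $0=\la g,P_{\clw}M_{z_j}f\ra=\la P_{\clw}g,M_{z_j}f\ra=\la g,M_{z_j}f\ra=\la M_{z_j}^{*}g,f\ra$, so $M_{z_j}^{*}g\perp\cld_{j,C}$. By Proposition~\ref{Structure}(1) together with Lemma~\ref{wandering}(1), $\cld_{j,C}=\text{clos}(M_{z_j}^{*}\Theta\cle)=\text{clos}(M_{z_j}^{*}\clw)$, so in particular $M_{z_j}^{*}g\in\cld_{j,C}$ (take the generator $M_{z_j}^{*}w$ with $w=g$). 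Comparing with $M_{z_j}^{*}g\perp\cld_{j,C}$ forces $M_{z_j}^{*}g=0$. As $j$ was arbitrary, $g\in\bigcap_{j\in I_n}\ker M_{z_j}^{*}=\cle_{*}$, the subspace of constant functions. But also $g\in\clw\subseteq\cls_{\Theta}$, hence $g\in\cls_{\Theta}\cap\cle_{*}$, which is $\{0\}$ by the minimality hypothesis via Lemma~\ref{minimal}. Therefore $g=0$, and the inclusion $\clw\subseteq\bigvee_{j}P_{\clw}M_{z_j}\cld_{j,C}$ follows, closing the chain of equalities.

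The \emph{only} delicate point — and the single place where minimality is genuinely used — is the last step: a vector of $\clw$ annihilated by every $M_{z_j}^{*}$ is a constant $\cle_{*}$-valued function, and minimality kills it since $\cls_{\Theta}\cap\cle_{*}=\{0\}$. Everything before it is routine, built on the two identities $P_{\clw}M_{z_j}|_{\clq_{\Theta}}=P_{\clw_{\{j\}^c}}M_{z_j}|_{\clq_{\Theta}}$ and $\cld_{j,C}=\text{clos}(M_{z_j}^{*}\clw)$ already established, plus the trivial fact that $P_{\clw}$ has range $\clw$. I expect no obstacle beyond keeping track of which wandering subspace is attached to which index set.
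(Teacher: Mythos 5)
Your proof is correct and follows essentially the same route as the paper's: both hinge on Lemma \ref{W=Wj}, the identification $\cld_{j,C}=\text{clos}(M_{z_j}^*\clw)$ from Proposition \ref{Structure}, and the orthogonality-plus-minimality argument via Lemma \ref{minimal}. Your reorganization (reducing to the single inclusion $\clw\subseteq\bigvee_j P_{\clw}M_{z_j}\cld_{j,C}$ and orthogonalizing directly against $\cld_{j,C}$ rather than against $\clq_\Theta$ first) is a slightly tighter packaging of the same ideas.
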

\begin{proof} 		
It is evident that $P_{\{j\}^c} M_{z_j}\clq_{\Theta}\subseteq \clw_{\{j\}^c}$. Let $f \in \clq_{\Theta}$ and $h\in \clw_{\{j\}^c}$. Part (3) of Lemma \ref{wandering} tells us that $z_j \clw_{\{j\}^c} \subseteq \clw_{\{j\}^c}$, that is, $P_{\clw_{\{j\}^c}} z_jh = z_jh$, and hence
\[
\la P_{\clw_j}M_{z_j}f, z_jh \ra = \la z_j f, P_{\clw_j} z_jh \ra = \la z_j f, z_jh \ra = \la f, h  \ra=0,
\]
which implies $P_{\clw_{\{j\}^c}}M_{z_j}\clq_{\Theta}\perp z_j\clw_{\{j\}^c}$. So, $P_{\clw_{\{j\}^c}} M_{z_j}\clq_{\Theta}\subseteq \clw_{\{j\}^c} \ominus z_j \clw_{\{j\}^c}$. However, part (4) of Lemma \ref{wandering} says $\clw= \clw_{\{j\}^c} \ominus z_j \clw_{\{j\}^c}$, and consequently
\[
P_{\clw_{\{j\}^c}}M_{z_j}\clq_{\Theta}\subseteq \clw
\]
for all $j \in I_n$. This gives the one-sided containment that
\[
\bigvee_{j=1}^n P_{\{j\}^c} M_{z_j}\clq_{\Theta}\subseteq \clw.
\]
To prove the reverse containment, pick $h\in \clw$ and assume that $h\perp P_{\clw_{\{j\}^c}}M_{z_j}\clq_{\Theta}$ for all $j$. Then $M_{z_j}^*h\perp \clq_{\Theta}$ for all $j \in I_n$. As $M_{z_j}^*h\in \clq_{\Theta}$, we get $M_{z_j}^*h=0$ for all $j \in I_n$. This implies $h$ is a constant function (that is, $h \in \cle_*$). By Lemma \ref{minimal}, $h=0$, and hence
\[
\clw=\bigvee_{j \in I_n} P_{\clw_{\{j\}^c}} M_{z_j}\clq_{\Theta}.
\]
Let $f\in P_{\clw}M_{z_j}\clq_{\Theta}$ and $f\perp P_{\clw}M_{z_j}\cld_{j,C}$. Then $M_{z_j}^*f\perp \cld_{j, C}$. By (1) of Proposition \ref{Structure}, we have $\cld_{j,C} = \overline{M_{z_j}^*\clw}$ (note that $\clw = \Theta \cle$), and hence $P_{\clq} M_{z_j}^* P_\clw = P_{\cld_{j,C}} M_{z_j}^* P_\clw$. By taking adjoint, we have $P_\clw M_{z_j} P_\clq = P_\clw M_{z_j} P_{\cld_{j,C}}$, which yields
\[
\text{clos} (P_{\clw}M_{z_j}\clq_{\Theta})= \text{clos}  ( P_{\clw} M_{z_j} \cld_{j,C}).
\]
From Lemma \ref{W=Wj}, we get $\text{clos}(P_{\clw}M_{z_j}\clq_{\Theta})=\text{clos}(P_{\clw_{\{j\}^c}} M_{z_j}\clq_{\Theta})$ and
\[
P_{\clw}M_{z_j}|_{\cld_{j,C}}=P_{\clw_{\{j\}^c}}M_{z_j}|_{\cld_{j,C}}.
\]
Hence, $\text{clos}(P_{\clw}M_{z_j}\cld_{j,C}) = \text{clos}(P_{\clw_{\{j\}^c}}M_{z_j}\cld_{j,C})$. This completes the proof.
\end{proof}

%\begin{prop}
%Let $\Theta \in H^\infty_{\clb(\cle, \cle_*)}(\D^n)$ be an inner function. If $\clq_{\Theta}\subseteq H^2_{\cle_*}(\D^n)$ is a minimal quotient module, then
%\[
%\clw=\text{clos} \Big\{\sum_{j=1}^{n}P_{\clw_{\{j\}^c}}M_{z_j}q_j : q_j \in\clq_{\Theta} \Big\}.
%\]
%\end{prop}
%\begin{proof}
%For each $i \in I_n$, define a (not necessarily closed) subspaces of $\clw$ as $\clm_i = P_{\clw_{\{j\}^c}} M_{z_i}\clq_{\Theta}$ for $1\leq i\leq n$. Now
%\[
%\begin{split}
%\bigcap_{i=1}^n\clm_i^{\perp}&=\left\{f\in \clw: f\perp \clm_i \text{ for all } i \in I_n\right\} = \left\{f\in \clw: M_{z_i}^*f=0 \text{ for all } i \in I_n \right\} =\{0\}.
%\end{split}
%\]
%The last equality follows from Lemma \ref{minimal}. Since
%\[
%\left(\clm_1+\dots+\clm_n\right)^{\perp}=\bigcap_{i=1}^n\clm_i^{\perp},
%\]
%we have
%\[
%\clw\ominus (\clm_1+\dots+\clm_n)=\left(\clm_1+\dots+\clm_n\right)^{\perp} =\{0\},
%\]	
%which implies $\clw=\text{clos}(\clm_1+\dots+\clm_n)$ and completes the proof of the proposition.
%\end{proof}

We will soon observe a connection between the wandering subspace $\clw$ and the defect space of the second kind that we are attempting to construct. The notion of commutators in multivariable operator theory is an unresolved issue. In Subsection \ref{sub: new comm}, we introduce yet another notion of commutator.

\section{Joint defect operators}\label{sec: defect op}

The characteristic function of a single contraction $T$ is canonically associated with pairs of defect spaces, notably $D_{T^*}$ and $D_T$. In the context of Szeg\"{o} tuples, the canonical dilations clearly highlight the first defect operator (that is, the defect operator of the first kind). This is particularly $D_{T^*} = (\mathbb{S}_n^{-1}(T, T^*))^{\frac{1}{2}}$. The predominant problem, however, lies in recognizing the defect operator of the second kind, which we aim to address in this section. The construction of the other defect is complex, but in the end, it will come out to be explicit and natural. 

Given a Hilbert space $\clh$, we denote by $\clh^n$ the orthogonal direct sum of $n$-copies of $\clh$:
\[
\clh^n = \underbrace{\clh \oplus \cdots \oplus \clh}_{n-\text{times}}.
\]
The defect operators of the first kind in the context of a commuting tuple of contractions use both the notions of truncated defect operators (see \eqref{eqn: D jT}) and joint commutators introduced in Definition \ref{def: joint comm}. A priori, in the following, the superscript $2$ attached to the defect operator does not mean to guarantee that it is positive. However, we will soon clarify that, for Beurling tuples, it is indeed a positive operator.

\begin{defn}\label{defect}
Let $T \in \clb_c^n(\clh)$. The defect operator of the second kind, or joint defect operator of $T$ is defined by
\[
\bm{D}_T^2:=\begin{bmatrix}
D_{1, T}^2 & \delta_{12}(T) &\cdots & \delta_{1n}(T)
\\
\delta_{21}(T) & D_{2, T}^2&\cdots & \delta_{2n}(T)
\\
\vdots&\vdots&\ddots&\vdots\\
\delta_{n1}(T)  & \delta_{n2}(T) &\cdots&D_{n, T}^2
\end{bmatrix}:\clh^n\raro\clh^n.
\]
\end{defn}
	
We establish a new notation: $\tilde{h}$ represents the general elements of $\clh^n$. In other words, we write
\[
\tilde{h} = \begin{bmatrix}h_1 & \ldots & h_n \end{bmatrix}^t,
\]
for some $h_i \in \clh$ for $i \in I_n$. Similarly, given $X_i \in \clb(\clh, \clk)$, $i \in I_n$, we have $\begin{pmatrix}X_i^*X_j\end{pmatrix}_{n\times n} : \clh^n \raro \clh^n$. It also follows that
\begin{equation}\label{norm_equality}
\langle \begin{pmatrix}X_i^*X_j\end{pmatrix}_{n\times n} \tilde{h}, \tilde{h} \rangle = \Big\|\sum_{i=1}^{n} X_i h_i\Big\|^2,
\end{equation}
for all $\tilde{h} \in \clh^n$. We now infer that for Beurling tuples, the corresponding joint defect operators are indeed positive.
	
\begin{prop}\label{positive_defect}
$\bm{D}_T^2 \geq 0$ for all $T \in \mathbb{S}_n^{B}(\clh)$.
\end{prop}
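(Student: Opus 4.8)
The plan is to reduce to the concrete model case, where all the operators appearing in $\bm{D}_T^2$ have clean expressions in terms of wandering subspaces. By Theorem \ref{Dilation Thm} and \eqref{eqn: T and CT}, and since $T \in \mathbb{S}_n^B(\clh)$ means the associated quotient module $\clq$ is a Beurling quotient module (Theorem \ref{Beurling_qm} and Theorem \ref{pure_isometry}), it suffices to prove $\bm{D}_C^2 \geq 0$ where $C = (C_1, \ldots, C_n)$ is the tuple of model operators on $\clq = \clq_\Theta \subseteq H^2_{\cle_*}(\D^n)$ for some inner $\Theta$. For this I would first invoke \eqref{eqn: DjCP} (from the proof of Theorem \ref{defect_1}), which gives $D_{j,C}^2 = P_\clq M_{z_j}^* P_{\clw_{\{j\}^c}} M_{z_j}|_\clq$, together with Lemma \ref{W=Wj} to replace $\clw_{\{j\}^c}$ by $\clw$, so that $D_{j,C}^2 = P_\clq M_{z_j}^* P_\clw M_{z_j}|_\clq$. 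Likewise \eqref{crosscommmmmmmmm} in the proof of Proposition \ref{corss commutator} gives $\delta_{ij}(C) = P_\clq M_{z_i}^* P_\clw M_{z_j}|_\clq$ for $i \neq j$.

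The key observation is that with these formulas, every entry of $\bm{D}_C^2$ has the uniform shape $P_\clq M_{z_i}^* P_\clw M_{z_j}|_\clq$, including the diagonal. Setting $X_j := P_\clw M_{z_j}|_\clq : \clq \raro \clw$ for each $j \in I_n$, we then have $(\bm{D}_C^2)_{ij} = X_i^* X_j$ for all $i, j \in I_n$ (the adjoint is $X_i^* = P_\clq M_{z_i}^* P_\clw : \clw \to \clq$). In other words, $\bm{D}_C^2 = \big(X_i^* X_j\big)_{n \times n}$, which is manifestly a Gram-type operator matrix. By \eqref{norm_equality}, for any $\tilde{h} = \begin{bmatrix}h_1 & \ldots & h_n\end{bmatrix}^t \in \clq^n$ we get
\[
\langle \bm{D}_C^2 \tilde{h}, \tilde{h}\rangle = \Big\| \sum_{j=1}^n X_j h_j \Big\|^2 \geq 0,
\]
which is exactly the desired positivity. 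Transporting back along the unitary equivalence $T \cong C$ gives $\bm{D}_T^2 \geq 0$.

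The main obstacle, and the step requiring care, is verifying that the diagonal entries really do fit the pattern $X_j^* X_j$ and that the matrix $\big(X_i^* X_j\big)$ is genuinely the operator $\bm{D}_C^2$ as defined — i.e. that the identification $(\bm{D}_C^2)_{ij} = X_i^* X_j$ holds simultaneously for all $(i,j)$ with the \emph{same} family $\{X_j\}$. The diagonal case needs \eqref{eqn: DjCP} combined with Lemma \ref{W=Wj} to upgrade $\clw_{\{j\}^c}$ to $\clw$; the off-diagonal case needs the identity \eqref{crosscommmmmmmmm} from Proposition \ref{corss commutator}. One should also double-check the adjoint computation $\big(P_\clq M_{z_i}^* P_\clw M_{z_j}|_\clq\big)^* = P_\clq M_{z_j}^* P_\clw M_{z_i}|_\clq$, which is immediate since $P_\clq$, $P_\clw$ are self-adjoint projections and $(M_{z_j}^*)^* = M_{z_j}$. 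Once these two formulas are in hand the positivity is purely formal via \eqref{norm_equality}; no further estimates or convergence arguments are needed, and the Beurling hypothesis enters only through the availability of those two clean expressions for the entries.
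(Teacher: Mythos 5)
Your proposal is correct and follows essentially the same route as the paper: reduce to the model tuple $C$ on a Beurling quotient module, use \eqref{eqn: DjCP} together with Lemma \ref{W=Wj} and \eqref{crosscommmmmmmmm} to write every entry of $\bm{D}_C^2$ as $P_\clq M_{z_i}^* P_\clw M_{z_j}|_\clq = X_i^* X_j$ with $X_j = P_\clw M_{z_j}|_\clq$, and conclude positivity from the Gram identity \eqref{norm_equality}. No substantive differences.
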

	
\begin{proof}
We again replace $T$ by $C$ on some Beurling quotient module $\clq \subseteq H^2_{\cle_*}(\D^n)$. Since the proof does not directly use the Beurling inner function, it will remain hidden. By \eqref{eqn: DjCP}, we know
\[
D_{i,C}^2=P_{\clq}M_{z_i}^*P_{\clw}M_{z_i}|_{\clq}.
\]
for all $i \in I_n$. Also, from \eqref{crosscommmmmmmmm}, we get
\[
\delta_{ij}(C) = P_{\clq}M_{z_i}^*P_{\clw}M_{z_j}|_{\clq},
\]
for $i,j \in I_n$ and $i \neq j$. This shows that
\[
\bm{D}_C^2=\begin{pmatrix}X_i^*X_j\end{pmatrix}_{n\times n},
\]
where $X_j=P_{\clw}M_{z_j}|_{\clq}$ and $j \in I_n$. Finally, by \eqref{norm_equality}, we have
\begin{equation}\label{norm_bound}
\la \bm{D}_C^2\tilde{f}, \tilde{f} \ra = \|P_{\clw}M_{z_1} f_1 + \cdots + P_{\clw}M_{z_n} f_n\|^2 \geq 0,
\end{equation}
for all $\tilde{f} \in \clq^n$, equivalently, $\bm{D}_C^2 \geq 0$.	
\end{proof}

Given a Beulring tuple $T \in \mathbb{S}_n^{B}(\clh)$, we write the non-negative square root of $\bm{D}_T^2$ as
\[
\bm{D}_T := (\bm{D}_T^2)^{\frac{1}{2}}.
\]
The corresponding \textit{joint defect space} $\bm{\cld}_T$ is defined by
\[
\bm{\cld}_T := \overline{\text{ran}}\bm{D}_T.
\]
Given a quotient module $\clq \subseteq H^2_{\cle_*}(\D^n)$, recall the definition of the wandering subspace $\clw$ (see \eqref{eqn: W_P}):
\[
\clw = \bigcap_{i=1}^n (\clq^\perp \ominus z_i \clq^\perp).
\]
If we assume, in addition, that $\clq$ is a Beurling quotient module, then we already proved in Proposition \ref{wandering_defect} that
\[
\clw = \text{clos} \Big\{\sum_{j=1}^{n}P_{\clw_{\{j\}^c}}M_{z_j}q_j : q_j \in\clq, j \in I_n \Big\}.
\]
We are now ready to connect $\clw$ and $\bm{\cld}_C$.

\begin{cor}\label{unitary}
Let $\clq$ be a Beurling quotient module of $H^2_{\cle_*}(\D^n)$. Then the operator 
\[
U(P_{\clw} M_{z_1} f_1 + \cdots + P_{\clw}M_{z_n} f_n) = \bm{D}_C \tilde{f},
\]
for all $\tilde{f} \in \clq^n$, extends to a unitary operator $U: \clw\raro \bm{\cld}_C$. In particular, $\dim \clw=\dim \bm{\cld}_C$.
\end{cor}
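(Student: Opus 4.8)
The plan is to read this off, almost formally, from the matrix description of $\bm{D}_C^2$ established in the proof of Proposition \ref{positive_defect} together with the identity for $\clw$ in Proposition \ref{wandering_defect}. As usual, I would first replace $T$ by the tuple of model operators $C=(C_1,\dots,C_n)$ on a Beurling quotient module $\clq\subseteq H^2_{\cle_*}(\D^n)$, so that $\bm{D}_C$ denotes the positive square root of $\bm{D}_C^2$ on $\clq^n$. Writing $X_j=P_{\clw}M_{z_j}|_{\clq}$ for $j\in I_n$, the proof of Proposition \ref{positive_defect} gives $\bm{D}_C^2=\begin{pmatrix}X_i^*X_j\end{pmatrix}_{n\times n}$, and hence, by \eqref{norm_equality} and self-adjointness of $\bm{D}_C$,
\[
\|\bm{D}_C\tilde f\|^2=\la \bm{D}_C^2\tilde f,\tilde f\ra=\Big\|\sum_{j=1}^{n}P_{\clw}M_{z_j}f_j\Big\|^2
\]
for every $\tilde f=\begin{bmatrix}f_1&\dots&f_n\end{bmatrix}^t\in\clq^n$. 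This identity is the engine of the whole argument.

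Next I would use it to define $U$ on the linear manifold $\clm:=\big\{\sum_{j=1}^{n}P_{\clw}M_{z_j}f_j:\tilde f\in\clq^n\big\}\subseteq\clw$ by setting $U\big(\sum_j P_{\clw}M_{z_j}f_j\big)=\bm{D}_C\tilde f$. Applying the displayed identity to $\tilde f-\tilde g$ shows simultaneously that $U$ is well defined (if the two left-hand vectors agree, then $\bm{D}_C\tilde f=\bm{D}_C\tilde g$) and that it is isometric on $\clm$, and linearity is clear. Its range is $\{\bm{D}_C\tilde f:\tilde f\in\clq^n\}=\text{ran}\,\bm{D}_C$, dense in $\bm{\cld}_C=\overline{\text{ran}}\,\bm{D}_C$; on the other side, Proposition \ref{wandering_defect} gives $\clw=\bigvee_{j\in I_n}P_{\clw}M_{z_j}\clq$, i.e. $\overline{\clm}=\clw$. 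Therefore $U$ is a densely defined isometry with dense range, so it extends uniquely to a surjective isometry $U:\clw\raro\bm{\cld}_C$, that is, a unitary, and $\dim\clw=\dim\bm{\cld}_C$ follows at once.

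I do not anticipate a serious obstacle: once Propositions \ref{positive_defect} and \ref{wandering_defect} are in hand the corollary is purely formal, and the only delicate point is the well-definedness of $U$, which the kernel argument above disposes of without having to construct an inverse by hand. The one hypothesis silently in force is minimality of $\clq$ — it enters through Proposition \ref{wandering_defect}, where it is exactly what is needed to recover $\clw$ as $\bigvee_j P_{\clw}M_{z_j}\clq$ — and the explicit form $U\big(\sum_j P_{\clw}M_{z_j}f_j\big)=\bm{D}_C\tilde f$ of the unitary is precisely what will be exploited later when assembling the characteristic function.
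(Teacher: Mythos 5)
Your proposal is correct and takes essentially the same route as the paper: the paper's one-line proof is precisely the combination of the norm identity \eqref{norm_bound} with the representation of $\clw$ coming from Proposition \ref{wandering_defect}, and you have simply written out the standard well-definedness, isometry, and density details that the paper leaves implicit. Your remark that minimality of $\clq$ is silently in force (it is the hypothesis of Proposition \ref{wandering_defect} that guarantees $\overline{\clm}=\clw$) is also accurate and worth keeping.
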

\begin{proof}
The result immediately follows from the above representation of $\clw$ and \eqref{norm_bound}.
\end{proof}

We make a general observation concerning Beurling tuples. Let $T\in\mathbb{S}_n^{B}(\clh)$. By Proposition \ref{corss commutator}, we know that $\overline{ran} \delta_{ij}(T) \subseteq \cld_{j,T}$ for all $i \neq j$. Moreover, Corollary \ref{product_0} implies that $\cld_{i,T} \cld_{j,T} = 0$ for all $i \neq j$. This implies
\[
\bm{\cld}_T \subseteq \bigoplus_{j \in I_n} \cld_{j,T}.
\]
Owing to the orthogonal property, we further infer that $(\bigoplus_{j \in I_n} \cld_{j,T})\subseteq \clh$. This yields the curious containment property:
\begin{equation}\label{eqn: D D H}
\bm{\cld}_T \subseteq \bigoplus_{j \in I_n} \cld_{j,T} \subseteq \clh.
\end{equation}

In other words, we can treat the joint defect space $\bm{\cld}_T$ as a closed subspace of $\clh$.

\section{Characteristic functions}\label{sec: ch fn}

After completing all the work for the construction of joint defect operators and spaces of Beurling tuples, we are now prepared for characteristic functions. Let $T \in \mathbb{S}_n^{B}(\clh)$. Recall from Theorem \ref{Dilation Thm} that the canonical dilation map $\Pi_T :\clh \raro H^2_{{\cld}_{T^*}}(\D^n)$ is an isometry and $\Pi_T T_j^* = M_{z_j}^*\Pi_T$ for all $j \in I_n$. For each $j \in I_n$, we define $\Delta_{M_{z_j},T_j} \in \clb(\clb(\clh, H^2_{{\cld}_{T^*}}(\D^n)))$ by
\[
\Delta_{M_{z_j},T_j} (X)=X-M_{z_j}XT_j^*,
\]
for all $X \in \clb(\clh, H^2_{{\cld}_{T^*}}(\D^n))$. Our first attempt at understanding the notion of characteristic functions involves a formulation of inner functions built from Beurling tuples. A priori, the representation of $\Theta_T$ in the following theorem remains less appealing due to the appearance of the canonical dilation map. However, we will make it more transparent and concrete in Theorem \ref{char fn}. 

\begin{thm}\label{main theorem}
Let $T \in \mathbb{S}_n^{B}(\clh)$. Then $\Theta_T:\D^n\raro  \clb(\bm{\cld}_T, {\cld}_{T^*})$ defined by
\[
(\Theta_T({w})) \bm{D}_T \tilde{h} = \sum_{i=1}^{n}\biggl(\Big[\prod_{j \in \{i\}^c} \Delta_{M_{z_j},T_j}(M_{z_i}\Pi_T-\Pi_TT_i)\Big] (h_i) \biggl)(w),
\]
for all $w \in \D^n$ and $\tilde{h} \in \clh^n$, is an inner function.
\end{thm}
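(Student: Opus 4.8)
The plan is to realize $\Theta_T$ as the symbol of a Beurling-type embedding in the spirit of the classical Sz.-Nagy--Foias construction, exploiting the model $T \cong C$ on a Beurling quotient module and the unitary $U:\clw \to \bm{\cld}_C$ from Corollary \ref{unitary}. First I would replace $T$ by the tuple of model operators $C = (C_1,\dots,C_n)$ on a minimal Beurling quotient module $\clq = \clq_\Theta \subseteq H^2_{\cle_*}(\D^n)$, which is legitimate since all ingredients ($\bm D_T$, $\cld_{T^*}$, the $\Delta_{M_{z_j},T_j}$) are unitary invariants; here $\cle_* = \cld_{T^*}$ and $\Pi_T$ becomes the inclusion $\clq \hookrightarrow H^2_{\cle_*}(\D^n)$. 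Under this identification $\Delta_{M_{z_j},C_j}(X) = X - M_{z_j} X C_j^* = (I - M_{z_j}P_\clq M_{z_j}^*)X = \ldots$ should collapse, after using $P_\clq M_{z_j}^* = P_\clq M_{z_j}^* P_{\clq^\perp}{}^\perp$ and the reducing property of Lemma \ref{lemma: Q redu}, to a compression involving the wandering projections $P_{\clw_{\{j\}^c}}$; the upshot I expect is that the right-hand side of the defining formula, evaluated at $h_i = $ the components of $\tilde h$, equals $\big(P_{\clw}M_{z_i} f_i\big)$ read inside $H^2_{\cle_*}(\D^n)$ via the reproducing-kernel action, paralleling the single-variable identity $\Theta_T(w)D_T = D_{T^*}(I-wT^*)^{-1}(wI-T)$ recalled in Section \ref{sec: basics}.

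The core steps are then: (i) show the formula \emph{well-defines} $\Theta_T(w)$ as a map $\bm{\cld}_T \to \cld_{T^*}$, i.e. that the right-hand side depends only on $\bm D_T \tilde h$ and not on $\tilde h$; this is where Corollary \ref{unitary} enters decisively, since $\ker \bm D_T = (\bm{\cld}_T)^\perp$ and one must check that the vector $\sum_i [\prod_{j\ne i}\Delta_{M_{z_j},C_j}(M_{z_i}\Pi_C - \Pi_C C_i)](h_i)$ vanishes whenever $\sum_i P_\clw M_{z_i}h_i = 0$. (ii) Establish analyticity and boundedness: the operators $\prod_{j\in\{i\}^c}\Delta_{M_{z_j},C_j}(M_{z_i}\Pi_C - \Pi_C C_i)$ are fixed bounded operators into $H^2_{\cle_*}(\D^n)$, so $w\mapsto (\,\cdot\,)(w)$ is analytic and, by the reproducing-kernel bound $\|f(w)\| \le \|f\| \, \mathbb S_n(w,w)^{1/2}$ combined with the norm identity \eqref{norm_bound}, locally bounded; to get membership in $H^\infty$ with the correct bound $\|\Theta_T\|_\infty \le 1$ I would instead argue that $M_{\Theta_T}$ (defined on $H^2$-level via these operators) is a contraction by directly writing it as $P_\clw$ acting after the shifts, using again \eqref{norm_bound} and the fact that $\|P_\clw M_{z_1}f_1 + \cdots + P_\clw M_{z_n}f_n\| \le \|(M_{z_1}f_1,\dots)\|$ after a telescoping/orthogonality estimate on the distinguished boundary.

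The heart of the matter — and the step I expect to be the main obstacle — is (iii) the \emph{inner} property $\Theta_T(\zeta)^*\Theta_T(\zeta) = I_{\bm{\cld}_T}$ for a.e.\ $\zeta \in \mathbb{T}^n$. The strategy here is the standard one: one shows that the multiplication operator $M_{\Theta_T}: H^2_{\bm{\cld}_T}(\D^n) \to H^2_{\cld_{T^*}}(\D^n)$ is an isometry with range exactly the Beurling submodule $\cls_\Theta = \clq^\perp$, which forces $\Theta_T$ to be inner. Concretely, I would show (a) $M_{\Theta_T}$ is isometric by computing $\|M_{\Theta_T}(\sum z^\alpha \otimes \xi_\alpha)\|^2$ using the minimality/wandering decomposition $\cls_\Theta = \bigoplus_\alpha z^\alpha \clw$ (S{\l}oci\'nski-type, via Lemma \ref{wandering}) and the unitary $U$ of Corollary \ref{unitary} identifying $\clw$ with $\bm{\cld}_C$; and (b) $\operatorname{ran} M_{\Theta_T} = \cls_\Theta$ by showing the generators $\prod_{j\ne i}\Delta_{M_{z_j},C_j}(M_{z_i}\Pi_C - \Pi_C C_i)(h_i)$ land in $\cls_\Theta$ and together with their $M_z$-shifts span it — the spanning being precisely Proposition \ref{wandering_defect} combined with Theorem \ref{Dilation Thm}'s minimality. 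The delicate point is matching the \emph{combinatorics} of the iterated $\Delta_{M_{z_j},C_j}$ products against the nested wandering subspaces $\clw_{\clp}$: each factor $\Delta_{M_{z_j},C_j}$ should peel off a $P_{\cls \ominus z_j\cls}$, so the full product over $j \in \{i\}^c$ produces $P_{\clw_{\{i\}^c}}$, and then one more application of the defining compression and Lemma \ref{W=Wj} ($P_\clw M_{z_i}|_\clq = P_{\clw_{\{i\}^c}}M_{z_i}|_\clq$) yields $P_\clw M_{z_i}$ — exactly the building blocks of $\clw$ in Proposition \ref{wandering_defect}. Once this bookkeeping is done, isometry of $M_{\Theta_T}$ is immediate from \eqref{norm_bound}/Corollary \ref{unitary}, and innerness follows since an isometric analytic multiplier is automatically pointwise isometric on $\mathbb{T}^n$.
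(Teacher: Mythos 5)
Your proposal is correct and follows essentially the same route as the paper: pass to the model tuple on a Beurling quotient module $\clq=\clq_\Theta$ (Theorem \ref{Beurling_qm}), show that each product $\prod_{j\in\{i\}^c}\Delta_{M_{z_j},T_j}$ applied to $M_{z_i}\Pi_T-\Pi_TT_i=P_{\cls}M_{z_i}\Pi_T$ peels off the wandering projections to give $P_{\clw_{\{i\}^c}}M_{z_i}\Pi_T$, and use Corollary \ref{unitary} together with Proposition \ref{wandering_defect} and Lemma \ref{W=Wj} to identify $\bm{\cld}_T$ with $\clw=\Theta\cle$. The only cosmetic difference is at the last step: the paper writes $\Theta_T=\Theta U^*$ with $U$ the constant unitary $\cle\to\bm{\cld}_T$ so innerness is immediate, whereas you re-derive the isometry of $M_{\Theta_T}$ from the wandering decomposition of $\cls_\Theta$ and then invoke the fact that an isometric multiplier is inner — the same fact in different clothing.
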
	
\begin{proof}
Since $T \in \mathbb{S}_n^{B}(\clh)$, by Theorem \ref{Beurling_qm}, $\clq:=\Pi_T\clh$ is a Beurling quotient module, and thus, there exist a Hilbert space $\cle$ and inner function $\Theta \in H^\infty_{\clb(\cle, \cld_{T^*})}(\D^n)$ such that
\begin{equation}\label{eqn: clq_theta}
\clq = \clq_\Theta.
\end{equation}
Our first goal is to compute the coefficient Hilbert space $\cle$ and connect it with our defect space of the second kind and wandering space constructed earlier. By part (1) of Lemma \ref{wandering}, we know that $\clw = \Theta \cle$; and by Proposition \ref{wandering_defect}, we also have
\[
\clw=\text{clos}\{\sum_{i=1}^{n} P_{\clw}M_{z_i}\Pi_T h_i :  h_i\in \clh, i \in I_n\}.
\]
In order to simplify $\clw$, for each $j \in I_n$, we pick $h_j\in \clh$. Now, $P_\clw M_{z_j}|_\clq = P_{\clw_{\{j\}^c}} M_{z_j}|_\clq$ by Lemma \ref{W=Wj}. Since $\Pi_T h_j \in \clq$, it follows that
\[
P_{\clw}M_{z_j}\Pi_T h_j = P_{\clw_{\{j\}^c}} M_{z_j}\Pi_T h_j.
\]
Recall that $\clw_{\{j\}^c} = \cap_{i \in \{j\}^c}(\Theta H^2_{\cle}(\D^n) \ominus z_i \Theta H^2_{\cle}(\D^n))$, and then
\begin{equation}\label{eqn: P_Wj}
P_{\clw_{\{j\}^c}} = \prod_{i \in \{j\}^c} M_{\Theta}(I-M_{z_i}M_{z_i}^*) M_{\Theta}^*,
\end{equation}
and hence
\[
P_{\clw} M_{z_j}\Pi_T h_j = M_{\Theta} \prod_{i \in \{j\}^c} (I-M_{z_i}M_{z_i}^*) M_{\Theta}^*M_{z_j}\Pi_T h_j.
\]
Summing over the $j$ then yields
\begin{equation}\label{eqn: sum of PWMPi}
\sum_{j=1}^{n} P_{\clw}M_{z_j}\Pi_T h_j = M_{\Theta} \Big[\sum_{j=1}^{n} \prod_{i \in \{j\}^c} (I-M_{z_i}M_{z_i}^*) M_{\Theta}^*M_{z_j}\Pi_T h_j \Big].
\end{equation}
As $\cle = M_\Theta^* \clw$, we must therefore have
\[
\cle=\text{clos} \left\{\sum_{i=1}^{n} \prod_{j \in \{i\}^c} (I-M_{z_j}M_{z_j}^*)M_{\Theta}^*M_{z_i}\Pi_T h_i : h_i\in \clh \right\},
\]
the needed representation of the coefficient space $\cle$. Now we connect this with the joint defect spaces as follows: Define $U:\cle\raro \bm{\cld}_T$ by
\[
U\left(\sum_{i=1}^{n} \prod_{j \in \{i\}^c}(I-M_{z_j}M_{z_j}^*)M_{\Theta}^*M_{z_i}\Pi_T h_i \right) = \bm{D}_T \tilde{h},
\]
for all $\tilde{h} \in \clh^n$. For each $\tilde{h} \in \clh^n$, by \eqref{eqn: sum of PWMPi}, we have
\[
\begin{split}
\Biggl\lvert\Biggl\lvert \left(\sum_{i=1}^{n} \prod_{j \in \{i\}^c}(I-M_{z_j}M_{z_j}^*)M_{\Theta}^*M_{z_i}\Pi_T h_i \right)\Biggl\rvert\Biggl\rvert&=
\Biggl\lvert\Biggl\lvert	\left(   \sum_{i=1}^{n} \prod_{j \in \{i\}^c}M_{\Theta}(I-M_{z_j}M_{z_j}^*)M_{\Theta}^*M_{z_i}\Pi_T h_i    \right)\Biggl\rvert\Biggl\rvert
\\
&=\Biggl\lvert\Biggl\lvert \sum_{j=1}^{n} P_{\clw_{\{j\}^c}} M_{z_j}\Pi_T h_j   \Biggl\rvert\Biggl\rvert
\\
&= \| \bm{D}_T \tilde{h} \|,
\end{split}
\]
where the last identity follows from Corollary \ref{unitary}. Therefore, $U$ extends to a unitary operator. Define
\begin{equation}\label{eqn: theta U}
\Theta_T(z):=\Theta(z)U^* \qquad (z \in \D^n).
\end{equation}
Clearly, $\Theta_T:\D^n \raro \clb(\bm{\cld}_T, \cld_{T^*})$ is an inner function and $M_{\Theta_{T}} \bm{D}_T \tilde{h} = M_\Theta U^* \bm{D}_T \tilde{h}$, $\tilde{h} \in \clh^n$. By the definition of $U$, this implies
\[
M_{\Theta_{T}} \bm{D}_T \tilde{h} =\sum_{i=1}^{n} \prod_{j \in \{i\}^c}M_{\Theta}(I-M_{z_j}M_{z_j}^*)M_{\Theta}^*M_{z_i}\Pi_T h_i =\sum_{i=1}^{n} P_{\clw_{\{i\}^c}} M_{z_i}\Pi_T h_i,
\]
where the final identity is due to \eqref{eqn: sum of PWMPi}. Noting $\Delta_{M_{z_t}} P_{\cls} = M_{z_t} P_\cls M_{z_t}^*$, $t \in I_n$, for each $i \neq j$ and $h_i \in \clh$, we compute
\[
((I-\Delta_{M_{z_j}})P_{\cls}) M_{z_i}\Pi_T h_i = (P_{\cls}- M_{z_j}P_{\cls} M_{z_j}^*)M_{z_i}\Pi_T h_i =P_{\cls}M_{z_i}\Pi_T h_i- M_{z_j}P_{\cls} M_{z_j}^* M_{z_i}\Pi_T h_i.
\]
We must simplify both terms of the final identity. We recall the dilation property that $\Pi_T^* M_{z_i} = T_i \Pi_T$ for all $i \in I_n$. In conjunction with $P_\cls = I - \Pi_T \Pi_T^*$, we streamline the first term as follows:
\[
P_{\cls}M_{z_i}\Pi_T h_i =(I-\Pi_T\Pi_T^*)M_{z_i}\Pi_T h_i =M_{z_i}\Pi_T h_i-  \Pi_T\Pi_T^* M_{z_i}\Pi_T h_i =M_{z_i}\Pi_T h_i- \Pi_TT_i h_i,
\]
where the second term simplifies as
\[
\begin{split}
M_{z_j}P_{\cls} M_{z_j}^* M_{z_i}\Pi_T h_i & = M_{z_j}(I-\Pi_T\Pi_T^*) M_{z_i}  M_{z_j}^* \Pi_T h_i
\\
&=M_{z_j}(I-\Pi_T\Pi_T^*) M_{z_i}  \Pi_T T_j^* h_i
\\
&=M_{z_j}M_{z_i}  \Pi_T T_j^* h_i-M_{z_j}\Pi_T(\Pi_T^* M_{z_i} \Pi_T) T_j^* h_i
\\
&=M_{z_j}M_{z_i}  \Pi_T T_j^* h_i-M_{z_j}\Pi_T T_iT_j^* h_i
\\
&=M_{z_j}(M_{z_i}\Pi_T - \Pi_TT_i ) T_j^* h_i.
\end{split}
\]
Summarizing the above two sets of simplification, we have
\[
\begin{split}
[( I-\Delta_{M_{z_j}}) P_{\cls}] M_{z_i}\Pi_T h_i &=(M_{z_i}\Pi_T -  \Pi_TT_i ) h_i -M_{z_j}(M_{z_i}\Pi_T - \Pi_TT_i ) T_j^* h_i
\\
&=\left[\Delta_{M_{z_j},T_j}(M_{z_i}\Pi_T -  \Pi_TT_i )\right] h_i.
\end{split}
\]
We can rewrite \eqref{eqn: P_Wj} as (note that $P_\cls = M_{\Theta}M_{\Theta}^*$)
\[
P_{\clw_{\{i\}^c}} = \prod_{j \in \{i\}^c}\left( I-\Delta_{M_{z_j}} \right)(P_{\cls}),
\]
for all $i \in I_n$, and hence, in order to simplify $M_{\Theta_{T}} \bm{D}_T \tilde{h}$, we compute
\[
P_{\clw_{\{i\}^c}} M_{z_i}\Pi_T h_i = \left[\prod_{j \in \{i\}^c}\left( I-\Delta_{M_{z_j}} \right)(P_{\cls})\right]M_{z_i}\Pi_T h_i = \left[\prod_{j \in \{i\}^c} \Delta_{M_{z_j},T_j}(M_{z_i}\Pi_T -  \Pi_TT_i )\right] h_i,
\]
for all $i \in I_n$ and $h_i \in \clh$, and consequently
\[
M_{\Theta_{T}} \bm{D}_T \tilde{h} = \sum_{i=1}^{n} P_{\clw_{\{i\}^c}} M_{z_i}\Pi_T h_i = \sum_{i=1}^{n} \left[\prod_{j \in \{i\}^c} \Delta_{M_{z_j},T_j}(M_{z_i}\Pi_T -  \Pi_TT_i )\right] h_i,
\]
which completes the proof of the theorem.
\end{proof}

The function $\Theta_T$ defined in Theorem \ref{main theorem} is our candidate for the characteristic function of the tuple $T$. However, we cannot explicitly define the function at this time because it involves the canonical isometric dilation map. The following theorem eliminates this drawback. It also yields concrete analytic models for Beurling tuples. 

\begin{thm}\label{char fn}
Let $T \in \mathbb{S}_n^{B}(\clh)$. Then the operator-valued analytic function $\Theta_{T}$ presented in Theorem \ref{main theorem} is given by
\[
\Theta_{T}(w)\bm{D}_T \tilde{h} = D_{T^*}\prod_{k=1}^{n}(I_{\clh}-w_kT_k^*)^{-1}\sum_{j=1}^{n}(w_jI_{\clh}-T_j)\prod_{i \in \{j\}^c} (I_{\clh}-w_iT_i^*)h_j,
\]
for all $w \in \D^n$ and $\tilde{h} \in \clh^n$. Moreover, $T \cong C_{\clq_{\Theta_T}}$, where $C_{\clq_{\Theta_T}}$ is the tuple of model operators defined on
\[
\clq_{\Theta_{T}}=H^2_{\cld_{T^*}}(\D^n)\ominus \Theta_T H^2_{\bm{\cld}_T}(\D^n).
\]
\end{thm}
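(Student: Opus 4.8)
The plan is to deduce Theorem \ref{char fn} from Theorem \ref{main theorem} by unwinding the operators $\Delta_{M_{z_j},T_j}$ and $M_{z_i}\Pi_T - \Pi_T T_i$ into the explicit reproducing-kernel form, and then to read off the model statement from the construction in the proof of Theorem \ref{main theorem}. First I would record the two basic identities that drive everything: for $h \in \clh$ and $w \in \D^n$, the canonical dilation gives $(\Pi_T h)(w) = D_{T^*}\prod_{k=1}^n (I-w_kT_k^*)^{-1}h$, hence $(M_{z_i}\Pi_T h)(w) = w_i D_{T^*}\prod_k (I-w_kT_k^*)^{-1}h$ and $(\Pi_T T_i h)(w) = D_{T^*}\prod_k (I-w_kT_k^*)^{-1}T_i h$, so that
\[
\big((M_{z_i}\Pi_T - \Pi_T T_i)h\big)(w) = D_{T^*}\prod_{k=1}^n (I-w_kT_k^*)^{-1}(w_i I - T_i)h.
\]
Next, since $(M_{z_j}X T_j^* h)(w) = w_j (X T_j^* h)(w)$, the operator $\Delta_{M_{z_j},T_j}$ acts on a function of the form $w \mapsto D_{T^*}\prod_k (I-w_kT_k^*)^{-1}A_j(w) h$ (with $A_j(w)$ a product of factors $(w_\ell I - T_\ell)$ and $(I - w_\ell T_\ell^*)$ not involving the index $j$) by sending it to $w \mapsto D_{T^*}\prod_k(I-w_kT_k^*)^{-1}(I - w_j T_j^*)A_j(w)h$; the point is that $\prod_k (I-w_kT_k^*)^{-1}$ commutes past the scalar $w_j$ and absorbs a factor $(I-w_jT_j^*)$ — here one uses that the various $T_\ell$ commute, so the ordering of the accumulated factors is immaterial. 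Applying this for all $j \in \{i\}^c$ to the $i$-th summand of Theorem \ref{main theorem} produces exactly $w \mapsto D_{T^*}\prod_k (I-w_kT_k^*)^{-1}\big(\prod_{j \in \{i\}^c}(I-w_jT_j^*)\big)(w_i I - T_i)h_i$, and summing over $i$ (relabelling $i \to j$) yields the claimed formula.

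The second assertion, $T \cong C_{\clq_{\Theta_T}}$, I would extract from the proof of Theorem \ref{main theorem} essentially for free: there $\clq := \Pi_T\clh$ was shown to be a Beurling quotient module equal to $\clq_\Theta$ for an inner $\Theta \in H^\infty_{\clb(\cle,\cld_{T^*})}(\D^n)$, the unitary $U: \cle \to \bm{\cld}_T$ was constructed, and $\Theta_T(z) = \Theta(z)U^*$ was defined in \eqref{eqn: theta U}. Because $U$ is unitary, $\Theta_T H^2_{\bm{\cld}_T}(\D^n) = \Theta M_U^* H^2_{\bm{\cld}_T}(\D^n) = \Theta H^2_\cle(\D^n) = \cls_\Theta$, whence $\clq_{\Theta_T} = H^2_{\cld_{T^*}}(\D^n) \ominus \Theta_T H^2_{\bm{\cld}_T}(\D^n) = H^2_{\cld_{T^*}}(\D^n)\ominus \cls_\Theta = \clq_\Theta = \Pi_T\clh = \clq$. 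Finally, by Theorem \ref{Dilation Thm} together with \eqref{eqn: T and CT}, $\Pi_T$ is an isometry intertwining $T_i^*$ with $M_{z_i}^*$ and with range $\clq$, so $T$ is jointly unitarily equivalent to $C_\clq = C_{\clq_{\Theta_T}}$, which is the desired model.

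The only genuinely delicate point is the bookkeeping in the first paragraph: one must check that the factors $(I - w_j T_j^*)$ introduced by the successive applications of $\Delta_{M_{z_j},T_j}$, $j \in \{i\}^c$, can all be swept to the left into $\prod_k(I-w_kT_k^*)^{-1}$ regardless of the order in which the $\Delta$'s are applied, and that no factor $(I-w_iT_i^*)$ sneaks in (it does not, since $i \notin \{i\}^c$). This is where commutativity of $T$ is used in an essential way, and it is worth stating as an explicit lemma or at least spelling out for the product $\Delta_{M_{z_{j_1}},T_{j_1}}\cdots\Delta_{M_{z_{j_{n-1}}},T_{j_{n-1}}}$ acting on $M_{z_i}\Pi_T - \Pi_T T_i$ before specializing. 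Everything else is a routine, if slightly lengthy, manipulation of geometric-series resolvents, and the passage to the model statement is immediate from the already-established contents of the proof of Theorem \ref{main theorem}.
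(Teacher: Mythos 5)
Your overall route is the same as the paper's: evaluate $M_{z_i}\Pi_T - \Pi_T T_i$ pointwise via the resolvent form of $\Pi_T$, peel off the operators $\Delta_{M_{z_j},T_j}$ one at a time, and read the model statement off the construction already carried out in the proof of Theorem \ref{main theorem}. Your second paragraph (the identification $\clq_{\Theta_T}=\clq_\Theta=\Pi_T\clh$ via $\Theta_T=\Theta U^*$ and the intertwining of $\Pi_T$) is correct and essentially identical to the paper's.

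There is, however, a genuine error in the bookkeeping of your first paragraph. If $(Xh)(w)=D_{T^*}\prod_{k}(I-w_kT_k^*)^{-1}A(w)h$, then since $(M_{z_j}XT_j^*h)(w)=w_j\,D_{T^*}\prod_{k}(I-w_kT_k^*)^{-1}A(w)T_j^*h$, one gets
\[
\big(\Delta_{M_{z_j},T_j}(X)h\big)(w) \;=\; D_{T^*}\prod_{k=1}^{n}(I-w_kT_k^*)^{-1}\,A(w)\,(I-w_jT_j^*)\,h,
\]
so the new factor $(I-w_jT_j^*)$ is appended on the \emph{far right}, after $A(w)$; nothing is ``absorbed'' into the resolvent. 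You instead insert it between the resolvent and $A(w)$ and then appeal to commutativity of $T$ to declare the ordering immaterial. That appeal fails: moving $(I-w_jT_j^*)$ past the factor $(w_iI-T_i)$ sitting inside $A(w)$ requires $T_iT_j^*=T_j^*T_i$, i.e.\ double commutativity, which is not assumed for $T\in\mathbb{S}_n^B(\clh)$ — indeed the off-diagonal entries $\delta_{ij}(T)$ of $\bm{D}_T^2$ are built precisely from the generally nonzero commutators $[T_j,T_i^*]$. Consequently the expression you arrive at, $\prod_{j\in\{i\}^c}(I-w_jT_j^*)\,(w_iI-T_i)h_i$, is not the same operator as the $(w_iI-T_i)\prod_{j\in\{i\}^c}(I-w_jT_j^*)h_i$ required by the statement. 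The fix is that no rearrangement is needed at all: with the factors correctly appended on the right, the $i$-th summand becomes $(w_iI-T_i)\prod_{j\in\{i\}^c}(I-w_jT_j^*)h_i$ directly. The appended factors $(I-w_jT_j^*)$, $j\in\{i\}^c$, do commute among themselves (the $T_j^*$ commute), so the order in which the $\Delta$'s are applied is immaterial — but only among themselves, not relative to $(w_iI-T_i)$, which must stay on the left. With that correction your argument coincides with the paper's.
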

	
\begin{proof}
In order to shorten notation, for each $i \in I_n$, we define
\[
\Pi_{z_i, T}:= M_{z_i} \Pi_T - \Pi_T M_{z_i}.
\]
The expression of $\Theta_T$ in Theorem \ref{main theorem} then becomes
\begin{equation}\label{eqn: MThetaTDT}
M_{\Theta_{T}} \bm{D}_T \tilde{h} = \sum_{i=1}^{n} \left[\prod_{j \in \{i\}^c}\Delta_{M_{z_j},T_j}\Pi_{z_i, T} \right] h_i,
\end{equation}
for all $\tilde{h} \in \clh^n$. For each $i\in I_n$, $\tilde{h} \in \clh^n$, and $w\in \D^n$, we get
\[
\begin{split}
(\Pi_{z_i, T} h_i)(w )& = w_i {D}_{T^*} \prod_{k=1}^{n}(I_{\clh}-w_k T_k^*)^{-1} h_i
- {D}_{T^*}\prod_{k=1}^{n}(I_{\clh}-w_k T_k^*)^{-1} T_ih_i
\\
& = {D}_{T^*}\prod_{k=1}^{n}(I_{\clh}-w_k T_k^*)^{-1}\left( w_iI_{\clh}-T_i \right)h_i.
\end{split}
\]
For each $i \neq j$, we have $\Delta_{M_{z_j},T_j} \Pi_{z_i, T} = \Pi_{z_i, T} - M_{z_j} \Pi_{z_i, T} T_j^*$, and hence
\[
\left(\Delta_{M_{z_j},T_j} \Pi_{z_i, T} h_i \right)(w) = D_{T^*}\prod_{k=1}^{n}(I_{\clh}-w_k T_k^*)^{-1}\left( w_iI_{\clh}-T_i \right)(I_{\clh}-w_jT_j^*)h_i,
\]
which implies
\[
\left[\left(\prod_{j \in \{i\}^c} \Delta_{M_{z_j},T_j} \Pi_{z_i, T} \right)(h_i)\right](w) = {D}_{T^*}\prod_{k=1}^{n}(I-w_k T_k^*)^{-1}\left( w_iI-T_i \right)  \prod_{j \in \{i\}^c} (I-w_jT_j^*)h_i.
\]
Therefore
\[
\sum_{i=1}^{n} \Big[\prod_{j \in \{i\}^c}\Delta_{M_{z_j},T_j} \Pi_{z_i, T}\Big] h_i = {D}_{T^*}\prod_{k=1}^{n}(I_{\clh}-w_k T_k^*)^{-1} \sum_{i=1}^n \left( w_i I_{\clh}-T_i \right)  \prod_{j \in \{i\}^c} (I_{\clh}-w_jT_j^*)h_i.
\]
We now arrive at the result by comparing this with the identity \eqref{eqn: MThetaTDT}. For the final part, we recall from \eqref{eqn: theta U} that $\Theta_T(z) =\Theta(z)U^*$ for all $z \in \D^n$. In other words, $M_{\Theta_T} = M_\Theta M_U$, where $M_U$ denotes the constant unitary operator. Then
\[
M_{\Theta_T}M_{\Theta_T}^* = M_{\Theta} M_U^* M_U M_{\Theta}^*=M_{\Theta}M_{\Theta}^* = P_{\Theta H^2_{\cle}(\D^n)},
\]
where $\Theta$ is as in \eqref{eqn: clq_theta} in the proof of Theorem \ref{main theorem}. Also recall from the proof of Theorem \ref{main theorem} that $\clq = \Pi_T \clh$ and
\[
\clq = \clq_\Theta = (\Theta H^2_{\cle}(\D^n))^\perp,
\]
and consequently, $\Pi_T \Pi_T^* = I - M_{\Theta_T}M_{\Theta_T}^*$. This proves that $\Pi_T\clh = \clq = \clq_{\Theta_{T}}$. Finally, $\Pi_T: \clh \raro \Pi_T \clh = \clq_{\Theta_{T}}$ is the required unitary satisfying the intertwining property that
\[
\Pi_T T_i^* = (P_{\clq_{\Theta_T}} M_{z_i}^*|_{\clq_{\Theta_T}}) \Pi_T,
\]
for all $i \in I_n$. This completes the proof of the theorem.
\end{proof}

We are now in a position to define the inner characteristic functions attached to Beurling tuples:
 
\begin{defn}\label{def: char funct}
Let $T \in \mathbb{S}_n^B(\clh)$. The characteristic function of $T$ is the operator-valued analytic function $\Theta_T : \D^n \raro \clb(\bm{\cld}_T, \cld_{T^*})$ defined by
\[
\Theta_{T}(w)\bm{D}_T \tilde{h} = D_{T^*}\prod_{k=1}^{n}(I_{\clh}-w_kT_k^*)^{-1}\sum_{j=1}^{n}(w_jI_{\clh}-T_j)\prod_{i \in \{j\}^c} (I_{\clh}-w_iT_i^*)h_j,
\]
for all $w \in \D^n$ and $\tilde{h} \in \clh^n$.
\end{defn}

We remind the reader once again that $T \in \mathbb{S}_n^B(\clh)$ is a necessary condition for the tuple $T$ to admit a characteristic function.

It is now useful to compare the above definition for $n > 1$ with the classical Sz.-Nagy and Foias's characteristic functions in the format outlined at the end of Section \ref{sec: basics}. Of course, the occurrences of additional terms and factors that appeared in the above due to the effect of $n > 1$.

One may naturally question the effectiveness of our characteristic functions. Here, we aim to prove that these functions serve as a complete unitary invariant, aligning with the classical case. In particular, this result establishes that the construction of the characteristic function thus far is a canonical choice, extending to representations of inner Schur functions on the polydisc. First, we introduce the notion of equality at the level of operator-valued analytic functions.
	
\begin{defn}\label{def: coinc}
Let $T=(T_1,\dots,T_n) \in \mathbb{S}_n^B(\clh)$ and $S=(S_1,\dots,S_n) \in \mathbb{S}_n^B(\clk)$ be two Beurling tuples. We say that the characteristic functions $\Theta_T$ and $\Theta_S$ coincide if there exist unitaries $\tau:\bm{\cld}_S\raro\bm{\cld}_{T}$ and $\tau_*: \cld_{S^*}\raro \cld_{T^*}$ such that
\[
\Theta_T(w) = \tau_* \Theta_S(w) \tau^*,
\]
for all $w\in \D^n$. In short, we write this as $\Theta_T \cong \Theta_S$.
\end{defn}

We prove that the notions of joint unitarity equivalence and coinciding characteristic functions are just the same. 

\begin{thm}\label{thm: compl unit inv}
Let $T \in \mathbb{S}_n^B(\clh)$ and let $S \in \mathbb{S}_n^{B}(\clk)$. Then $T\cong S$ if and only if $\Theta_T \cong \Theta_S$.
\end{thm}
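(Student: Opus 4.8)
The plan is to prove the two implications separately, mirroring the classical Sz.-Nagy--Foias argument but built on the canonical dilation $\Pi_T$ and the coefficient-space identifications established in the proofs of Theorems \ref{main theorem} and \ref{char fn}.

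\textbf{The easy direction: $T \cong S$ implies $\Theta_T \cong \Theta_S$.} Suppose $V: \clh \raro \clk$ is a unitary with $V T_i V^* = S_i$ for all $i \in I_n$. First I would observe that $V$ intertwines all the derived objects: $V D_{T_j} V^* = D_{S_j}$, $V D_{j,T}^2 V^* = D_{j,S}^2$ (since the truncated defect operators are built functorially from the $\Delta_{T_k}$ and $D_{T_j}^2$ via \eqref{eqn: D jT}), and likewise $V \delta_{ij}(T) V^* = \delta_{ij}(S)$ by Definition \ref{def: joint comm}. Consequently the ampliation $V^{(n)} := \bigoplus_{k=1}^n V : \clh^n \raro \clk^n$ satisfies $V^{(n)} \bm{D}_T^2 (V^{(n)})^* = \bm{D}_S^2$, hence $V^{(n)} \bm{D}_T (V^{(n)})^* = \bm{D}_S$, so $\tau := V^{(n)}|_{\bm{\cld}_T} : \bm{\cld}_T \raro \bm{\cld}_S$ is a unitary (using the embedding \eqref{eqn: D D H}). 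Similarly $\mathbb{S}_n^{-1}(S,S^*) = V \mathbb{S}_n^{-1}(T,T^*) V^*$ gives a unitary $\tau_* := V|_{\cld_{T^*}} : \cld_{T^*} \raro \cld_{S^*}$. Then I would plug $S_j = V T_j V^*$ into the closed formula of Definition \ref{def: char funct} for $\Theta_S$ and push the $V$'s through the resolvents $(I - w_k S_k^*)^{-1} = V(I - w_k T_k^*)^{-1}V^*$ and the factors $(w_j I - S_j)$, $(I - w_i S_i^*)$, to get $\Theta_S(w) \tau = \tau_* \Theta_T(w)$ on the range of $\bm{D}_T$, i.e. $\Theta_T \cong \Theta_S$ in the sense of Definition \ref{def: coinc} with roles of $\tau,\tau_*$ adjusted by taking adjoints.

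\textbf{The hard direction: $\Theta_T \cong \Theta_S$ implies $T \cong S$.} Here the strategy is to go through the functional models. By Theorem \ref{char fn}, $T \cong C_{\clq_{\Theta_T}}$ on $\clq_{\Theta_T} = H^2_{\cld_{T^*}}(\D^n) \ominus \Theta_T H^2_{\bm{\cld}_T}(\D^n)$, and similarly $S \cong C_{\clq_{\Theta_S}}$ on $\clq_{\Theta_S} = H^2_{\cld_{S^*}}(\D^n) \ominus \Theta_S H^2_{\bm{\cld}_S}(\D^n)$. So it suffices to show: if $\Theta_T(w) = \tau_* \Theta_S(w) \tau^*$ for unitaries $\tau : \bm{\cld}_S \raro \bm{\cld}_T$, $\tau_* : \cld_{S^*} \raro \cld_{T^*}$, then the tuples of model operators on $\clq_{\Theta_S}$ and $\clq_{\Theta_T}$ are jointly unitarily equivalent. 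For this I would use the unitaries $M_{\tau_*} : H^2_{\cld_{S^*}}(\D^n) \raro H^2_{\cld_{T^*}}(\D^n)$ and $M_\tau : H^2_{\bm{\cld}_S}(\D^n) \raro H^2_{\bm{\cld}_T}(\D^n)$ induced by the constant-function multipliers (these commute with each $M_{z_i}$). The coincidence relation gives $M_{\tau_*} M_{\Theta_S} = M_{\Theta_T} M_\tau$, hence $M_{\tau_*}$ maps $\Theta_S H^2_{\bm{\cld}_S}(\D^n)$ onto $\Theta_T H^2_{\bm{\cld}_T}(\D^n)$, so it maps $\clq_{\Theta_S}$ onto $\clq_{\Theta_T}$. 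The restriction $U_0 := M_{\tau_*}|_{\clq_{\Theta_S}} : \clq_{\Theta_S} \raro \clq_{\Theta_T}$ is a unitary, and because $M_{\tau_*} M_{z_i}^* = M_{z_i}^* M_{\tau_*}$ and both subspaces are $M_{z_i}^*$-invariant, one checks $U_0 (P_{\clq_{\Theta_S}} M_{z_i}|_{\clq_{\Theta_S}}) = (P_{\clq_{\Theta_T}} M_{z_i}|_{\clq_{\Theta_T}}) U_0$ for each $i$. Composing with the unitary equivalences from Theorem \ref{char fn} yields $S \cong T$.

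\textbf{Main obstacle.} The genuinely delicate point is verifying that the coincidence of characteristic functions forces $M_{\tau_*}$ to carry the model \emph{submodule} exactly onto the model submodule, i.e. that $M_{\tau_*}(\Theta_S H^2_{\bm{\cld}_S}(\D^n)) = \Theta_T H^2_{\bm{\cld}_T}(\D^n)$ as sets and not merely up to a closure or up to a nontrivial reducing piece. In one variable this is automatic because $\Theta_T, \Theta_S$ are \emph{pure} inner functions with no constant unitary summand; in the polydisc one must confirm the analogous purity — which in our setup is exactly the minimality property of the dilation (Theorem \ref{Dilation Thm}) transported through the identification $\clq = \Pi_T \clh = \clq_{\Theta_T}$ established in Theorem \ref{char fn}. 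I would therefore record as a lemma (or extract from the proof of Theorem \ref{char fn}) that $\Theta_T$ built from a Beurling tuple always gives a minimal Beurling quotient module, so that Lemma \ref{minimal} applies and no spurious reducing summand can appear; this is what makes the intertwining unitary $U_0$ well-defined and the argument airtight.
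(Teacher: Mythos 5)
Your proposal is correct and follows essentially the same route as the paper: the forward direction pushes the implementing unitary $\sigma$ through the closed formula of Definition \ref{def: char funct} exactly as the paper does, and the converse direction rests on the operator identity $M_{\tau_*}M_{\Theta_S}=M_{\Theta_T}M_{\tau}$ (which the paper verifies via the reproducing-kernel relation $M_{\Theta_T}^*(\mathbb{S}_n(\cdot,w)\otimes\eta)=\mathbb{S}_n(\cdot,w)\otimes\Theta_T(w)^*\eta$) to carry one model space onto the other and intertwine the compressed shifts. One remark: the ``main obstacle'' you flag is not actually an obstacle here --- since $\tau$ is unitary, $I\otimes\tau$ maps $H^2_{\bm{\cld}_S}(\D^n)$ \emph{onto} $H^2_{\bm{\cld}_T}(\D^n)$, so
\[
(I\otimes\tau_*)\,\Theta_S H^2_{\bm{\cld}_S}(\D^n)=M_{\Theta_T}(I\otimes\tau)H^2_{\bm{\cld}_S}(\D^n)=\Theta_T H^2_{\bm{\cld}_T}(\D^n)
\]
holds on the nose, and no minimality or purity lemma is required (the paper invokes none); that issue would only arise if one tried to recover the unitaries $\tau,\tau_*$ from a bare unitary equivalence of quotient modules rather than from the given coincidence data.
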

\begin{proof}
Assume that $\Theta_T \cong \Theta_S$, that is, there are unitary operators $\tau:\bm{\cld}_S\raro\bm{\cld}_{T}$ and $\tau_*: {\cld}_{S^*}\raro {\cld}_{T^*}$ such that $\tau_*^* \Theta_T(w)\tau=\Theta_S(w)$ for all $w\in \D^n$. Recall that $\mathbb{S}_n$ denotes the Szeg\"{o} kernel of $\D^n$. For each $w \in \D^n$, we define the kernel function $\mathbb{S}_n(\cdot, w)$ by
\[
(\mathbb{S}_n(\cdot, w))(z) = \mathbb{S}_n(z, w) \qquad (z \in \D^n).
\]
Let $w\in \D^n$ and $\eta\in {\cld}_{S^*}$. We use the standard reproducing kernel Hilbert space property: Since $\Theta_T$ is a Schur function (or a multiplier), it follows that
\[
M_{\Theta_T}^* (\mathbb{S}_n(\cdot, w) \otimes \eta) = \mathbb{S}_n(\cdot, w) \otimes \Theta_T(w)^* \eta,
\]
for all $w \in \D^n$ and $\eta \in {\cld}_{S^*}$. Note that $M_{\Theta_S}^*$ also shares a similar identity. For each $w \in \D^n$ and $\eta \in \cld_{S^*}$, we compute
\[
\begin{split}
(I\otimes \tau^*)M_{\Theta_T}^*(I\otimes \tau_*)(\mathbb{S}_n(\cdot, w) \otimes \eta) & = (I\otimes \tau^*)M_{\Theta_T}^*(\mathbb{S}_n(\cdot, w) \otimes \tau_*\eta)
\\
&=(I\otimes \tau^*)(\mathbb{S}_n(\cdot, w) \otimes\Theta_T(w)^*\tau_*\eta)
\\
& = \mathbb{S}_n(\cdot, w)\otimes(\tau^* \Theta_T(w)^*\tau_*\eta)
\\
&= \mathbb{S}_n(\cdot, w) \otimes \Theta_T(w)^*\eta
\\
&=M_{\Theta_S}^*(\mathbb{S}_n(\cdot, w) \otimes \eta).
\end{split}
\]
Thus $(I\otimes \tau^*)M_{\Theta_T}^*(I\otimes \tau_*)=M_{\Theta_S}^*$, which implies
\[
(I\otimes \tau^*_*)	M_{\Theta_T}M_{\Theta_T}(I\otimes \tau_*)=M_{\Theta_S}M_{\Theta_S}^*.
\]
In particular,
\[
P_{\clq_{\Theta_T}}(I\otimes \tau_*)=(I\otimes \tau_*)P_{\clq_{\Theta_S}}.
\]
Moreover, for each $j \in I_n$, we have
\[
\begin{split}
(I\otimes \tau_*)P_{\clq_{\Theta_{S}}}M_{z_j}P_{\clq_{\Theta_{S}}}
&=P_{\clq_{\Theta_T}}(I\otimes \tau_*)M_{z_j}P_{\clq_{\Theta_{S}}}
\\
&=P_{\clq_{\Theta_T}} M_{z_j} (I\otimes \tau_*)P_{\clq_{\Theta_{S}}}
\\
&= P_{\clq_{\Theta_T}} M_{z_j}  P_{\clq_{\Theta_T}}(I\otimes \tau_*).
\end{split}
\]
This shows that
\[
(P_{\clq_{\Theta_{S}}}M_{z_1}|_{\clq_{\Theta_{S}}},\dots, P_{\clq_{\Theta_{S}}}M_{z_n}|_{\clq_{\Theta_{S}}}) \cong (P_{\clq_{\Theta_T}} M_{z_1}|_{\clq_{\Theta_T}},\dots, P_{\clq_{\Theta_T}} M_{z_n}  |_{\clq_{\Theta_T}}),
\]
equivalently, $T \cong S$. For the converse, suppose that there is a unitary $\sigma \in \clb(\clh, \clk)$ such that
\[
\sigma^*S_j\sigma =T_j,
\]
for all $j \in I_n$. Then
\[
\sigma^* {D}_{S^*}\sigma = {D}_{T^*},
\]
and consequently, $\tau_*: {\cld}_{T^*}\raro {\cld}_{S^*}$ is a unitary map, where
\[
\tau_* ({D}_{T^*}h) = {D}_{S^*}\sigma h \qquad (h\in \clh).
\]
Also, we have the unitary operator $\Sigma = \sigma \oplus \cdots \oplus \sigma \in \clb(\clh^n, \clk^n)$ satisfying
\[
\Sigma^* {D}_{T}\Sigma = {D}_{S}.
\]
Similarly, we define the other unitary operator $\tau: \bm{\cld}_T\raro \bm{\cld}_S$ by
\[
\tau \left(\bm{D}_{T}\tilde{h} \right) = \bm{D}_{S}\Sigma \tilde{h} \qquad (\tilde{h} \in \clh^n).
\]
For each $\tilde{h} \in \clh^n$ and $w \in \D^n$, we finally compute
\[
\begin{split}
\tau^*_*\Theta_{S}(w)\tau \bm{D}_T\tilde{h}&= \tau_*^*\Theta_{S}(w)\bm{D}_{S}\Sigma \tilde{h}
\\
&=\tau^*_* D_{S^*}\prod_{k=1}^{n}(I_{\clh}-w_kS_k^*)^{-1}\sum_{j=1}^{n}(w_jI_{\clh}-S_j)\prod_{i \in \{j\}^c} (I_{\clh}-w_iS_i^*)\sigma h_j
\\
&=\tau^*_* D_{S^*}\sigma\prod_{k=1}^{n}(I_{\clh}-w_kT_k^*)^{-1}\sum_{j=1}^{n}(w_jI_{\clh}-T_j)\prod_{i \in \{j\}^c} (I_{\clh}-w_iT_i^*)h_j
\\
&=D_{T^*}\prod_{k=1}^{n}(I_{\clh}-w_kT_k^*)^{-1}\sum_{j=1}^{n}(w_jI_{\clh}-T_j)\prod_{i \in \{j\}^c} (I_{\clh}-w_iT_i^*) h_j
\\
&=\Theta_T(w) \bm{D}_T\tilde{h},
\end{split}
\]
which implies $\tau_*^*\Theta_{S}(w)\tau=\Theta_T(w)$, thereby completing the proof of the theorem.
\end{proof}

When $n=1$, this recovers the classical Sz.-Nagy and Foias's result of contractions at the level of $C_{\cdot 0}$ contractions.

\section{Concluding remarks}\label{sec: classical}

In this concluding section, we bring out results that are both relevant to our context and arise as applications of the observations obtained thus far. We hope that some of these results will prove useful in addressing other problems, such as commutators for commuting tuples of operators. We begin by comparing the characteristic functions derived in this paper with the classical ones.

\subsection{On classical models}\label{sub sect: classical models} This subsection aims to show how our results on the $n$-polydisc align with the classical result of Sz-Nagy and Foias, assuming $n=1$. Suppose $T$ is a pure contraction on $\clh$ (that is, $T \in C_{\cdot 0}$). In this case, the canonical dilation map $\Pi_T:\clh\raro H^2_{\cld_{T^*}}(\D)$ is given by (see Theorem \ref{Dilation Thm})
\[
(\Pi_T h)(z)=D_{T^*}(I_{\clh}-zT^*)^{-1}h,
\]
for all $h\in \clh$ and $z\in \D$. Moreover, $\Pi_T$ is an isometry, and $\Pi_T T^*=M_z^*\Pi_T$. Set $\clq:=\Pi_T\clh$. As $\clq$ is a quotient module, there exist a Hilbert space $\cle$ and an inner function $\Theta \in H^{\infty}_{\clb(\cle,\cld_{T^*})}(\D)$ such that
\[
\cls:=\clq^{\perp} =\Theta H^2_{\cle}(\D).
\]
We know that $\cls\ominus z\cls=\Theta \cle$. Moreover, as in the proof of Proposition \ref{wandering_defect}, one can prove that $ \text{clos}(P_{\cls}M_z\clq) = \cls\ominus z\cls$. Therefore,
\[
\cle = \text{clos}(M_{\Theta}^*M_z\Pi_T).
\]
Since $P_\cls = I - \Pi_T \Pi_T^*$, it follows that
\[
\begin{split}
P_{\cls}M_z\Pi_T = M_z\Pi_Th-\Pi_T \Pi_T^* M_z\Pi_T = M_z\Pi_T-\Pi_TT.
\end{split}
\]
Now, for $h \in \clh$, we compute
\[
\begin{split}
\|M_{\Theta}^*M_z\Pi_Th\|^2&=\la M_{\Theta}^*M_z\Pi_Th, M_{\Theta}^*M_z\Pi_Th \ra
\\
&=\la \Pi_T^* M_z^*M_{\Theta}M_{\Theta}^*M_z\Pi_Th,\; h  \ra
\\
&=\la h,h \ra -\la \Pi_T^* M_z^*P_{\clq}M_z\Pi_Th,\; h \ra
\\
&=\la h,h \ra -\la \Pi_T^* M_z^*\Pi_T\Pi_T^*M_z\Pi_Th,\; h \ra
\\
&=\la h,h \ra -\la T^*Th,h\ra
\\
&=\|D_T h\|^2.
\end{split}
\]
We define $U:  \text{clos}(M_{\Theta}^*M_z\Pi_T)\raro \cld_T$ by
\[
U(M_{\Theta}^*M_z\Pi_Th)=D_Th,
\]
for all $h\in \clh$. So, $U$ is a unitary operator. Hence, $U^*D_T h=M_{\Theta}^*M_z\Pi_Th$. We define $\Theta_T(\cdot):=\Theta(\cdot) U^*$ (recall \eqref{eqn: theta U}). Then $\Theta_T:\D\raro\clb(\cld_T,\cld_{T^*})$ is an inner function and
\[
\begin{split}
M_{\Theta_T} D_Th = M_{\Theta} U^*D_Th = M_{\Theta}M_{\Theta}^*M_z\Pi_Th = P_{\cls}M_z\Pi_Th = (M_z\Pi_T-\Pi_TT)(h).
\end{split}
 \]
Thus for $w\in \D$, we have
\[
\Theta_{T}(w)D_Th=[(M_z\Pi_T-\Pi_TT)(h)](w).
\]
We are now precisely at the stage of Theorem \ref{main theorem} and proceed as in the setting of Theorem \ref{char fn}. In other words, to find the expression of $\Theta_T$ we compute:
\[
\begin{split}
\Theta_{T}(w)D_Th
&=
[(M_z\Pi_T-\Pi_TT)(h)](w)
\\
&=w (\Pi_Th)(w)-(\Pi_TTh)(w)
\\
&=wD_{T^*}(I-wT^*)^{-1}h-D_{T^*}(I-wT^*)^{-1}Th
\\
&=wD_{T^*}(I-wT^*)^{-1}h-D_{T^*}Th-wD_{T^*}(I-wT^*)^{-1}T^*Th
\\
&=-D_{T^*}Th+wD_{T^*}(I-wT^*)^{-1}(I-T^*T)h
\\
&=-TD_Th+wD_{T^*}(I-wT^*)^{-1}D_T^2h
\\
&=[-T+wD_{T^*}(I-wT^*)^{-1}D_T]D_Th.
\end{split}
\]
As $D_T \clh$ is dense in $\cld_T$, it finally follows that
\[
\Theta_{T}(w) = [-T+wD_{T^*}(I-wT^*)^{-1}D_T]|_{\cld_T} \qquad (w \in \D).
\]
Hence $\Theta_T$ matches with the Sz-Nagy and Foias characteristic function of $T$ (see \eqref{eqn: ch fn 1 var}).

\subsection{Representing inner functions}\label{sub sec: inner funct} In the introduction, we highlighted the potential of the characteristic function formula to yield representations of inner functions on $\D^n$, $n \geq 1$. Here, we present the theory and provide a detailed description of inner functions. We fix a nonconstant inner function $\Theta \in H^\infty_{\clb(\cle,\cle_*)}(\D^n)$, and set
\[
\clr = \bigvee_{k \in \Z_+^n} z^k \clq_{\Theta}.
\]
Clearly, by construction, $\clr$ is a submodule of $H^2_{\cle_*}(\D^n)$. On the other hand, since $M_{z_i}$ is an isometry, it follows easily that $M_{z_i}^* \clr \subseteq \clr$ for all $i \in I_n$. In other words, $\clr$ is both a submodule and a quotient module, or equivalently, $\clr$ reduces the tuple $(M_{z_1}, \ldots, M_{z_n})$. There exist closed subspaces $\cle_*^{\prime}$ and $\cle_*^{\prime\prime}$ such that
\[
\cle_*=\cle_*^{\prime}\oplus \cle_*^{\prime\prime},
\]
and $\clr = H^2_{\cle_*^{\prime}}(\D^n)$. It follows that
\[
H^2_{\cle_*}(\D^n) = \clq_{\Theta}\oplus \Theta H^2_{\cle}(\D^n) = H^2_{\cle_*^{\prime}}(\D^n)\oplus H^2_{\cle_*^{\prime\prime}}(\D^n).
\]
Since $\clq_{\Theta}\subseteq H^2_{\cle_*^{\prime}}(\D^n)$ and $\Theta H^2_{\cle}(\D^n) = H^2_{\cle}(\D^n) \ominus \clq_{\Theta}$, we get
\[
\Theta H^2_{\cle}(\D^n)=(H^2_{\cle_*^{\prime}}(\D^n)\ominus\clq_{\Theta})\oplus H^2_{\cle_*^{\prime\prime}}(\D^n).
\]
This shows that $M_{\Theta}$ maps $H^2_{\cle}(\D^n)$ onto $(H^2_{\cle_*^{\prime}}(\D^n)\ominus\clq_{\Theta})\oplus H^2_{\cle_*^{\prime\prime}}(\D^n)$.
Define closed subspaces of $H^2_{\cle}(\D^n)$ by
\[
\clm:=\{f\in H^2_{\cle}(\D^n): \Theta f\in H^2_{\cle_*^{\prime\prime}}(\D^n) \},
\]
and
\[
\cln = \{f\in H^2_{\cle}(\D^n): \Theta f\in  H^2_{\cle_*^{\prime}}(\D^n)\ominus\clq_{\Theta}\},
\]
so that $\clm\oplus \cln=H^2_{\cle}(\D^n)$. By the construction of $\clm$, if $f\in \clm$, then it readily follows that $z_jf\in \clm$ for all $j \in I_n$, and hence $\clm$ is a submodule of $H^2_{\cle}(\D^n)$. We also observe that $\clm=M_{\Theta}^*H^2_{\cle_*^{\prime\prime}}(\D^n)$, that is, $\clm$ is a quotient module. This implies that $\clm$ reduces the tuple $(M_{z_1}, \ldots, M_{z_n})$ on $H^2_{\cle}(\D^n)$. So there are closed subspaces $\cle^{\prime}$ and $\cle^{\prime\prime}$ such that
\[
\cle=\cle^{\prime}\oplus \cle^{\prime\prime}.
\]
and
\[
\clm=H^2_{\cle^{\prime\prime}}(\D^n).
\]
Then $\Theta H^2_{\cle^{\prime}}(\D^n) = H^2_{\cle_*^{\prime}}(\D^n)\ominus\clq_{\Theta}$ and $\Theta H^2_{\cle^{\prime\prime}}(\D^n)=H^2_{\cle_*^{\prime\prime}}(\D^n)$, and consequently, there exist a unitary $W:\cle^{\prime\prime}\raro \cle_*^{\prime\prime}$ and inner function $\Theta^{\prime} \in H^\infty_{\clb(\cle^{\prime}, \cle^{\prime}_*)}(\D^n)$ such that
\[
M_{\Theta}=\begin{bmatrix}
M_{\Theta^{\prime}}&0\\0&M_W
\end{bmatrix}:H^2_{\cle^{\prime}}(\D^n)\oplus H^2_{\cle^{\prime\prime}}(\D^n)\raro
H^2_{\cle_*^{\prime}}(\D^n)\oplus H^2_{\cle_*^{\prime\prime}}(\D^n).
\]
Clearly, this multiplication operator corresponds to the decomposition of the inner function
\[
\Theta(z)=\begin{bmatrix}
\Theta^{\prime}(z)&0\\0&W
\end{bmatrix}:\cle^{\prime}\oplus\cle^{\prime\prime} \raro \cle_*^{\prime}\oplus\cle_*^{\prime\prime},
\]
for all $z\in \D^n$. Define $T \in \mathbb{S}^B_n(\clq_\Theta)$ by
\[
T:=(P_{\clq_{\Theta}}M_{z_1}|_{\clq_{\Theta}},\dots, P_{\clq_{\Theta}}M_{z_n}|_{\clq_{\Theta}}).
\]
On one hand, by the construction, $(M_{z_1}, \ldots, M_{z_n})$ on $H^2_{\cle_*^{\prime}}(\D^n)$ is the minimal dilation of $T$, where, on the other hand, the canonical dilation $(M_{z_1}, \ldots, M_{z_n})$ on $H^2_{\cld_{T^*}}(\D^n)$ of $T$ is also a minimal one (see Theorem \ref{Dilation Thm}). This forces
\[
\dim \cle_*^{\prime}= \dim \cld_{T^*}.
\]
The uniqueness of the dilation in this setting leads us to a unitary $V:\cle_*^{\prime}\raro \cld_{T^*}$ which yields a (constant) unitary multiplier $M_{V} \in \clb(H^2_{\cle_*^{\prime}}(\D^n), H^2_{\cld_{T^*}}(\D^n))$. We have
\[
M_{V}\clq_{\Theta}=\clq_{\Theta_T},
\]
and
\[
M_{V} (H^2_{\cle_*^{\prime}}(\D^n)\ominus \clq_{\Theta}) = \Theta_T H^2_{\bm{\cld}_{T}}(\D^n),
\]
where $\Theta_T:\D^n \raro\clb(\bm{\cld}_T,\cld_{T^*})$ is the characteristic function of $T$ (see Definition \ref{def: char funct}). Since $\Theta^{\prime} H^2_{\cle^{\prime}}(\D^n) = H^2_{\cle_*^{\prime}}(\D^n)\ominus\clq_{\Theta}$, we get
\[
V\Theta^{\prime} H^2_{\cle^{\prime}}(\D^n) = \Theta_T H^2_{\bm{\cld}_{T}}(\D^n).
\]
This again leads us to a unitary $V^{\prime}: \cle^{\prime} \raro \bm{\cld}_T$ such that
\[
V\Theta^{\prime}(z) = \Theta_T(z)V^{\prime},
\]
for all $z\in \D^n$. Define unitary operators $\sigma$ and $\sigma_{*}$ by
\[
\sigma_*:=\begin{bmatrix}
V&0\\0&I_{\cle_*^{\prime\prime}}
\end{bmatrix}: \cle_*^{\prime}\oplus\cle_*^{\prime\prime}\raro \cld_{T^*}\oplus \cle_*^{\prime\prime},
\]
and
\[
\sigma:=\begin{bmatrix}
V^{\prime}&0\\0&W
\end{bmatrix}: \cle^{\prime}\oplus\cle^{\prime\prime}\raro \bm{\cld}_{T}\oplus \cle_*^{\prime\prime}.
\]
Then
\[
\sigma_*^{*}\begin{bmatrix}
\Theta_T(z)&0\\0&I_{\cle_*^{\prime\prime}}
\end{bmatrix}\sigma=
\begin{bmatrix}
V^*\Theta_T(z)V^{\prime}&0\\0&W
\end{bmatrix}
=\begin{bmatrix}
\Theta^{\prime}(z)&0\\0&W
\end{bmatrix}
=\Theta(z),
\]
for all $z\in \D^n$ implies that $\Theta$ coincides with $\Theta_T \oplus  I_{\cle_*^{\prime\prime}}$. In summary, up to (the unitary) coincidence of analytic functions (see Definition \ref{def: coinc}), we have the representation: 
\begin{equation}\label{eqn: inner fn}
\Theta(z) = \begin{bmatrix}
\Theta_T(z)&0\\0&I_{\cle_*^{\prime\prime}}
\end{bmatrix},
\end{equation}
for all $z \in \D^n$, for some $T \in \mathbb{S}_n^B(\clh)$ on some Hilbert space $\clh$, where $\Theta_T$ is the characteristic function of $T$ and is given by (see Definition \ref{def: char funct})
\[
\Theta_{T}(z)\bm{D}_T \tilde{h} = D_{T^*}\prod_{k=1}^{n}(I_{\clh}-z_k T_k^*)^{-1}\sum_{j=1}^{n}(z_jI_{\clh}-T_j)\prod_{i \in \{j\}^c} (I_{\clh}-z_iT_i^*)h_j,
\]
for all $z \in \D^n$ and $\tilde{h} \in \clh^n$. Therefore, \eqref{eqn: inner fn} is the desired representation of inner functions $\Theta$ on $\D^n$.

\subsection{Commutator defect operators}\label{sub: new comm}
In the context of the joint defect operators introduced in Definition \ref{defect}, this subsection introduces an additional operator that may provide further insight into defining the notion of joint commutators for commuting tuples of operators (however, see \cite{Athavale, Curto}). However, the results presented in this section do not have any significant implications for the overall paper. In this case, we define another operator that would eventually dominate the joint defect operator:

\begin{defn}\label{def: commutator defect}
For each $T \in \clb^n_c(\clh)$, define the commutator defect operator $\bm{D}_{c, T}^2$ by
\[
\bm{D}_{c, T}^2=\begin{bmatrix}
D_{T_1}^2 &[T_2, T_1^*] &\cdots &[T_n, T_1^*]\\
[T_1, T_2^*] & D_{T_2}^2&\cdots &[T_n, T_2^*]\\
\vdots&\vdots&\ddots&\vdots\\
[T_1, T_n^*] &[T_2, T_n^*] &\cdots&D_{T_n}^2
\end{bmatrix}.
\]
\end{defn}

And here is the dominance property:

\begin{thm}
$\bm{D}_{c, T}^2\geq 0$ for all $T\in \mathbb{S}_n(\clh)$.  Moreover, if $T\in\mathbb{S}^B_n(\clh)$, then $\bm{D}_{T}^2 \leq \bm{D}_{c, T}^2$.
\end{thm}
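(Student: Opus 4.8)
The plan is to handle the two assertions separately, using the now-familiar reduction to model operators on a quotient module of a vector-valued Hardy space. For the first assertion, given $T \in \mathbb{S}_n(\clh)$, by Theorem \ref{Dilation Thm} and \eqref{eqn: T and CT} we may replace $T$ by the tuple of model operators $C = (C_1, \ldots, C_n)$ on a quotient module $\clq \subseteq H^2_{\cle}(\D^n)$, with $\cls = \clq^\perp$. From Lemma \ref{lemma: defect Ci} we have $D_{C_j}^2 = P_{\clq}M_{z_j}^*P_{\cls}M_{z_j}|_{\clq}$, and from the computation of $[C_j, C_i^*]$ in Proposition \ref{corss_commutator} (which holds for $n \geq 2$ without the Beurling hypothesis) we have $[C_j, C_i^*] = P_\clq M_{z_i}^* P_\cls M_{z_j}|_\clq$ for $i \neq j$. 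Therefore, setting $Y_j = P_\cls M_{z_j}|_\clq \in \clb(\clq, \cls)$, we obtain
\[
\bm{D}_{c, C}^2 = \begin{pmatrix} Y_i^* Y_j \end{pmatrix}_{n \times n},
\]
since the $(i,j)$ entry is $P_\clq M_{z_i}^* P_\cls P_\cls M_{z_j}|_\clq = Y_i^* Y_j$ (using $P_\cls^2 = P_\cls$), and the diagonal entry is $P_\clq M_{z_j}^* P_\cls M_{z_j}|_\clq = D_{C_j}^2$. By \eqref{norm_equality} this is a positive operator: $\la \bm{D}_{c,C}^2 \tilde f, \tilde f\ra = \|\sum_j Y_j f_j\|^2 \geq 0$ for all $\tilde f \in \clq^n$. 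This proves $\bm{D}_{c,T}^2 \geq 0$.

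For the second assertion, assume in addition that $T \in \mathbb{S}_n^B(\clh)$, so that $\clq$ is a Beurling quotient module (Theorem \ref{Beurling_qm}) and the wandering subspace $\clw = \cap_{i}(\cls \ominus z_i \cls)$ is available. From \eqref{eqn: DjCP}--\eqref{crosscommmmmmmmm} (as used in the proof of Proposition \ref{positive_defect}) we have $D_{i,C}^2 = P_\clq M_{z_i}^* P_\clw M_{z_i}|_\clq$ and $\delta_{ij}(C) = P_\clq M_{z_i}^* P_\clw M_{z_j}|_\clq$, so with $X_j = P_\clw M_{z_j}|_\clq$ we get $\bm{D}_C^2 = \begin{pmatrix} X_i^* X_j \end{pmatrix}_{n \times n}$. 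The point now is that $X_j = P_\clw Y_j$ (viewing $P_\clw$ as the projection of $\cls$ onto the subspace $\clw \subseteq \cls$), since $\clw \subseteq \cls$ and $Y_j$ maps into $\cls$. Hence, for any $\tilde f \in \clq^n$,
\[
\la \bm{D}_C^2 \tilde f, \tilde f\ra = \Big\| \sum_{j=1}^n P_\clw Y_j f_j \Big\|^2 = \Big\| P_\clw \Big(\sum_{j=1}^n Y_j f_j\Big) \Big\|^2 \leq \Big\| \sum_{j=1}^n Y_j f_j \Big\|^2 = \la \bm{D}_{c,C}^2 \tilde f, \tilde f\ra,
\]
because orthogonal projections are contractive. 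This gives $\bm{D}_T^2 \leq \bm{D}_{c,T}^2$ and completes the proof.

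I do not anticipate a serious obstacle here: both matrices are Gram-type operators built from the same family $\{M_{z_j}|_\clq\}$ compressed against two nested subspaces, $\clw \subseteq \cls$, of $\clq^\perp$, and the inequality is exactly the statement that cutting down by the smaller projection $P_\clw$ shrinks the quadratic form. The only point requiring a line of care is the verification that the diagonal and off-diagonal entries of $\bm{D}_{c,C}^2$ really do assemble into $\begin{pmatrix} Y_i^* Y_j\end{pmatrix}$ — i.e. that $D_{C_j}^2 = Y_j^* Y_j$ and $[C_j, C_i^*] = Y_i^* Y_j$ — but both are immediate from Lemma \ref{lemma: defect Ci} and Proposition \ref{corss_commutator}, together with the idempotence of $P_\cls$. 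One should also note explicitly that for the $n = 2$ case the convention $\delta_{12}(T) = [T_2, T_1^*]$ is consistent with the general formula in the model-operator picture, so that the comparison $\bm{D}_T^2 \leq \bm{D}_{c,T}^2$ remains valid (and is in fact close to an equality there, the diagonal terms $D_{j,T}^2$ and $D_{T_j}^2$ differing by the truncation in Definition \ref{defn: trunc defect}).
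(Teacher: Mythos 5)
Your proof is correct and follows essentially the same approach as the paper: both parts reduce to the model tuple $C$ on a quotient module, write $\bm{D}_{c,C}^2$ and $\bm{D}_C^2$ as the Gram matrices $\bigl(Y_i^*Y_j\bigr)$ and $\bigl(X_i^*X_j\bigr)$ with $Y_j = P_{\cls}M_{z_j}|_{\clq}$ and $X_j = P_{\clw}M_{z_j}|_{\clq}$, and compare the resulting quadratic forms. The only (harmless) difference is in the last step, where you deduce $\|P_{\clw}\sum_j Y_jf_j\|\le\|\sum_j Y_jf_j\|$ directly from $\clw\subseteq\cls$ and contractivity of $P_{\clw}$, whereas the paper reaches the same inequality via the factorization $P_{\clw}=M_{\Theta}P_{\cle}M_{\Theta}^*$; your version is slightly more economical.
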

\begin{proof}
We consider the $n$-tuple $C$ on some quotient module $\clq \subseteq H^2_{\cle_*}(\D^n)$ instead of $T$ (in view of Theorem \ref{Dilation Thm}). By \eqref{lemma: defect Ci}, we know $D_{C_i}^2=P_{\clq}M_{z_i}^*P_{\cls}M_{z_i}|_{\clq}$ for all $i \in I_n$. Also, by Proposition \ref{corss_commutator}, we have $[C_j,C_i^*]=P_{\clq}M_{z_i}^*P_{\cls}M_{z_j}|_{\clq}$ for all $i \neq j$. This shows that
\[
\bm{D}_{c, C}^2=\begin{pmatrix} Y_i^*Y_j\end{pmatrix}_{n\times n},
\]
where $Y_j=P_{\cls}M_{z_j}|_{\clq}$ and $j \in I_n$. By \eqref{norm_equality}, we have
\begin{equation}\label{norm_bound_2}
\la \bm{D}_{c, C}^2 \tilde{f},  \tilde{f}\ra=\|P_{\cls}M_{z_1} f_1+\dots+P_{\cls}M_{z_n} f_n\|^2 \geq 0,
\end{equation}
for all $\tilde{f} \in \clq^n$, equivalently, $\bm{D}_{c, C}^2 \geq 0$, which proves the first part. Now, we assume that $\clq$ is a Beurling quotient module, that is, $C\in\mathbb{S}^B_n(\clq)$. Then there exist a Hilbert space $\cle$ and an inner function $\Theta \in H^\infty_{\clb(\cle, \cle_*)}(\D^n)$ such that $\clq = \clq_\Theta$. By part (1) of Lemma \ref{wandering}, we have $\clw = \Theta \cle$ and hence
\[
P_\clw = P_{\Theta \cle} = M_\Theta P_\cle M_\Theta^*,
\]
where $P_{\cle}f=f(0)$ for all $f\in H^2_{\cle}(\D^n)$. Then, for each $\tilde{f} \in \clq_\Theta^n$, we know, by \eqref{positive_defect}, that
\[
\la \bm{D}_C^2 \tilde{f}, \tilde{f} \ra = \|P_{\clw}M_{z_1}f_1 + \cdots + P_{\clw}M_{z_n} f_n\|^2 = \|M_{\Theta}P_{\cle} M_{\Theta}^* (M_{z_1} f_1 + \cdots + M_{z_n} f_n)\|^2.
\]
This and the fact that $M_\Theta$ is an isometry imply
\[
\begin{split}
\la \bm{D}_C^2 \tilde{f}, \tilde{f} \ra & \leq \|M_{\Theta}P_{\cle}\|^2 \| M_{\Theta}^*M_{z_1} f_1 + \cdots + M_{\Theta}^*M_{z_n} f_n\|^2
\\
&\leq  \| M_{\Theta}M_{\Theta}^*M_{z_1} f_1 + \cdots + M_{\Theta}M_{\Theta}^*M_{z_n} f_n\|^2
\\
& = \|P_{\cls}M_{z_1} f_1 + \cdots + P_{\cls}M_{z_n} f_n\|^2
\\
&=\la \bm{D}_{c, C}^2 \tilde{f}, \tilde{f} \ra,
\end{split}
\]
proving that $\bm{D}_C^2\leq \bm{D}_{c, C}^2$.
\end{proof}

Following the argument as applied in the first part of the proof of the above theorem, we have $[C_j,C_i^*]=P_{\clq}M_{z_i}^*P_{\cls}M_{z_j}|_{\clq}$ (see Proposition \ref{corss_commutator}) and $D_{C_i}^2=P_{\clq}M_{z_i}^*P_{\cls}M_{z_i}|_{\clq}$ (also see Lemma \ref{lemma: defect Ci}) for all $i \neq j$. Viewing $[C_j,C_i^*] = (P_{\clq}M_{z_i}^*P_{\cls}) (M_{z_j}|_{\clq})$, Douglas' range inclusion theorem implies
\[
\text{ran} [C_j,C_i^*] \subseteq \text{ran} (P_{\clq}M_{z_i}^*P_{\cls}) = \text{ran} (P_{\clq}M_{z_i}^*P_{\cls} M_{z_i} P_{\clq}) = \text{ran} D_{C_i}^2 \subseteq \text{ran} D_{C_i}.
\]
In particular, for $T \in \mathbb{S}_n^{B}(\clh)$, we have that $\overline{ran}[T_j,T_i^*]\subseteq \cld_{T_i}$ for all $i\neq j$. As $\cld_{T_i} \perp \cld_{T_j}$ for sll $i \neq j$, it follows that:
 	
\begin{cor}
Let $T \in \mathbb{S}_n^{B}(\clh)$.  Then
\[
\overline{ran}\left(\bm{D}_{c, T}^2\right) \subseteq\bigoplus_{j=1}^n\cld_{T_j}.
\]
\end{cor}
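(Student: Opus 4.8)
The plan is to argue componentwise on $\clh^n$, exploiting that the $i$-th row of the matrix $\bm{D}_{c, T}^2$ consists only of operators whose ranges land inside $\cld_{T_i}$. Concretely, for a general element $\tilde{h} = \begin{bmatrix} h_1 & \cdots & h_n\end{bmatrix}^t \in \clh^n$, the $i$-th component of $\bm{D}_{c, T}^2 \tilde{h}$ is $D_{T_i}^2 h_i + \sum_{j \neq i} [T_j, T_i^*] h_j$. The first summand obviously lies in $\overline{ran}\,D_{T_i}^2 = \cld_{T_i}$, and the discussion immediately preceding the corollary already records (via the Douglas range inclusion theorem applied to the factorization $[T_j, T_i^*] = (P_{\clq}M_{z_i}^*P_{\cls})(M_{z_j}|_{\clq})$, after passing to the model $C$) that $\overline{ran}\,[T_j, T_i^*] \subseteq \cld_{T_i}$ for every $j \neq i$. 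Hence every term of the $i$-th component lies in $\cld_{T_i}$, and so does the whole $i$-th component.

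From this I would conclude $\bm{D}_{c, T}^2 \tilde{h} \in \cld_{T_1} \oplus \cdots \oplus \cld_{T_n}$ for every $\tilde{h} \in \clh^n$, i.e. $\text{ran}\,\bm{D}_{c, T}^2 \subseteq \bigoplus_{j=1}^n \cld_{T_j}$. To upgrade this to a statement about the closure of the range, I would invoke that $\bigoplus_{j=1}^n \cld_{T_j}$ is a closed subspace of $\clh^n$: since $T \in \mathbb{S}_n^{B}(\clh)$, Theorem \ref{Beurling_qm} gives $(I_{\clh} - T_i^*T_i)(I_{\clh} - T_j^*T_j) = 0$ for $i \neq j$, hence $\cld_{T_i} \perp \cld_{T_j}$, so the sum is a finite \emph{orthogonal} direct sum and therefore closed. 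Taking closures in $\text{ran}\,\bm{D}_{c, T}^2 \subseteq \bigoplus_{j=1}^n \cld_{T_j}$ then yields the asserted inclusion $\overline{ran}\,\bm{D}_{c, T}^2 \subseteq \bigoplus_{j=1}^n \cld_{T_j}$.

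The argument is essentially immediate once the preparatory observation $\overline{ran}\,[T_j, T_i^*] \subseteq \cld_{T_i}$ is in hand, so I do not expect a genuine obstacle; the only point deserving a moment's care is that the Beurling hypothesis is used not merely to make sense of the defect operator of the second kind, but to guarantee that the direct sum $\bigoplus_j \cld_{T_j}$ is honestly closed (for an arbitrary Szeg\"{o} tuple the spaces $\cld_{T_i}$ need not be pairwise orthogonal, and a finite sum of closed subspaces can fail to be closed). One can sidestep even this minor subtlety by observing directly that $P_{(\bigoplus_j \cld_{T_j})^\perp}\bm{D}_{c, T}^2 = 0$, which delivers the closure containment without any discussion of closedness of the sum.
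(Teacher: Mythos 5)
Your proof is correct and follows essentially the same route as the paper: the paper likewise derives $\overline{ran}\,[T_j,T_i^*]\subseteq \cld_{T_i}$ from the factorization $[C_j,C_i^*]=(P_{\clq}M_{z_i}^*P_{\cls})(M_{z_j}|_{\clq})$ via Douglas' range inclusion theorem and then invokes $\cld_{T_i}\perp\cld_{T_j}$ for $i\neq j$ to conclude. The only cosmetic remark is that if $\bigoplus_{j=1}^n\cld_{T_j}$ is read as the componentwise subspace of $\clh^n$, its closedness is automatic (a product of closed subspaces), and the orthogonality from the Beurling hypothesis is really needed only for the subsequent identification of this sum with a closed subspace of $\clh$, as the paper notes after the corollary.
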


Applying the same argument as in \eqref{eqn: D D H}, we infer for each $T\in\mathbb{S}_n^B(\clh)$ that
\[
\bigoplus_{j=1}^n \cld_{j,T} \subseteq \clh,
\]
and therefore, as in \eqref{eqn: D D H}, here also we can treat $\overline{ran}\left(\bm{D}_{c, T}^2\right)$ as a closed subspace of $\clh$.

We conclude this subsection by noting that the concept of tuples of hyponormal operators remains ambiguous (however, see \cite{Athavale, Curto}). However, the operator $\bm{D}_{c, T}^2$ for $T \in \mathbb{S}_n^{B}(\clh)$ may serve as a viable candidate for defining the hyponormality of such tuples. This, of course, requires further justification.

\subsection{Revisiting Ahern and Clark}\label{subsect: Ahern and Clark} Now we turn to Beurling quotient modules. We reprove a classical and surprising result found by Ahern and Clark \cite{Analytic Continuation} concerning the structure of submodules within the scalar-valued Hardy space over $\D^n$. We prove the result within the framework of vector-valued Hardy spaces. We present two independent proofs.

 
 
\begin{thm}
Let $\Theta\in H^{\infty}_{\clb(\cle,\cle_*)}(\D^n)$ be a non-constant inner function on $\D^n$, $n\geq 2$. Then
\[
\dim \clq_{\Theta}=\infty.
\]  	
\end{thm}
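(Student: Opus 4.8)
The plan is to prove the statement by contraposition: I will assume $\dim\clq_{\Theta}<\infty$ and show that this forces $\clq_{\Theta}=\{0\}$, hence $\Theta$ is a unitary constant, contradicting the hypothesis that $\Theta$ is non-constant. The mechanism behind this is that, for $n\geq 2$, the Beurling rigidity established in Theorem~\ref{pure_isometry} cannot coexist with finite-dimensionality of a nonzero Beurling quotient module, because the model operators are pure (they lie in $C_{\cdot 0}$).

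Concretely, let $C=(C_1,\dots,C_n)$ be the tuple of model operators on $\clq_{\Theta}$, so that $C\in\mathbb{S}^{B}_n(\clq_{\Theta})$; in particular $C_j\in C_{\cdot 0}$ for every $j$, and by part~(3) of Theorem~\ref{pure_isometry} the operator $C_i|_{\cld_{C_j}}\colon\cld_{C_j}\raro\cld_{C_j}$ is an isometry whenever $i\neq j$. Fix $j\in I_n$ and pick $i\neq j$, which is possible since $n\geq 2$. Because $\cld_{C_j}\subseteq\clq_{\Theta}$ is finite-dimensional, the isometry $C_i|_{\cld_{C_j}}$ is in fact a unitary on $\cld_{C_j}$, and by part~(1) of Theorem~\ref{More dilation} the subspace $\cld_{C_j}$ is invariant under $C_i$. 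Here I would insert the elementary observation that a contraction $X\in C_{\cdot 0}$ admits no nonzero invariant subspace $\clm$ on which it acts as a unitary: for $0\neq y\in\clm$ and $m\geq 1$ one writes $y=X^{m}z_m$ with $z_m=(X|_{\clm})^{-m}y\in\clm$ and $\|z_m\|=\|y\|$, whence $\|y\|^{2}=\langle X^{m}z_m,y\rangle\leq\|z_m\|\,\|X^{*m}y\|=\|y\|\,\|X^{*m}y\|$, so $\|X^{*m}y\|\geq\|y\|>0$ for all $m$, contradicting $X^{*m}\raro 0$ strongly. Taking $X=C_i$ and $\clm=\cld_{C_j}$ gives $\cld_{C_j}=\{0\}$, i.e.\ $D_{C_j}=0$; as $j$ was arbitrary this holds for all $j\in I_n$.

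Therefore each $C_j$ is an isometry on the finite-dimensional space $\clq_{\Theta}$, hence unitary, and the same elementary observation applied with $X=C_j$, $\clm=\clq_{\Theta}$ forces $\clq_{\Theta}=\{0\}$. Thus $\Theta H^{2}_{\cle}(\D^{n})=H^{2}_{\cle_*}(\D^{n})$; intersecting both sides with $\bigcap_{i\in I_n}\ker M_{z_i}^{*}$ and invoking part~(1) of Lemma~\ref{wandering} (the wandering subspace of $\Theta H^{2}_{\cle}(\D^{n})$ equals $\Theta\cle$), we get $\Theta\cle=\cle_*$, i.e.\ $z\mapsto\Theta(z)\eta$ is constant for every $\eta\in\cle$, so $\Theta$ is constant --- the required contradiction. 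The one point that needs care is the role of finite-dimensionality: the ``no unitary restriction'' observation genuinely fails for unilateral shifts, so one really must use $\dim\cld_{C_j}<\infty$ (which is automatic once $\dim\clq_{\Theta}<\infty$) to upgrade the isometry $C_i|_{\cld_{C_j}}$ to a unitary; everything else is bookkeeping with results already available in the paper. A second, more function-theoretic, proof can also be given: $\dim\clq_{\Theta}<\infty$ makes the reproducing kernel $\mathbb{S}_n(z,w)(I_{\cle_*}-\Theta(z)\Theta(w)^{*})$ of $\clq_{\Theta}$ of finite rank, which on the diagonal and letting $z$ tend radially to $\mathbb{T}^{n}$ shows $\Theta$ is unitary-valued a.e.\ on $\mathbb{T}^{n}$, after which a one-variable slice reduction finishes the argument; I would carry out the operator-theoretic proof in full and only sketch this alternative.
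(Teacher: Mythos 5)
Your proof is correct. It is essentially the contrapositive of the paper's Proof~I, and it rests on the same key rigidity result (Theorem~\ref{pure_isometry}): the defect space $\cld_{C_j}$ carries an isometric action of another coordinate, which is incompatible with finite dimensionality. The paper phrases this as ``$M_{z_i}|_{\cld_{C_j}}$ is a nonzero pure isometry, hence $\dim\cld_{C_j}=\infty$,'' using the invariance $z_i\cld_{C_j}\subseteq\cld_{C_j}$ from part~(4); you instead use the compression $C_i|_{\cld_{C_j}}$ from part~(3) together with the invariance from Theorem~\ref{More dilation}(1), and then supply explicitly the argument that a $C_{\cdot 0}$ contraction admits no nonzero invariant subspace on which it acts unitarily --- which is precisely the proof of the fact the paper quotes implicitly (a nonzero pure isometry cannot live on a finite-dimensional space). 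The only genuine divergence is in how the non-constancy hypothesis enters: the paper uses it up front, via Proposition~\ref{Structure}, to see that $\cld_{C_j}=\overline{M_{z_j}^*\Theta\cle}\neq\{0\}$ for some $j$; you push the contrapositive all the way to $\clq_\Theta=\{0\}$ (each $C_j$ becomes an isometry, hence a unitary, on a finite-dimensional space, contradicting $C_j\in C_{\cdot 0}$) and then deduce constancy of $\Theta$ from the wandering-subspace identity of Lemma~\ref{wandering}(1). Both routes are sound; yours avoids Proposition~\ref{Structure} at the cost of the extra final step. Note also that the paper offers a second, more elementary proof via the linear independence of $\{z_i^k M_{z_j}^*\Theta\eta\}_{k\geq 0}$ inside $\cld_{C_j}$, which is different from both your main argument and your sketched reproducing-kernel alternative.
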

\begin{proof}[Proof I]
As $\Theta$ is non-constant, there exists $j \in I_n$ such that $M_{z_j}^*\Theta \cle\neq \{0\}$. By Proposition \ref{Structure},  $\cld_{C_j}\neq \{0\}$. Since $n\geq 2$, there exists $i \in I_n$ such that $i\neq j$ and $M_{z_i}|_{\cld_{C_j}}$ is a non-zero pure isometry (see part (3) of Theorem \ref{pure_isometry}). This shows, in particular, that $\dim \cld_{C_j}=\infty$. As $\cld_{C_j}\subseteq \clq_{\Theta}$, the result follows.
\end{proof}
\begin{proof}[Proof II]		
As above, there exists $j \in I_n$ such that $M_{z_j}^*\Theta \cle\neq \{0\}$. By Proposition \ref{Structure}, $\dim \cld_{j,C}\geq 1$. There exists $\eta \in \cle$ such that
\[
h:=M_{z_j}^*\Theta\eta\neq 0.
\]
Pick $i\in I_n$ such that $i \neq j$. Consider the set
\[
F:= \{z_i^kh:k\geq 0  \}.
\]
Let $\{k_1,\dots,k_l\} \subseteq \mathbb{Z}_+$ and $\{c_1,\dots, c_l\}\subseteq \mathbb{C}$ and suppose
\[
c_1z_i^{k_1}h+\dots +c_lz_i^{k_l}h=0.
\]
Then $c_1z_i^{k_1}+\dots +c_lz_i^{k_l}=0$. This implies $c_k=0$ for all $k=1,\dots,l$. Thus, $F$ is an infinite linearly independent subset of $\cld_{C_j}$. So, $\dim \cld_{C_j}=\infty$. As $\cld_{C_j}\subseteq \clq_{\Theta}$, the result follows.
\end{proof}

The above proofs are new even when compared to the scalar case of Ahern and Clark. Moreover, the proofs naturally follow from the techniques introduced in this paper, which are used to derive results of different kinds.

\subsection{Pairs of contractions}\label{subsect: n=2}

We conclude this paper with the representations of characteristic functions for pairs of commuting contractions. This is relevant as for this case, the notions of truncated defect operators and joint commutators are simpler and commonly known. For instance, if $n=2$, then the joint commutators are simply the conventional commutators. In order to be more specific, for each contraction $X \in \clb(\clh)$, we define the \textit{operator Blaschke factor} $b_X: \D \raro \clb(\clh)$ as:
\[
b_X(z) = (1-z X^*)^{-1}(z - X),
\]
for all $z \in \D$.

We pick a pair of commuting contractions $T = (T_1, T_2)$ acting on $\clh$. By the definition of truncated defect operators, we have
\[
\delta_{12}(T) = [T_2, T_1^*].
\]
Moreover, the truncated defect operators are given by
\[
D_{1, T}^2 = D^2_{T_1} - T_2 D^2_{T_1} T_2^*,
\]
and
\[
D_{2, T}^2 = D^2_{T_2} - T_1 D^2_{T_2} T_1^*.
\]
Then the joint defect operator (that is, the defect operator of the second kind) is given by
\[
\bm{D}_T^2:=\begin{bmatrix}
D^2_{T_1} - T_2 D^2_{T_1} T_2^* &  [T_2, T_1^*] \\
[T_1,T_2^*]  & D^2_{T_2} - T_1 D^2_{T_2} T_1^*
\end{bmatrix} \in \clb(\clh^2).
\]
Next, for the pair of commuting contractions $T = (T_1, T_2)$, we define the \textit{joint operator Blaschke factor} $b_{(T_1, T_2)}: \D^2 \raro \clb(\clh)$ as:
\[
b_{(T_1, T_2)}(z) = (I_{\clh} - z_1 T_1^*)^{-1} b_{T_2}(z_2) (I_{\clh} - z_1 T_1^*),
\]
for all $z = (z_1, z_2) \in \D^2$. Finally, assume that $T$ is a Beurling tuple, that is, $T \in \mathbb{S}_2^B(\clh)$. Then the characteristic function of $T$ is given by
\[
\Theta_{T}((z_1, z_2))\bm{D}_T \tilde{h} = D_{T^*}\Big(b_{(T_1, T_2)}(z_1, z_2) h_2 + b_{(T_2, T_1)}(z_2, z_1) h_1\Big),
\]
for all $\tilde{h} = \begin{bmatrix} h_1 & h_2\end{bmatrix}^t \in \clh^2$ and $(z_1, z_2) \in \D^2$, where $D_{T^*} = (\mathbb{S}^{-1}_2(T, T^*))^{\frac{1}{2}}$ is the defect operator of the first kind.

\vspace{0.1in}
	
\noindent\textsf{Acknowledgement:} The first named author extends his gratitude to H{\aa}kan Hedenmalm for several valuable conversations on the topic of this paper. His research is supported by the Verg Foundation. The research of the second named author is partially supported by SG/IITH/F316/2022-23/SG-151, and DST/INSPIRE/04/2021/002101. The research of the third named author is supported in part by TARE (TAR/2022/000063) by SERB, Department of Science \& Technology (DST), Government of India.


\begin{thebibliography}{99}

\bibitem{Ag 1}
J. Agler, {\em On the representation of certain holomorphic functions defined on a polydisk}, in "Topics in Operator Theory: Ernst D. Hellinger Memorial Volume" (L. de Branges, I. Gohberg, and J. Rovnyak, Eds.), Operator Theory and Applications, Vol. 48, pp. 47–66, Birkh\"{a}user-Verlag, Basel, 1990.


\bibitem{Ag 2}
J. Agler, {\em Some interpolation theorems of Nevanlinna-Pick type}, preprint. 
	
\bibitem{Analytic Continuation}
P. Ahern and D. Clark, {\em Invariant subspaces and analytic continuation in several variables}, J. Math. Mech. 19 (1969/70), 963-969.


\bibitem{Athavale}
A. Athavale, {\em On joint hyponormality of operators}, Proc. Amer. Math. Soc. 103 (1988), 417–423.

	
\bibitem{BLTT}
J. Ball, W. Li, D. Timotin, and T. Trent, {\em A commutant lifting theorem on the polydisc}, Indiana Univ. Math. J.48(1999), 653-675.

\bibitem{Isometric dilation}
S. Barik, B. Das, K. Haria, and J. Sarkar, {\em Isometric dilations and von Neumann inequality for a class of tuples in the polydisc}, Trans. Amer. Math. Soc. 372 (2019), 1429-1450.

%\bibitem{B}
%A. Beurling, {\em On two problems concerning linear transformations in Hilbert space}, Acta Math. 81, 239-255 (1949).
	
\bibitem{Beurling quotient module}
M.	Bhattacharjee, B. Das, R. Debnath, and J. Sarkar, {\em Beurling quotient modules on the polydisc}, J. Funct. Anal. 282 (2022), Paper No. 109258, 18 pp.

\bibitem{Curto}
R. Curto and W. Lee, {\em Joint hyponormality of Toeplitz pairs}, Mem. Amer. Math. Soc. 150 (2001), no. 712, x+65 pp.


\bibitem{CVI}	
R. E. Curto and F. H. Vasilescu, {\em	Standard operator models in the polydisc}, Indiana Univ. Math. J. 42 (1993), 791-810.	

\bibitem{KD-S}
Deepak K. D. and J. Sarkar, {\em  Commutant lifting, interpolation, and perturbations on the polydisc}, arXiv:2301.10020
		

\bibitem{Vinnikov}
A. Grinshpan, D. Kaliuzhnyi-Verbovetskyi, V. Vinnikov, and H. Woerdeman, {\em Classes of tuples of commuting contractions satisfying the multivariable von Neumann inequality}, J. Funct. Anal. 256 (2009), 3035–3054. 

\bibitem{Mand}
V. Mandrekar, {\em The validity of Beurling theorems in polydiscs}, Proc. Amer. Math. Soc. 103 (1988), 145-148.


\bibitem{MV}
V. M\"{u}ller and F.-H. Vasilescu, {\em Standard models for some commuting multioperators}, Proc. Amer. Math. Soc. 117 (1993), 979-989.

\bibitem{NF}
B. Sz.-Nagy and C. Foia\c{s}, {\em Harmonic analysis of operators on Hilbert space}, North Holland, Amsterdam 1970.

\bibitem{Nik}
N. Nikolski, {\em Treatise on the shift operator}, Berlin: Springer, 1986.

\bibitem{Pop}
G. Popescu, {\em Berezin transforms on noncommutative polydomains}, Trans. Amer. Math. Soc. 368 (2016), 4357–4416.
	
\bibitem{Rudin}
W. Rudin, {\em Function theory in polydiscs}, W. A. Benjamin, Inc., New York-Amsterdam, (1969).

\bibitem{JayII}
J. Sarkar, {\em Wold decomposition for doubly commuting isometries}, Linear Algebra Appl. 445 (2014), 289-301.

\bibitem{Slo}
M. S{\l}oci\'{n}ski, {\em On Wold type decompositions of a pair of commuting isometries}, Ann. Pol. Math. 37 (1980), 255–262. 

\bibitem{Timotin}
D. Timotin, {\em Regular dilations and models for multicontractions}, Indiana Univ. Math. J. 47 (1998), 671–684.
				
\end{thebibliography}
\end{document}